\numberwithin{equation}{section}
\theoremstyle{plain}
\newtheorem{theorem}{Theorem}[section]
\newtheorem{proposition}[theorem]{Proposition}
\newtheorem{lemma}[theorem]{Lemma}
\newtheorem{corollary}[theorem]{Corollary}
\theoremstyle{definition}
\newtheorem{remark}[theorem]{Remark}
\newtheorem{example}[theorem]{Example}
\def\C{\mathbb{C}}
\def\R{\mathbb{R}}
\def\Z{\mathbb{Z}}
\def\Q{\mathbb{Q}}
\def\cF{\mathcal{F}}
\def\St{\mathrm{St}\,}
\def\Lk{\mathrm{Lk}\,}
\tikzstyle{v}=[circle, draw, solid, fill=black, inner sep=0pt, minimum width=4pt]
\DeclareMathOperator{\Cat}{Cat}
\DeclareMathOperator{\sgn}{sgn}
\newcommand{\CIII}{%
\begin{tikzpicture}%
\draw  (0,0) -- (0,1.8ex) -- (1.8ex,1.8ex) -- (1.8ex,0)-- (0,0);
\filldraw[fill=gray!50] (1.8ex,1.8ex) --(1.1ex,1.8ex) -- (1.8ex,1.1ex)--cycle;
\filldraw[fill=gray!50] (0,0) --(0.7ex,0) -- (0,0.7ex)--cycle;
\filldraw[fill=gray!50] (0,1.8ex) --(0.7ex,1.8ex) -- (0,1.1ex)--cycle;
\filldraw[fill=gray!50] (1.8ex,0) --(1.1ex,0) -- (1.8ex,0.7ex)--cycle;
\filldraw[fill=white] (1.9ex,0)--(2ex,0);
\end{tikzpicture}%
}
\begin{document}
\title{Graph invariants and Betti numbers of real toric manifolds}

\author[Boram Park]{Boram Park}
\address[B. Park]{Department of mathematics, Ajou University,
Suwon 16499, Republic of Korea}
\email{borampark@ajou.ac.kr}

\author[Hanchul Park]{Hanchul Park}
\address[H. Park]{Seoul Science High School, 63 Hyehwa-ro, Jongno-gu, Seoul 03066, Republic of Korea}
\email{hanchulp@sen.go.kr}

\author[Seonjeong Park]{Seonjeong Park}
\address[S. Park]{Applied Algebra and Optimization Research Center, Sungkyunkwan University, Suwon 16419, Republic of Korea}
\email{seonjeong1124@gmail.com}

\subjclass[2010]{Primary 55U10, 57N65; Secondary 05C30}

\keywords{graph cubeahedron, graph associahedron, real toric variety, Betti number, b-number}

\thanks{The first named author was supported by Basic Science Research Program through the National Research Foundation of Korea (NRF) funded by the Ministry of Science, ICT and Future Planning (NRF-2018R1C1B6003577)}

\date{\today}
\maketitle

\begin{abstract}
    For a graph $G$, a graph cubeahedron $\square_G$ and a graph associahedron $\triangle_G$ are simple convex polytopes which admit (real) toric manifolds. In this paper, we introduce a graph invariant, called the $b$-number, and we show that the $b$-numbers compute the Betti numbers of the real toric manifold $X^\R(\square_G)$ corresponding to a graph cubeahedron. The $b$-number is a counterpart of the notion of $a$-number, introduced by S.~Choi and the second named author, which computes the Betti numbers of the real toric manifold $X^\R(\triangle_G)$ corresponding to a graph associahedron. We also study various relationships between $a$-numbers and $b$-numbers from a toric topological view. Interestingly, for a forest $G$ and its line graph $L(G)$, the real toric manifolds $X^\R(\triangle_G)$ and $X^\R(\square_{L(G)})$ have the same Betti numbers.
\end{abstract}

\section{Introduction}\label{sec1}
    Throughout this paper, we focus on simple convex polytopes constructed from a graph, and we only consider a finite simple graph and  use $G$ or $H$ for a generic symbol  to denote a graph.

    For a graph $G$, the \emph{graph associahedron} of~$G$, denoted by $\triangle_G$, is a simple convex polytope obtained from a product of simplices by truncating the faces corresponding to proper connected induced subgraphs of each component of $G$; see Section~\ref{sec:simple_graph} for the precise construction. This polytope was first introduced by Carr and Devadoss in~\cite{CD2006} whose motivation was the work of De Concini and Procesi, wonderful compactifications of hyperplane arrangements~\cite{DeC-Pro1995}. Graph associahedra have also appeared in a broad range of subjects such as the moduli space of curves~\cite{Fe-Jen-Ran2016,Devadoss1999} and enumerative properties like $h$-vectors~\cite{PRW}.

    The $h$-vector of a simple polytope is a fundamental invariant of the polytope which encodes the number of faces of different dimensions, and it has been known that the $h$-vectors of graph associahedra give interesting integer sequences. For example, the $h$-vector of the graph associahedron $\triangle_{P_n}$ of a path $P_n$ is given by the Narayana numbers: $h_i(\triangle_{P_n})=N(n,i+1)=\frac{1}{n}{n\choose i+1}{n\choose i}$ for $i=0,\ldots,n-1$, see~\cite{PRW} for more {examples.}

    A \emph{graph cubeahedron}  is a simple convex polytope introduced in~\cite{DHV}, and it is deeply related to the moduli space of a bordered Riemann surface. The \emph{graph cubeahedron} of $G$, denoted by $\square_G$, is defined to be a polytope obtained from a cube by truncating the faces corresponding to  connected induced subgraphs. It was also shown in~\cite{DHV} that the graph cubeahedron $\square_{P_n}$ is combinatorially equivalent to the graph associahedron $\triangle_{P_{n+1}}$, and hence $h_i(\square_{P_n})$ is the Narayana number $N(n+1,i+1)$ for $i=0,\ldots,n$.
    But graph cubeahedra are not much known compared with graph associahedra.

    On the other hand, there is a beautiful connection between the $h$-vector of a simple polytope and the Betti numbers of a toric variety in toric geometry. {A compact non-singular toric variety (\emph{toric manifold} for short) is over a simple polytope $P$ if its quotient by the compact torus is homeomorphic to $P$ as a manifold with corners. If a toric manifold $X$ is over $P$, then} the cohomology groups of~$X$ vanish in odd degrees and the $2i$th Betti number of~$X$ is equal to $h_i(P)$, see~\cite{Jur80,Dan78}. {In fact, both graph associahedra and graph cubeahedra can admit toric manifolds over the polytopes, and hence the Betti numbers of toric manifolds associated with a path are Narayana numbers.}

    Unlike (complex) toric varieties, the real locus of a toric manifold, called a \emph{real toric manifold}, is much less known for its cohomology. In coefficient~$\Z_2$, the cohomology of a real toric manifold is  very similar to the complex case according to~\cite{Jur85}; for a toric manifold $X$ and its real locus $X^\R$, the $i$th $\Z_2$-Betti number of $X^\R$ is equal to the $2i$th Betti number of $X$, and hence it is also determined by the $h$-vector of $X/T$.\footnote{Given a topological space $X$, the $i$th \emph{Betti number} of $X$, denoted by $\beta^i(X)$, is the free rank of the singular cohomology group $H^i(X;\Z)$ and for a field $F$ the $i$th $F$-\emph{Betti number} of $X$, denoted by $\beta^i_F(X)$, is the dimension of $H^i(X;F)$ as a vector space over~$F$. Note that $\beta^i(X)=\beta^i_\Q(X)$.} But the Betti numbers of $X^\R$ are different from the $h$-vector of $X/T$ in general.
    For example, both $\C P^1\times \C P^1$ and $\C P^2\#\overline{\C P^2}$ are toric manifolds over the 2-cube $\square^2$ and have the same Betti numbers $\beta^0=1, \beta^2=2, \beta^4=1$ and $\beta^{odd}=0$. The real loci of $\C P^1\times \C P^1$ and $\C P^2\#\overline{\C P^2}$ are the $2$-dimensional torus $\mathbb{T}$ and the Klein bottle $\mathbb{K}$, respectively. Both $\mathbb{T}$ and $\mathbb{K}$ have the same $\Z_2$-Betti numbers: $\beta_{\Z_2}^0=1$, $\beta_{\Z_2}^1=2$, and $\beta_{\Z_2}^2=1$. But their Betti numbers are different; $\beta^1(\mathbb{T})=2$ and $\beta^1(\mathbb{K})=1$.

    Recently, the rational cohomology groups of real toric manifolds were studied in \cite{CP2,CP3,ST2012,Tre2012}, and the results make us look the rational cohomology groups of real toric manifolds combinatorially. However, it is {still} difficult to {compute} their Betti numbers {explicitly} by the results in general. For some interesting families of real toric manifolds, their rational cohomology have been studied in~\cite{CKP,CP,CPP2015,PP2017, CPP_B}.
    In this paper, we study the integral cohomology groups of real toric manifolds arising from a graph, in a combinatorial way.

    A simple convex polytope of dimension $n$ is called a \emph{Delzant polytope} if the $n$~primitive integral vectors (outward) normal to the facets meeting at each vertex form an integral basis of $\Z^n$. These Delzant polytopes play an important role in toric geometry; each Delzant polytope $P$ constructs a toric manifold $X(P)$. Under the canonical Delzant realizations, both graph associahedra and graph cubeahedra become Delzant polytopes.
{For a graph~$G$, we will denote by $X^\R(\triangle_G)$ and $X^\R(\square_G)$ the real loci of the toric manifolds $X(\triangle_G)$ and $X(\square_G)$, respectively.}

The Betti numbers of the real toric manifold $X^\R(\triangle_G)$ are computed in~\cite{CP},  by a {graph invariant} called the \emph{$a$-number}. We write $H \sqsubseteq G$ when $H$ is an induced subgraph of a graph $G$, and $H\sqsubset G$ when $H$ is a proper induced subgraph of $G$. We say that a graph $G$ is \emph{even} (respectively, \emph{odd}) if every connected component of $G$ has an even (respectively, odd) number of vertices.
The $a$-number $a(G)$ of a graph $G$ is an integer defined as\footnote{The numbers $a(G)$ and $b(G)$ are originally defined in \cite{CP} in slightly different form. For a connected graph~$G$, $a(G)$ and $b(G)$ here  correspond to $sa(G)$ and $(-1)^{|V(G)|}b(G)$ in \cite{CP} respectively. See Lemma~\ref{lem:multiplicative}.}
    \[  a(G)=\begin{cases}
      1 &\text{if }V(G)=\emptyset,\\
      0 &\text{if $G$ is not even,}\\
      -\sum_{H: H\sqsubset G}  a(H) &\text{otherwise},\\
    \end{cases}\]
{and the $i$th Betti number  of $X^\R(\triangle_G)$ is given as follows:}
    \begin{theorem}[\cite{CP}]\label{thm:CP:main}
        Let $G$ be a graph. For any integer $i\ge 0$, the $i$th Betti number  of $X^\R(\triangle_G)$ is
        $$
        \beta^i(X^\R(\triangle_G))= \sum_{H \sqsubseteq G\atop |V(H)|=2i}\left|a(H)\right|.
        $$
    \end{theorem}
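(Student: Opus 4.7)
The plan is to apply the general formula of Trevisan~\cite{Tre2012} (see also~\cite{ST2012, CP2}) which expresses the rational Betti numbers of a real toric manifold $X^\R(P)$ over a Delzant polytope $P$ in terms of its mod-$2$ characteristic matrix $\Lambda_2$:
$$\beta^i(X^\R(P))=\sum_{\omega\in\mathrm{Row}(\Lambda_2)}\tilde{\beta}^{i-1}(K_\omega^P;\Q),$$
where $K_\omega^P$ denotes the full subcomplex of the dual simplicial complex $\partial P^{*}$ supported on the facets of $P$ in the support of $\omega$. The goal is to specialize this formula to $P=\triangle_G$.

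For the canonical Delzant realization of $\triangle_G$, facets are indexed by proper connected induced subgraphs (``tubes'') of each component of $G$, and the characteristic vector attached to a tube is determined (up to an easily understood correction) by the indicator of its vertex set. A first step is to verify that the nonzero elements of $\mathrm{Row}(\Lambda_2)$ are in bijection with induced subgraphs $H\sqsubseteq G$: each such element is the indicator of some vertex subset $V(H)\subseteq V(G)$. Writing $K_H$ for the corresponding subcomplex of $\partial(\triangle_G)^{*}$, the Trevisan formula becomes
$$\beta^i(X^\R(\triangle_G))=\sum_{H\sqsubseteq G}\tilde{\beta}^{i-1}(K_H;\Q),$$
and the task reduces to proving that $\tilde{\beta}^{i-1}(K_H;\Q)=|a(H)|$ if $|V(H)|=2i$, and vanishes otherwise.

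Next I would analyze the combinatorial structure of $K_H$ as the subcomplex of tubings whose tubes are contained in $V(H)$, with a parity condition coming from the row-space identification. The plan is to prove by induction on $|V(H)|$ that $K_H$ is rationally acyclic unless $H$ is even, and that for even $H$ the reduced cohomology of $K_H$ is concentrated in the top degree $|V(H)|/2-1$. A natural inductive tool is to fix a vertex $v\in V(H)$ and decompose $K_H$ via the star and link of the singleton tube $\{v\}$; the resulting Mayer--Vietoris or cofibration sequence relates the reduced Euler characteristic of $K_H$ to those of subcomplexes for induced subgraphs $H'\sqsubset H$, reproducing exactly the recursion $\sum_{H'\sqsubseteq H}a(H')=0$ that defines $a$. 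Once concentration of cohomology is established, this forces $\tilde{\beta}^{|V(H)|/2-1}(K_H)=|a(H)|$.

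The hard part will be the concentration-of-cohomology step: showing that $K_H$ has reduced rational cohomology in at most one degree. Typical strategies include exhibiting a shelling of $K_H$, constructing a discrete Morse function whose critical cells are concentrated in one dimension, or identifying $K_H$ up to rational homotopy equivalence with a wedge of spheres by an explicit collapse. Granted this, the Euler characteristic recursion collapses to the $a$-number recursion, and grouping the contributions of induced subgraphs $H\sqsubseteq G$ with $|V(H)|=2i$ yields the desired formula.
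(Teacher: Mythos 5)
Your outline does track the actual proof in \cite{CP} (which this paper only cites): apply the Suciu--Trevisan formula (Theorem~\ref{formula}), identify each relevant subcomplex of $\partial(\triangle_G)^\ast$ with a complex attached to an induced subgraph $H$, show its reduced cohomology is concentrated in degree $|V(H)|/2-1$ with rank $|a(H)|$, and sum. But the proposal has a genuine gap at exactly the load-bearing step. The concentration of cohomology in a single degree is not something you can defer to ``typical strategies include exhibiting a shelling, a discrete Morse function, or an explicit collapse'': in \cite{CP} this is precisely the content of the shellability of the poset $\mathcal{P}_H^{\mathrm{even}}$ (restated here as Theorem~\ref{prop:simple-shellable}), combined with the identification of the dual subcomplex with (a subdivision of) the order complex of $\mathcal{P}_H^{\mathrm{even}}\setminus\{\hat0,\hat1\}$ (Lemma~4.7 of \cite{CP}) and an Alexander duality step inside the sphere $\partial\triangle_{H}$. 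Your Mayer--Vietoris/star-link induction only controls the reduced Euler characteristic, and an Euler characteristic computation cannot separate Betti numbers in different degrees; moreover the absolute value $|a(H)|$ in the statement is meaningless without single-degree concentration (together with the sign statement of Corollary~\ref{cor:absign}(1)). So as written, the ``hard part'' you name is the theorem, and it is left unproved.

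Two further inaccuracies should be repaired even granting that step. First, for a connected $G$ on $n$ vertices the row space of $\Lambda_2$ has $2^{n-1}$ elements, so it is \emph{not} in bijection with all induced subgraphs $H\sqsubseteq G$; working out the normal vectors, the nonzero row-space elements correspond to the vertex subsets meeting each component of $G$ in an even number of vertices (for disconnected $G$, componentwise), which is consistent with the final formula only because $a(H)=0$ otherwise. Second, the support of $\omega_S$ consists of the facets $F_I$ whose tube $I$ has \emph{odd intersection} with the chosen vertex set, not of tubes contained in it; the passage to the complex built only from tubes of the induced subgraph $G[S]$ is a separate homotopy-equivalence lemma (the analogue of Lemma~\ref{lem:reduced:subgraph} in this paper), which your sketch silently assumes as a definition.
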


    Although graph cubeahedra are very similar to graph associahedra in their constructions, real toric manifolds corresponding to those ploytopes $\triangle_G$ and $\square_G$ are quite different since their normal fans are not isomorphic in general.
    So, it is natural to ask if analogous theories hold for real toric manifolds corresponding to graph cubeahedra and how they can be stated.
    In this paper, we first focus on the Betti numbers of the real toric manifolds $X^\R(\square_G)$.
    Let us define the \emph{$b$-number} of a graph $G$ as follows.
    \[  b(G)=\begin{cases}
      1 &\text{if }V(G)=\emptyset,\\
      0 &\text{if $G$ is not odd,}\\
      -\sum_{H: H\sqsubset G}  b(H) &\text{otherwise}.\\
    \end{cases}\]
We also obtain the $b$-number-analogue of Theorem~\ref{thm:CP:main} for $X^\R(\square_G)$ as follows.
\begin{theorem}\label{thm:main:typeI}
        Let $G$ be a graph. For any integer $i\ge 0$, the $i$th Betti number of $X^\R(\square_G)$ is
    $$
    \beta^i(X^\R(\square_G))= \sum_{H \sqsubseteq G\atop |V(H)|+\kappa(H)=2i}\left|b(H)\right|,
    $$
where $\kappa(H)$ is the number of connected components of $H$.
\end{theorem}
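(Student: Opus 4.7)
The plan is to adapt the strategy behind Theorem~\ref{thm:CP:main}, applying the general formula of~\cite{CP2,CP3,ST2012,Tre2012} for the rational Betti numbers of a real toric manifold. For a Delzant polytope $P$ with mod-$2$ characteristic matrix $\Lambda$, this formula reads
$$
\beta^i(X^\R(P)) = \sum_{\omega} \widetilde{\beta}^{i-1}(K_\omega; \Q),
$$
where $\omega$ ranges over the row space of $\Lambda$ over $\Z_2$ and $K_\omega$ is the full subcomplex of the dual simplicial complex of $P$ on the vertices corresponding to facets $F$ with $\langle \omega,\lambda_F\rangle = 1$. The goal is to make this sum explicit for $P = \square_G$ and identify it with the right-hand side of the theorem.

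First I would work out the Delzant data of $\square_G$. Its facets fall into two families: the cube facets, naturally indexed by $V(G)$, and the truncation facets, indexed by the proper connected induced subgraphs $H \sqsubset G$. Under the canonical Delzant realization, the cube-facet characteristic vectors may be chosen to be the standard basis $\{e_v\}_{v\in V(G)}$ of $\Z^{V(G)}$, and each truncation vector is $\lambda_H = \sum_{v\in V(H)}e_v$. Consequently the mod-$2$ row space is naturally parametrized by subsets $S \subseteq V(G)$: writing $\omega_S$ for the corresponding row vector, one has $\langle \omega_S, \lambda_H\rangle \equiv |S\cap V(H)| \pmod 2$ and $\langle \omega_S,\lambda_v\rangle = 1$ iff $v \in S$. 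A routine argument shows that every facet contributing to $K_{\omega_S}$ is supported in $S$, so $K_{\omega_S}$ depends only on the induced subgraph $G[S]$; thus the whole computation reduces to evaluating $\widetilde{\beta}^{\ast}(K_{\omega_{V(G)}};\Q)$ for an arbitrary graph $G$.

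Next I would compute this reduced cohomology by induction on $|V(G)|$ along a deletion/contraction-style recursion, mirroring the argument in~\cite{CP}. Picking a vertex $v$ and splitting the simplices of $K_{\omega_{V(G)}}$ according to how they interact with $v$ (using the cube facet for $v$, using a truncation tube containing $v$, or avoiding $v$ altogether) should produce a Mayer--Vietoris sequence expressing the answer in terms of the analogous complexes for proper induced subgraphs. The parity condition ``$|V(G)\cap V(H)|$ odd'' acts component by component on $H$ and should force the reduced cohomology to vanish unless $G$ itself is odd. When $G$ is odd, the induction should yield
$$
\widetilde{\beta}^{(|V(G)|+\kappa(G))/2-1}(K_{\omega_{V(G)}};\Q) = |b(G)|,
$$
with all other reduced Betti numbers vanishing; the shift by $\kappa(G)$ encodes the fact that each connected component of $G$ contributes one additional cube-facet vertex to the top-dimensional spherical class. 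Substituting back into the formula of~\cite{CP2,CP3,ST2012,Tre2012} and reindexing by $H=G[S]$ yields the stated formula.

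The main obstacle is the recursion in the last step. While the overall shape of the argument is dictated by the analogous analysis for $\triangle_G$, the cube facets and the parity condition alter both the vertex set and the simplicial structure of $K_{\omega_{V(G)}}$, so the deletion/contraction move, the identification of the terms on each side of the Mayer--Vietoris sequence, and the bookkeeping of the dimension shift (responsible for the $\kappa(H)$-term) all have to be reworked. Once the recursion is established, matching it against the defining recurrence of $b(G)$ and summing over induced subgraphs is essentially automatic.
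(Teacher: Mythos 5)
Your setup is the same as the paper's: you invoke the Suciu--Trevisan formula (Theorem~\ref{formula}), read off the mod-$2$ characteristic data of $\square_G$, and reduce the sum over $S\subseteq V(G)$ to complexes depending only on $G[S]$. But two of your claims hide real work, and the second one is the heart of the theorem. First, it is not true that ``every facet contributing to $K_{\omega_S}$ is supported in $S$'': the vertices of $K_{\omega_S}$ are the facets $F_I$ with $I\in\mathcal{I}_G$ and $|I\cap S|$ odd, and such $I$ need not lie inside $S$ (e.g.\ $S=\{2,3\}$ in Figure~\ref{ex:mainex2} gives vertices $12,124,134,\dots$). So $K_{\omega_S}$ does not literally equal the complex attached to $G[S]$; it is only homotopy equivalent to it, and establishing this is a genuine step (the paper's Lemma~\ref{lem:reduced:subgraph}, proved by repeatedly deleting stars of vertices whose links are cones, via Lemma~\ref{lem:st}).

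Second, and more seriously, the core computation --- that for odd $H$ the reduced cohomology of $K^{\mathrm{odd}}_{H}$ is concentrated in degree $\frac{|V(H)|+\kappa(H)}{2}-1$ with rank $|b(H)|$, and vanishes entirely when $H$ is not odd --- is only asserted, not proved: your deletion/contraction Mayer--Vietoris recursion is never specified, and you yourself flag it as the unresolved obstacle. Note also that a Mayer--Vietoris induction by itself would at best give an Euler-characteristic-type identity matching the recursion defining $b$; to obtain the \emph{absolute value} $|b(H)|$ as a single Betti number you need to know the cohomology sits in one degree. The paper gets exactly this by a different mechanism: a join decomposition over connected components, contractibility of $K^{\mathrm{odd}}_H$ for connected even $H$ (Lemma~\ref{lem:even:contractible}), Alexander duality on the dual sphere of $\partial\square_{G[S]}$ to pass to $K^{\mathrm{even}}_{G[S]}$, identification of the latter with the order complex of $\mathcal{P}_{G[S]}^{\mathrm{even}}\setminus\{\hat{0},\hat{1}\}$, and shellability of that poset (Theorem~\ref{prop:simple-shellable}) together with $b(H)=\mu(\mathcal{P}_H^{\mathrm{even}})$ from \eqref{eqn:bwrittenbya}. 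Without an argument replacing this chain (or an actual worked-out recursion with a concentration statement), your proposal does not yet constitute a proof.
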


Recently, Manneville and Pilaud showed in~\cite{MP2017} that for two connected graphs $G$ and $G'$, there exists a combinatorial equivalence between a graph associahedron $\triangle_G$ and a graph cubeahedron $\square_{G'}$ if and only if $G$ is a tree with at most one vertex whose degree is greater than $2$ and $G'=L(G)$, where $L(G)$ is the line graph of $G$. Hence $h_i(\triangle_G)=h_i(\square_{L(G)})$ for $i=1,\ldots,|V(G)|-1,$ which implies that the toric manifolds $X(\triangle_G)$ and $X(\square_{L(G)})$ have the same Betti numbers. But $X(\triangle_G)$ and $X(\square_{L(G)})$ are not isomorphic as toric varieties even if $G$ is the path $P_4$, see Section~\ref{sec:polytope}.

For a tree $G$, even though the polytopes $\triangle_G$ and $\square_{L(G)}$ are not combinatorially equivalent in general, we get an interesting relationship between the real toric manifolds $X^\R(\triangle_G)$ and $X^\R(\square_{L(G)})$ as follows.
\begin{theorem}\label{prop:line:graph}
For a forest $G$, the real toric manifolds $X^\R(\triangle_G)$ and $X^\R(\square_{L(G)})$ have the same Betti numbers, that is, for any integer $i \ge 0$,
$$\beta^i(X^\R(\triangle_{G}))=  \beta^i(X^\R(\square_{L(G)})).$$
\end{theorem}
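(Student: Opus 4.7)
The plan is to use Theorems~\ref{thm:CP:main} and~\ref{thm:main:typeI} to reduce the equality of Betti numbers to a single identity on induced subgraphs of $G$, and then to establish that identity by matching the defining recursions of the $a$- and $b$-numbers.

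An induced subgraph of $L(G)$ is the same as a subset $F\subseteq E(G)$; let $W(F)\subseteq V(G)$ be the set of vertices of $G$ incident to at least one edge of~$F$. Since $G$ is a forest, the components of the edge-subgraph $(W(F),F)$ are subtrees $T_1,\dots,T_s$ corresponding bijectively to the components of $L(G)[F]$, so
$$|V(L(G)[F])|+\kappa(L(G)[F])=\sum_{j=1}^{s}(|E(T_j)|+1)=|W(F)|.$$
Grouping the sum in Theorem~\ref{thm:main:typeI} by $S:=W(F)$ and using $L(G)[F]=L(G[S])[F]$ whenever $F\subseteq E(G[S])$, a comparison with Theorem~\ref{thm:CP:main} reduces Theorem~\ref{prop:line:graph} to the graph identity
$$|a(H)|=\sum_{F\text{ an edge cover of }H}\bigl|b(L(H)[F])\bigr|\qquad(\ast)$$
for every forest~$H$.

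I would attack $(\ast)$ via its signed counterpart $a(H)=c(H)$, where $c(H):=\sum_F b(L(H)[F])$, proved by induction on~$|V(H)|$. The multiplicativity of $a$ and $b$ (Lemma~\ref{lem:multiplicative}), together with the observation that an edge cover of $H_1\sqcup H_2$ splits as a pair of edge covers, makes $c$ multiplicative on the components of~$H$, so it suffices to treat $H$ a tree. For a tree $H$ an edge cover $F$ corresponds bijectively to a partition $V(H)=V_1\sqcup\cdots\sqcup V_s$ in which each $H[V_j]$ is a connected subtree of order $\ge 2$, via $F=\bigsqcup_j E(H[V_j])$, and $b(L(H)[F])=\prod_j b(L(H[V_j]))$ is nonzero only when every $|V_j|$ is even; hence $c(H)=0=a(H)$ whenever $|V(H)|$ is odd. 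For $|V(H)|$ even the key identity is the reindexing
$$\sum_{H'\sqsubseteq H}c(H')=\sum_{F\subseteq E(H)}b(L(H)[F])=\sum_{H''\sqsubseteq L(H)}b(H''),$$
which vanishes because $L(H)$ is then a connected odd graph with $V(L(H))\neq\emptyset$, so the defining recursion of~$b$ applies. Thus $c$ satisfies the same characterizing recursion as~$a$, and induction yields $c=a$ on all forests.

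The main obstacle will be passing from the signed identity $a(H)=c(H)$ to the unsigned identity~$(\ast)$: one must show that every nonzero summand on the right-hand side shares a common sign. I would prove by parallel induction that for any connected odd graph~$K$, $b(K)$ has sign $(-1)^{(|V(K)|+1)/2}$, and that for any even graph~$H$, $a(H)$ has sign $(-1)^{|V(H)|/2}$. Multiplying the $b$-signs over the parts $V_j$ gives $\mathrm{sgn}\,b(L(H)[F])=(-1)^{\sum_j|V_j|/2}=(-1)^{|V(H)|/2}=\mathrm{sgn}\,a(H)$, so all nonzero contributions on the right of~$(\ast)$ share the sign of~$a(H)$; the triangle inequality becomes an equality, $(\ast)$ follows, and Theorem~\ref{prop:line:graph} is proved.
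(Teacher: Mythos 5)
Your proposal is correct and is essentially the paper's own argument: your identity $(\ast)$, with edge covers of $H$ playing the role of the spanning subgraphs without isolated vertices $\mathcal{S}(H)$, is precisely Proposition~\ref{prop:line:graph:a-b}, your reindexing of $\sum_{F\subseteq E(H)}b(L(H)[F])$ as $\sum_{H''\sqsubseteq L(H)}b(H'')$ combined with the $b$-recursion for the connected odd graph $L(H)$ is exactly the induction in its proof, and the grouping by $S=W(F)$ is Lemma~\ref{for:a-b:tree:0}. The only deviation is that you propose to re-derive the sign facts by a ``parallel induction'' (which is delicate, since the summands in the defining recursion of $b$ have mixed signs); the paper instead simply invokes Corollary~\ref{cor:absign}, which you could cite directly to pass from the signed identity to the unsigned one.
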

Note that the relation above does not hold in general, see Section~\ref{sec:main_results}.
Meanwhile obtaining the theorem above, we also discuss additional various properties related to $a(G)$ and $b(G)$ such as M\"obius inversion and Euler characteristics of $X^\R(\triangle_G)$ and $X^\R(\square_G)$.

This paper is organized as follows. In Section~\ref{sec:simple_graph}, we summarize the results of~\cite{CP} about real toric manifolds over graph associahedra.
In Section~\ref{sec:polytope}, we recall the definition of graph cubeahedra~$\square_G$, introduce the toric manifold $X(\square_G)$, and briefly review the relationship between $X(\triangle_G)$ and $X(\square_G)$.
Section~\ref{sec:main_results} states the main results, that is, we describe the Betti numbers of the real toric manifolds $X^\R(\square_G)$ in terms of {the} $b$-numbers of graphs {(Theorem~\ref{thm:main:typeI})} and then study various relationships between the real toric manifolds $X^\R(\triangle_G)$ and $X^\R(\square_{L(G)})$ including Theorem~\ref{prop:line:graph}.
Section~\ref{sec:proof} is devoted to the proof of  Theorem~\ref{thm:main:typeI}.
Section~\ref{sec:example} {provides some interesting integer sequences arising from the Betti numbers of the real toric manifolds associated with some special graphs. In Section~\ref{sec:remark}, we give some} remarks on a graph associahedron of type $B$.

\section{Real toric manifolds over graph associahedra}\label{sec:simple_graph}

In this section, we briefly summarize the result of \cite{CP}, which studies real toric manifolds over graph associahedra. {Recall that a real toric manifold is the real locus of a toric manifold and we refer the reader to~\cite{Ful1993} for more details of toric varieties.}

For a graph $G$ we denote by $\kappa(G)$ the number of connected components of a graph $G$, where a \emph{connected component} (or a \emph{component}) means a maximally connected subgraph of $G$.  The \emph{null graph} is the graph whose vertex set is empty, and it has no connected component and so $\kappa(G)$ for the null graph $G$ is defined to be $0$ by convention. We say that a graph $G$ is \emph{even} (respectively, \emph{odd}) if every connected component of $G$ has an even (respectively, odd) number of vertices. {Note that the null graph is a unique graph that is both even and odd.
A subgraph $H$ of $G$ is said to be \emph{induced} if $H$} includes the edges between every pair of vertices in $H$ if such edges exist in $G$. For $I\subset V(G)$, the subgraph induced by $I$ is denoted by $G[I]$,  {and for simplicity, we let $\mathcal{I}_G$ be the set of all $I\subset V(G)$ such that $G[I]$ is connected.}
Throughout this paper,  we denote by $H\sqsubseteq G$ {if $H$ is either an induced subgraph of $G$ or a null graph, and when $G$ is not the null graph, we denote by $H\sqsubset G$ if $H$ is either a proper induced subgraph of $G$ or a null graph.}
We denote a complete graph, a path, a cycle, and a star with $n$ vertices by $K_n$, $P_n$, $C_n$, and $K_{1,n-1}$, respectively.

\bigskip

\noindent\textbf{Construction of a graph associahedron.}
Let $G$ be a connected graph with the vertex set $[n]:=\{1,\ldots,n\}$.
Let us consider the standard simplex $\Delta^{n-1}$ whose facets are labeled by $1,\ldots,n$. Then there is a one-to-one correspondence between the faces of $\Delta^{n-1}$ and the subsets of $[n]$. Hence each face of $\Delta^{n-1}$ can be labeled by a subset $I\subset[n]$.
Then the graph associahedron, denoted by $\triangle_G$, is obtained from $\Delta^{n-1}$ by truncating
the faces labeled by $I$ for each proper connected subgraph $G[I]$ in increasing order of dimensions. If $G$ has the connected components $G_1,\ldots,G_\kappa$, then we define $\triangle_G=\triangle_{G_1}\times\cdots\times\triangle_{G_\kappa}$.

\bigskip

\begin{lemma}[\cite{CPP2015}]\label{lem:truncation and delzant}
    Let $P$ be a Delzant polytope and $F$ a proper face of $P$. Then there is {a} canonical truncation of $P$ along $F$ such that the result is a Delzant polytope, say $\mathrm{Cut}_F(P)$, satisfying that the toric manifold $X(\mathrm{Cut}_F(P))$ is the blow-up of $X(P)$ along the submanifold $X(F)\subset X(P)$.
\end{lemma}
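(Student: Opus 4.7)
The plan is to work on the side of the normal fan. Writing $\Sigma_P$ for the normal fan of $P$, the face $F$ of codimension $k$ corresponds to a simplicial cone $\sigma_F \in \Sigma_P$ generated by the primitive outward normals $v_{i_1}, \ldots, v_{i_k}$ of the facets of $P$ containing $F$. The Delzant hypothesis forces $v_{i_1}, \ldots, v_{i_k}$ to form part of a $\Z$-basis of $\Z^n$, so $\sigma_F$ is a smooth cone. This is the setup in which the truncation and the blow-up can both be described uniformly.

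First I would define the truncation explicitly: set $v_0 := v_{i_1} + \cdots + v_{i_k}$ (primitive, since it is a sum of distinct basis vectors) and let
\[\mathrm{Cut}_F(P) := P \cap \{x \in \R^n : \langle v_0, x \rangle \le c\}\]
for a rational $c$ chosen slightly smaller than $\max_{x \in P}\langle v_0, x \rangle$, so that the new hyperplane only meets facets of $P$ containing $F$. This introduces a single new facet $F_0$ with primitive outward normal $v_0$. To verify that $\mathrm{Cut}_F(P)$ is Delzant, note that any vertex of $\mathrm{Cut}_F(P)$ that is also a vertex of $P$ retains its original integral basis of normals. Any new vertex lies on $F_0$ and arises from an old vertex $p \in F$ at which $v_{i_1},\ldots,v_{i_k},w_1,\ldots,w_{n-k}$ form a $\Z$-basis; the new vertex has normal tuple $v_0,v_{i_1},\ldots,\widehat{v_{i_j}},\ldots,v_{i_k},w_1,\ldots,w_{n-k}$ for some index $j$. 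Since $v_0 - \sum_{l\ne j}v_{i_l} = v_{i_j}$, the change-of-basis matrix between the two tuples is unimodular, so the new collection is again a $\Z$-basis.

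Finally, the normal fan $\Sigma_{\mathrm{Cut}_F(P)}$ is obtained from $\Sigma_P$ by a star subdivision along $\sigma_F$ with the newly inserted ray $\R_{\ge 0}v_0$: each maximal cone of $\Sigma_P$ containing $\sigma_F$ is replaced by the cones spanned by $v_0$ together with all but one of the generators of $\sigma_F$ and all the other original generators. By the standard toric description of blow-ups along smooth torus-invariant subvarieties (see~\cite{Ful1993}), this star subdivision realizes precisely the blow-up of $X(P)$ along the torus-invariant submanifold $X(F)$, giving the stated identification. The main obstacle is the combinatorial bookkeeping: choosing $c$ small enough that the cutting hyperplane meets exactly the facets through $F$, and keeping track of the new vertices so that the unimodularity computation above applies uniformly to all of them; once this combinatorial picture is in place, the rest reduces to standard toric machinery.
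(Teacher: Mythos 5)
The paper does not actually prove this lemma: it is quoted from \cite{CPP2015}, so there is no internal argument to compare against. Your proposal is the standard fan-theoretic proof of that cited result, and it is essentially correct: the face $F$ corresponds to a smooth cone $\sigma_F$ of the normal fan, the truncation inserts the ray through $v_0=v_{i_1}+\cdots+v_{i_k}$ (exactly the normal-vector rule the paper records right after the lemma), the unimodularity check at the new vertices is right, and the identification of the star subdivision at $\sigma_F$ with the blow-up of $X(P)$ along the invariant submanifold $X(F)$ is the standard toric fact from \cite{Ful1993}. One small inaccuracy: your genericity condition ``the new hyperplane only meets facets of $P$ containing $F$'' cannot be achieved when $\dim F>0$, since the cutting hyperplane necessarily meets every facet that intersects $F$ (e.g.\ truncating an edge of a $3$-cube, the new facet touches the two facets through the endpoints of the edge as well). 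The condition you actually need, and implicitly use in your vertex analysis (where the normals $w_1,\ldots,w_{n-k}$ come precisely from facets meeting $F$ at $p$ without containing $F$), is that $c$ lies strictly between $\max_{x\in P}\langle v_0,x\rangle$, which is attained exactly on $F$, and the largest value of $\langle v_0,\cdot\rangle$ at the vertices of $P$ not in $F$; then the vertices cut off are exactly those of $F$ and the rest of your argument goes through unchanged.
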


{The lemma above assures that $\triangle_G$ is a well-defined Delzant polytope, and the toric manifold $X(\triangle_G)$ is an iterated blow-up of $X(\Delta^n)=\C P^n$. Note that if a face $F$ of $P$ is the intersection of the facets $F_1, \cdots, F_k$, then the normal vector of the new face of $\mathrm{Cut}_F(P)$ arising from the truncation is the sum of the normal vectors of $F_1,\ldots,F_k$.}

\begin{remark}
    Note that, for disconnected graphs, the definitions of graph associahedra in~\cite{CD2006}~and~\cite{P05}  do not coincide, and we follow the definition in \cite{P05}; for a given graph $G$, the polytope constructed in \cite{CD2006} is combinatorially equivalent to $\triangle_{G}\times\Delta^{\kappa(G)-1}$.
\end{remark}

\begin{example} For a path $P_3$, the graph associahedron $\triangle_{P_3}$ is a pentagon, {which is obtained from a triangle by truncating two vertices, see Figure~\ref{fig:ex graphassociahedron}. Since $X(\Delta^2)$ is the complex projective space $\C P^2$, the toric manifold $X(\triangle_{P_3})$ corresponds to blowing up of $\C P^2$ at two fixed points of the torus action, and hence $X(\triangle_{P_3})=\C P^2\#\overline{\C P^2}\#\overline{\C P^2}.$ Therefore, the real toric manifold $X^\R(\triangle_{P_3})$ is $\R P^2\#\R P^2 \# \R P^2$.}
    \begin{figure}[h]
    \begin{center}
     \begin{subfigure}[b]{.19\textwidth}       \begin{tikzpicture}[scale=1]
            \fill (0,0) circle(2pt);
            \fill (1,0) circle(2pt);    \fill (2,0) circle(2pt);
            \draw (0,0)--(2,0);
            \draw (0,-0.3) node{$1$};
            \draw (1,-0.3) node{$2$};
            \draw (2,-0.3) node{$3$};
        \end{tikzpicture}
                \caption*{$P_3$}
    \end{subfigure}
        \begin{subfigure}[b]{.25\textwidth}
        \centering
        \begin{tikzpicture}[scale=.6]
    		\filldraw[fill=gray!10] (0,0)--(2,0)--(0,2)--cycle;
    		\draw (-0.2,-0.2) node{{\tiny $12$}};
    		\draw (-0.2,2.1) node{{\tiny $13$}};
    		\draw (2.1,-0.2) node{{\tiny $23$}};
    		\draw (1,-0.3) node{{\tiny $2$}};
    		\draw (1.1,1.1) node{{\tiny $3$}};
    		\draw (-0.2,1) node{{\tiny $1$}};
    	\end{tikzpicture}
            \caption*{$\Delta^2$}
        \end{subfigure}
        \begin{subfigure}[b]{.25\textwidth}
        \centering
        \begin{tikzpicture}[scale=.6]
            \filldraw[fill=gray!10] (0,0.5)--(0.5,0)--(1.5,0)--(1.5,0.5)--(0,2)--cycle;
            \draw (0.1,0.1) node{{\tiny $12$}};
    		\draw (1.7,0.25) node{{\tiny $23$}};
    		\draw (1,-0.3) node{{\tiny $2$}};
    		\draw (1.1,1.1) node{{\tiny $3$}};
    		\draw (-0.2,1) node{{\tiny $1$}};
        \end{tikzpicture}
            \caption*{$\triangle_{P_3}$}
        \end{subfigure}
        \end{center}
        \caption{A graph associahedron $\triangle_{P_3}$.}\label{fig:ex graphassociahedron}
        \end{figure}
\end{example}

Note that the graph associahedra corresponding to a path $P_n$, a cycle $C_n$, a complete graph $K_n$, and a star  $K_{1,n-1}$, are called an associahedron, a cyclohedron, a permutohedron, and a stellohedron, respectively, and they are well-studied in various contexts such as \cite{P05} and \cite{PRW}.

The face structure of $\triangle_G$ can be described from the structure of the graph $G$.

\begin{proposition}[\cite{CD2006}]
    For a connected graph $G$,
    there is a one-to-one correspondence between the facets of $\triangle_G$ and the proper connected induced subgraphs of $G$. We denote by $F_I$ the facet corresponding to a proper connected induced subgraph $G[I]$. Furthermore, the facets $F_{I_1},\ldots,F_{I_k}$ intersect if and only if $I_i\subseteq I_j$, $I_j\subseteq I_i$, or {$I_i\cup I_j\not\in\mathcal{I}_G$} for all $1 \le i < j \le k$.
\end{proposition}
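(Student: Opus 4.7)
The plan is to argue inductively on the sequence of truncations that build $\triangle_G$ from $\Delta^{n-1}$, maintaining a coherent labeling of facets throughout the process.

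First, I would establish the facet--tube bijection. The $n$ facets of $\Delta^{n-1}$ are labeled $F_{\{1\}},\ldots,F_{\{n\}}$, and for each $I\subseteq[n]$ one has the face $F_I^{(0)}:=\bigcap_{i\in I}F_{\{i\}}$ of dimension $n-1-|I|$. The construction truncates $F_I^{(0)}$, in increasing order of $|I|$, exactly when $G[I]$ is a proper connected induced subgraph with $|I|\geq 2$. Performing the cuts in increasing order of dimension guarantees that at the moment $F_I^{(0)}$ is cut it is still a face of the current polytope with the same facet-incidences as in $\Delta^{n-1}$ (no smaller-dimensional subface of it has been perturbed). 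Each such step adds a single new facet, labeled $F_I$, and together with the original $F_{\{i\}}$ this produces a bijection with all proper connected induced subgraphs of $G$.

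For the intersection criterion I would use the standard behaviour of truncation: cutting $P$ along a face $F$ yields a new facet that meets precisely the old facets containing $F$, and it preserves the intersection of two existing facets $F_J,F_K$ iff $F_J\cap F_K\not\subseteq F$. Applying this inductively, I would track when a pair $F_{I_1},F_{I_2}$ loses its intersection. The simplex intersection $F_{I_1}^{(0)}\cap F_{I_2}^{(0)}=F_{I_1\cup I_2}^{(0)}$ is destroyed precisely by the cut along $F_{I_1\cup I_2}^{(0)}$, which happens iff $G[I_1\cup I_2]$ is connected and $I_1\cup I_2\subsetneq [n]$; an analogous analysis handles intersections involving a newly created facet $F_I$, whose incidences are controlled by which old facets contained $F_I^{(0)}$, namely $F_J$ with $J\subsetneq I$. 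Assembling these observations yields the pairwise criterion: $F_{I_1}\cap F_{I_2}\neq\emptyset$ in $\triangle_G$ iff $I_1\subseteq I_2$, $I_2\subseteq I_1$, or $I_1\cup I_2\notin\mathcal{I}_G$.

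Finally, for the $k$-fold statement I would show by induction on $k$ that pairwise compatibility of $\{I_1,\ldots,I_k\}$ implies $\bigcap_j F_{I_j}\neq\emptyset$. A compatible family decomposes into a nested chain of tubes together with a collection of tubes whose pairwise unions are disconnected in $G$; one can then exhibit an explicit intersection point by extending the family to a maximal compatible collection, which corresponds to a vertex of $\triangle_G$. The main obstacle is the bookkeeping in the truncation argument: one has to verify the invariant ``facets meet iff they are pairwise compatible among the tubes already cut'' is preserved after each new truncation, which relies crucially on the ordering of the cuts by increasing dimension so that when $F_I^{(0)}$ is cut, its containment relations with other current facets match those in the original simplex.
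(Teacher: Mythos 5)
The paper does not prove this proposition at all: it is quoted from Carr--Devadoss \cite{CD2006} (and the flagness statement needed for the ``pairwise implies $k$-wise'' part is quoted separately from \cite{PRW}), so your attempt has to stand on its own. Its overall strategy --- induct on the truncations and track facet incidences --- is the standard one, but two of the steps you rely on are stated incorrectly, and they are exactly where the content of the argument lies. First, the truncation rule you use is wrong: cutting a polytope along a face $F$ produces a new facet that meets precisely the old facets that \emph{meet} $F$, not only those \emph{containing} $F$. Consequently your claim that the incidences of a newly created facet $F_I$ are controlled by the facets $F_J$ with $J\subsetneq I$ fails already for $G=P_4$: when the edge $F_{\{1\}}\cap F_{\{2\}}$ of $\Delta^3$ (already truncated at its end $\{1,2,3\}$) is cut, the new facet $F_{12}$ meets $F_{\{4\}}$ and $F_{123}$, and neither $\{4\}$ nor $\{1,2,3\}$ is a subset of $\{1,2\}$ --- indeed the compatibility criterion you are trying to prove forces these intersections, since $\{1,2\}\cup\{4\}\notin\mathcal{I}_{P_4}$ and $\{1,2\}\subset\{1,2,3\}$. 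Running your induction with the ``containing'' rule would therefore produce the wrong face poset.

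Second, the invariant you propose to maintain (``facets meet iff they are pairwise compatible among the tubes already cut'') is false at intermediate stages: in the same example, after all vertex cuts but before the edge cuts, $F_{\{1\}}$ and $F_{\{2\}}$ are incompatible tubes ($\{1,2\}\in\mathcal{I}_{P_4}$) yet still intersect, because the face $F_{\{1,2\}}^{(0)}$ whose truncation separates them has not yet been cut. A correct invariant must record, for each incompatible pair, whether the face $F_{I\cup J}^{(0)}$ that eventually destroys their intersection has been reached, and must treat intersections of two \emph{new} facets (created at different stages) separately from intersections of original facets; this bookkeeping is precisely what you defer. (Minor points: the cuts are performed in increasing order of dimension, i.e.\ \emph{decreasing} $|I|$, not increasing $|I|$; and the final step, that pairwise compatibility of $I_1,\dots,I_k$ forces $\bigcap_j F_{I_j}\neq\emptyset$, is asserted via ``extend to a maximal compatible family, which is a vertex'' --- that is essentially the flagness of $\triangle_G$ and itself requires proof, e.g.\ that every maximal tubing has exactly $n-1$ tubes.) As it stands the proposal is a plausible outline with a genuinely incorrect key lemma and an incorrect induction hypothesis, so it does not yet constitute a proof.
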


We can also write the (outward) primitive normal vector of each facet of $\triangle_G$ explicitly; when $G$ is a connected graph, for each facet $F_I$ corresponding to the proper connected induced subgraph $G[I]$, the primitive (outward) normal vector of $F_I$ is
\begin{equation*}
    \begin{cases}
        -\sum_{i\in I}\mathbf{e}_i,&\text{ if }n\not\in I,\\
        \sum_{j\not\in I}\mathbf{e}_j,&\text{ if }n\in I.
    \end{cases}
\end{equation*}

We restate Theorem~\ref{thm:CP:main} below, which is the main result of \cite{CP}.
\begin{theorem}[Theorem~1.1]
        Let $G$ be a graph. For any integer $i \ge 0$, the $i$th Betti number of the real toric manifold $X^\R(\triangle_G)$ is
        $$
            \beta^i(X^\R(\triangle_G))= \sum_{H\sqsubseteq G\atop |V(H)|=2i}\left|a(H)\right|.
        $$
    \end{theorem}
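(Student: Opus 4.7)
The plan is to reduce the computation to a combinatorial formula for the rational Betti numbers of a real toric manifold. Specifically, by the description of Suciu--Trevisan (and independently Choi--Park), for any real toric manifold $X^\R(P)$ one has an identification
\[
\beta^i(X^\R(P)) \;=\; \sum_{S} \dim \tilde H^{i-1}(K_S;\Q),
\]
where $S$ runs over representatives of the row space (mod $2$) of the characteristic matrix and $K_S$ is the full subcomplex of the dual simplicial complex of $P$ spanned by the facets indexed by $S$. The task then splits into two parts: first identify the row space concretely for $P=\triangle_G$, and then analyze the homotopy type of each $K_S$.

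For a connected graph $G$ on vertex set $[n]$, the primitive normal vectors listed above reduce, modulo $2$, to the indicator vectors $\sum_{i\in I}\e_i$ of proper connected induced subgraphs $G[I]$. First I would show that the row space (after identifying mod-$2$ vectors with subsets of $V(G)$) is naturally parametrized by subsets $J\subseteq V(G)$, so that each row corresponds to an induced subgraph $H=G[J]$, and that the associated subcomplex $K_{S_J}$ depends only on $H$ through the connectivity data of its components. Next, I would show that $\tilde H^\ast(K_{S_J};\Q)$ is concentrated in degree $|V(H)|/2-1$, and vanishes entirely unless $H$ is even; this is precisely the vanishing condition built into the definition of $a(H)$.

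The main step is then to identify $\dim \tilde H^{|V(H)|/2-1}(K_{S_J};\Q)$ with $|a(H)|$. The natural route is induction on $|V(H)|$: removing a maximal proper connected induced subgraph produces a cofibre sequence (or a Mayer--Vietoris decomposition) that turns into exactly the defining recurrence
\[
a(H) = -\sum_{H'\sqsubset H} a(H'),
\]
up to signs, after which uniqueness of the recursion with $a(\emptyset)=1$ forces equality of absolute values. The disconnected case follows by combining the factorization $\triangle_G=\prod_c\triangle_{G_c}$ with a Künneth splitting on the real toric side, matched with the multiplicativity of $a$ on disjoint unions. The hardest part is the homotopical analysis of $K_{S_J}$: establishing that it is rationally a wedge of spheres of the predicted dimension, and setting up the Mayer--Vietoris step so that its contribution matches the sign-alternating recurrence exactly. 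Once that topological input is in place, the final summation of $|a(H)|$ over induced subgraphs $H\sqsubseteq G$ with $|V(H)|=2i$ is purely bookkeeping.
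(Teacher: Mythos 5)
Your first two steps (the Suciu--Trevisan formula $\beta^i(X^\R(P))=\sum_{S\subseteq[n]}\tilde\beta^{i-1}(P_{\omega_S})$ and the reduction of each $P_{\omega_S}$ to data depending only on the induced subgraph $G[S]$) are exactly the skeleton of the actual argument; note that the present paper does not reprove this statement but cites \cite{CP}, and its Section~5 runs the parallel argument for $\square_G$. The genuine gap is in your third step. Everything hinges on showing that the relevant subcomplex is (up to homotopy) a wedge of spheres concentrated in the single degree $|S|/2-1$, vanishing unless $G[S]$ is even; in the paper's framework this is \emph{not} obtained by an ad hoc Mayer--Vietoris induction but by identifying the even part of $\partial\triangle_{G[S]}$ with the order complex of the poset $\mathcal{P}^{\mathrm{even}}_{G[S]}\setminus\{\hat0,\hat1\}$ (Lemma~4.7 of \cite{CP}), invoking the purity and shellability of $\mathcal{P}^{\mathrm{even}}_{G[S]}$ (Theorem~\ref{prop:simple-shellable}, itself a nontrivial combinatorial theorem), and then using Alexander duality on the sphere $\partial\triangle_{G[S]}$ together with the Philip Hall theorem to identify the number of spheres with $|\mu(\mathcal{P}^{\mathrm{even}}_{G[S]})|=|a(G[S])|$. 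You explicitly flag this concentration as ``the hardest part'' but offer no mechanism for it, so the proposal assumes precisely the content that makes the theorem true.

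Moreover, the inductive Mayer--Vietoris scheme you sketch is unlikely to work as stated: the defining recurrence $a(H)=-\sum_{H'\sqsubset H}a(H')$ sums over \emph{all} proper induced subgraphs (including disconnected ones and the null graph), whereas removing the star of a single facet corresponding to a maximal proper connected induced subgraph does not produce a decomposition whose pieces are indexed by that poset; and without already knowing single-degree concentration, a long exact sequence only yields Euler-characteristic information, in which case you are implicitly redoing the M\"obius/Philip Hall computation rather than bypassing it. Two smaller points also need care: for $\triangle_G$ the mod~$2$ normal vectors of facets $F_I$ with $n\in I$ are $\sum_{j\notin I}\mathbf e_j$, so the description of which facets enter $P_{\omega_S}$ involves the parity of $|S|$ as well as $|I\cap S|$ (minor bookkeeping, but it must be done before claiming the complex depends only on $G[S]$), and the vanishing for non-even $G[S]$ requires its own contractibility argument (the analogue of Lemma~\ref{lem:even:contractible}), which you assert but do not supply.
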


The Betti numbers of real toric manifolds associated with some interesting families of graphs are computed by using Theorem~\ref{thm:CP:main}.

\begin{corollary}[\cite{CP}]\label{cor:number}
Let $G$ be a graph with $n+1$ vertices.
For $1\le i\le\lfloor\frac {n+1}2\rfloor$,
$$
 \beta^i(X^\R(\triangle_G))=\begin{cases}
              \binom{n+1}{2i}A_{2i} & \text{if }G=K_{n+1},\\
              \binom{n+1}i-\binom{n+1}{i-1} &  \text{if }G=P_{n+1},\\
              \binom{n+1}i&\text{if }G=C_{n+1}\text{ and }2i< n+1,\\
              \frac 12\binom{2i}i &\text{if }G=C_{n+1}\text{ and }2i= n+1,\\
              \binom{n}{2i-1}A_{2i-1} &\text{if }G=K_{1,n},\\
            \end{cases}
        $$
where $A_k$ is the $k$th Euler zigzag number given by
    \[
        \sec x + \tan x = \sum_{k=0}^\infty A_{k}\frac{x^{k}}{k!}.
    \]
\end{corollary}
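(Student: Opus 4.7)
By Theorem~\ref{thm:CP:main}, each Betti number is a sum of $|a(H)|$ over induced subgraphs $H$ of fixed even order, so the plan is to (i)~compute $|a(G')|$ for each of the four connected graph types appearing as induced subgraphs of the ambient graph, and (ii)~sum these values over induced subgraphs of the correct order. Throughout I would use the multiplicativity $a(G_1\sqcup G_2)=a(G_1)a(G_2)$ from Lemma~\ref{lem:multiplicative}, which immediately kills any $H$ with an odd component.

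The complete and star cases reduce to single EGF computations. For $G=K_{n+1}$, every induced subgraph of order $m$ is $K_m$, so one needs $|a(K_{2k})|=A_{2k}$; the recursion $\sum_{j=0}^{m}\binom{2m}{2j}a(K_{2j})=0$ for $m\ge1$ is equivalent to $U(x)\cosh(x)=1$ for the EGF $U(x)=\sum_{m\ge0}a(K_{2m})x^{2m}/(2m)!$, giving $U(x)=1/\cosh(x)$ and hence $|a(K_{2m})|=A_{2m}$. For $G=K_{1,n}$, an induced subgraph with nonzero $a$-value and at least one vertex must contain the centre (otherwise every component is a singleton), so it is $K_{1,2i-1}$ on one of $\binom{n}{2i-1}$ choices of leaves; the analogous EGF $T(x)=\sum_{k\ge1}a(K_{1,2k-1})x^{2k-1}/(2k-1)!$ satisfies $T(x)\cosh(x)=-\sinh(x)$, so $T(x)=-\tanh(x)$ and $|a(K_{1,2k-1})|=A_{2k-1}$.

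For $G=P_{n+1}$, induced subgraphs are disjoint unions of sub-paths, and the key input is $|a(P_{2k})|=\Cat_k$, the $k$-th Catalan number. I would set up the bivariate block-decomposition GF $F(x,y)=\sum_{n,\,S\subseteq[n]}a(P_n[S])\,x^{n-|S|}y^{|S|}$ whose blocks are a single removed vertex (weight $x$) or a maximal kept run of length $\ell$ (weight $a(P_\ell)y^\ell$); concatenation yields $F=(1+R)/(1-x(1+R))$ with $R(y)=\sum_{k\ge1}a(P_{2k})y^{2k}$, and the vanishing of $[y^n]F(y,y)$ for even $n\ge2$ (immediate from the $a$-recursion on each $P_{2k}$) forces $S:=1+R$ to satisfy $y^2S^2+S-1=0$, so $S(y)=\Cat(-y^2)$ and $a(P_{2k})=(-1)^k\Cat_k$. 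The counting step then gives $\beta^i(X^\R(\triangle_{P_{n+1}}))=[x^{n+1-2i}y^{2i}]\Cat(y^2)/(1-x\Cat(y^2))=[z^i]\Cat(z)^{n-2i+2}$, which by the classical identity $[z^n]\Cat(z)^m=\tfrac{m}{n+m}\binom{2n+m-1}{n}$ simplifies to $\tfrac{n-2i+2}{n-i+2}\binom{n+1}{i}=\binom{n+1}{i}-\binom{n+1}{i-1}$.

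For $G=C_{n+1}$ with $2i<n+1$ I would use a \emph{pointing} argument: counting ordered pairs $(S,v)$ with $|S|=2i$ and $v\notin S$ in two ways and observing that deleting $v$ linearises $C_{n+1}$ into $P_n$ yields
\begin{equation*}
(n+1-2i)\sum_{|S|=2i}|a(C_{n+1}[S])|=(n+1)\sum_{S\subseteq P_n,\,|S|=2i}|a(P_n[S])|=(n+1)\Bigl(\binom{n}{i}-\binom{n}{i-1}\Bigr),
\end{equation*}
and Pascal's identity collapses this to $\binom{n+1}{i}$. For $2i=n+1$ the only contributor is $H=C_{n+1}$ itself, so the problem reduces to $|a(C_{2k})|=\tfrac12\binom{2k}{k}$; this I would derive by applying the signed version of the same pointing identity to the recursion identity $\sum_{S\subseteq[2k]}a(C_{2k}[S])=0$, which expresses $a(C_{2k})$ as an alternating binomial sum that simplifies by standard manipulations. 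The main obstacle is organising four independent generating-function arguments; of these the cycle case is the most delicate because it mixes the path formula with a separate linearisation step, but each individual calculation is routine once the correct block decomposition is in place.
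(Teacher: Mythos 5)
Your proposal is correct, but note that the paper itself gives no proof of this statement: Corollary~\ref{cor:number} is imported from \cite{CP}, where the relevant values $|a(K_{2k})|=A_{2k}$, $|a(P_{2k})|=\Cat(k)$, $|a(C_{2k})|=\tfrac12\binom{2k}{k}$, $|a(K_{1,2k-1})|=A_{2k-1}$ are computed and then combined with Theorem~\ref{thm:CP:main}, which is also the overall shape of your argument; the difference lies in how you obtain those $a$-numbers. Your computations check out: the EGF identities $U(x)\cosh x=1$ and $T(x)\cosh x=-\sinh x$ are exactly the defining recursions for $K_{2m}$ and $K_{1,2k-1}$ (only subgraphs containing the centre survive in the star case, as you say); for paths, $S=1+R$ is an even series, so the even part of $F(y,y)=S/(1-yS)$ equals $S/(1-y^2S^2)$, and the vanishing of the even coefficients in degrees $\ge 2$ indeed forces $y^2S^2+S-1=0$, hence $a(P_{2k})=(-1)^k\Cat(k)$; your extraction $[z^i]\Cat(z)^{n-2i+2}=\frac{n-2i+2}{n-i+2}\binom{n+1}{i}=\binom{n+1}{i}-\binom{n+1}{i-1}$ is the standard ballot-type identity; and the pointing/double-count for $C_{n+1}$ with $2i<n+1$ is valid because deleting any $v\notin S$ identifies $C_{n+1}[S]$ with an induced subgraph of the path $C_{n+1}-v\cong P_n$. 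The one step you leave genuinely implicit is $|a(C_{2k})|=\tfrac12\binom{2k}{k}$; the cleanest way to finish along your lines is to observe that all terms with $|S|=2j$ carry the common sign $(-1)^j$ (Corollary~\ref{cor:absign}), so the case already proved gives $a(C_{2k})=-\sum_{j=0}^{k-1}(-1)^j\binom{2k}{j}=(-1)^{k}\binom{2k-1}{k-1}=(-1)^k\tfrac12\binom{2k}{k}$, which is precisely the alternating binomial sum you gesture at. Compared with citing \cite{CP}, your derivation is self-contained and fairly uniform (a sign-forced functional equation per family, plus a transfer from paths to cycles), at the cost of redoing calculations that the reference already records.
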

Refer to \cite{SS} for the formulae for $a(G)$ and $\beta^i(X^\R(\triangle_G))$ for a complete multipartite graph $G$.

\bigskip

We finish the section by noting flagness. A simple polytope is \emph{flag} if any set of pairwise intersecting facets has nonempty intersection.

\begin{proposition}[Corollary~7.2\cite{PRW}] \label{lem:flag}
    For a graph $G$, the graph associahedron $\triangle_G$ is flag.
\end{proposition}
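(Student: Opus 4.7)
The plan is to reduce to the connected case and then directly invoke the face-lattice description of $\triangle_G$ recorded in the previous proposition. First I would handle the disconnected case: if $G$ has connected components $G_1,\ldots,G_\kappa$, then by definition $\triangle_G=\triangle_{G_1}\times\cdots\times\triangle_{G_\kappa}$. Every facet of a product polytope is obtained by replacing exactly one factor by a facet of that factor. Two facets coming from different components always meet, while two coming from the same component meet in $\triangle_G$ exactly when they meet in that factor. Hence a product of polytopes is flag whenever each factor is, and it suffices to treat the case of connected $G$.

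Assume therefore that $G$ is connected and let $F_{I_1},\ldots,F_{I_k}$ be any collection of facets of $\triangle_G$ that pairwise intersect. Applying the preceding proposition (the Carr--Devadoss description of facet intersections) to every pair of indices gives, for all $1\le i<j\le k$, one of the three alternatives $I_i\subseteq I_j$, $I_j\subseteq I_i$, or $I_i\cup I_j\notin\mathcal{I}_G$. But this is precisely the hypothesis under which the same proposition guarantees that the full $k$-fold intersection $F_{I_1}\cap\cdots\cap F_{I_k}$ is nonempty. Thus pairwise intersection already forces total intersection, which is the flag condition.

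In effect the real work is packaged into the face-lattice description of $\triangle_G$ in terms of tubings, where compatibility of a collection of tubes is dictated pairwise; the flag property of $\triangle_G$ is then essentially a restatement of that pairwise characterization. The main obstacle in a more self-contained argument would be to prove flagness by induction on the iterated truncation construction of $\triangle_G$ starting from $\Delta^{n-1}$ (which is itself not flag for $n\ge 3$), tracking at each stage that no two facets become pairwise meeting without a common vertex. This is straightforward in principle, using that every truncation is performed along a face whose normal vectors are indexed by a connected induced subgraph, but the bookkeeping is considerably more involved than the direct route above.
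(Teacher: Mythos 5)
Your argument is correct, but it does not parallel anything in the paper: the paper offers no proof of this proposition at all, simply citing Corollary~7.2 of \cite{PRW}, where flagness is established in the broader framework of nestohedra and building sets. What you do instead is derive flagness internally from the Carr--Devadoss facet-intersection description quoted just before the statement: since that proposition characterizes when an arbitrary collection $F_{I_1},\ldots,F_{I_k}$ of facets has nonempty intersection by a purely \emph{pairwise} condition (nestedness or $I_i\cup I_j\notin\mathcal{I}_G$), pairwise intersection immediately forces total intersection, and your reduction of the disconnected case to a product of flag polytopes is also sound (pairwise intersecting facets of a product restrict to pairwise intersecting facets in each factor, and a point in each factor-wise intersection assembles to a point in the full intersection). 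The trade-off is worth noting, and you note it yourself: your proof is only as strong as the quoted proposition in its full $k$-fold form, which is essentially the tubing/nested-complex description of the face lattice of $\triangle_G$ --- so the substantive content is outsourced to \cite{CD2006} rather than \cite{PRW}, and your contribution is the (correct) observation that flagness is an immediate formal consequence of the pairwise nature of tube compatibility. That is a legitimate and arguably more transparent route than the paper's bare citation, whereas the citation to \cite{PRW} buys a statement proved in greater generality (characterizing which nestohedra are flag) without relying on the exact phrasing of the intersection criterion reproduced in this paper.
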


\section{Graph cubeahedra} \label{sec:polytope}

In this section, we briefly review the construction of a graph cubeahedron in~\cite{DHV} and a relationship between graph associahedra and graph cubeahedra. Set $[\overline{n}]=\{\overline{1},\ldots,\overline{n}\}$.

\medskip
\noindent\textbf{Construction of a graph cubeahedron.}
Let us consider the standard cube $\square^n$ whose facets are labeled by $1,\ldots,n$ and $\overline{1},\ldots,\overline{n}$, where the two facets labeled by $i$ and $\overline{i}$ are on opposite sides. Then every face of $\square^n$ can be labeled by a subset $I$ of $[n]\cup[\overline{n}]$ satisfying {that
$I\cap [n]$ and $\{ i\in[n] \mid \bar{i} \in I\}$ are disjoint.}
Let $G$ be a graph with the vertex set $[n]$. {Recall that $\mathcal{I}_G$ is the set of all subsets $I$ of $[n]$ such that $G[I]$ is connected.}
The \emph{graph cubeahedron}, denoted by $\square_G$, is obtained from the standard cube $\square^n$ by truncating the faces labeled by $I\in\mathcal{I}_G$
in increasing order of dimensions. {It follows from Lemma~\ref{lem:truncation and delzant} that the graph cubeahedron~$\square_G$ is also a Delzant polytope, and the toric manifold $X(\square_G)$ is an iterated blow-up of $X(\square^n)=(\C P^1)^n$.}

\bigskip

\begin{example}
For paths $P_2$ and $P_3$, the graph cubeahedra $\square_{P_2}$ and $\square_{P_3}$ are illustrated in Figure~\ref{fig:ex:cube}. {Note that the toric manifold corresponding to the standard cube $\square^2$ is $\C P^1\times \C P^1$. Hence $X(\square_{P_2})$ corresponds to blow up of $\C P^1\times \C P^1$ at one fixed point of the torus action. Thus $X(\square_{P_1})=(\C P^1\times \C P^1)\#\overline{\C P^2}$ and the real toric manifold $X^\R(\square_{P_2})$ is $(\R P^1\times \R P^1)\#{\R P^2}$.}
    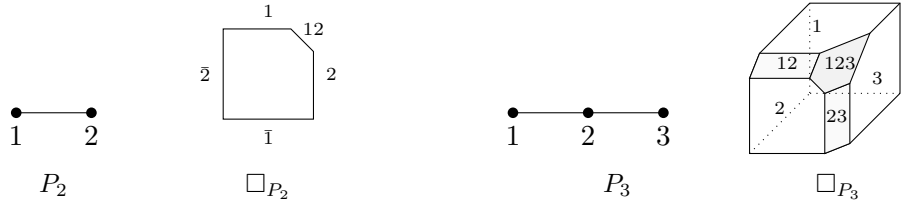
\begin{figure}[h]
    \centering
    \begin{subfigure}[b]{.16\textwidth}
    \centering
        \begin{tikzpicture}[scale=1]
            \fill (0,0) circle(2pt);
            \fill (1,0) circle(2pt);
            \draw (0,0)--(1,0);
            \draw (0,-0.3) node{$1$};
            \draw (1,-0.3) node{$2$};
        \end{tikzpicture}
        \caption*{$P_2$}
    \end{subfigure}
    \begin{subfigure}[b]{.16\textwidth}
    \centering
        \begin{tikzpicture}[scale=1.2]
            \draw (0,0)--(1,0)--(1,0.75)--(0.75,1)--(0,1)--cycle;
            \draw (0.5,-0.2) node{\tiny$\bar1$};
            \draw (0.5,1.2) node{\tiny$1$};
            \draw (-0.2,0.5) node{\tiny$\bar2$};
            \draw (1.2,0.5) node{\tiny$2$};
            \draw (1,1) node{\tiny$12$};
        \end{tikzpicture}
        \caption*{$\square_{P_2}$}
    \end{subfigure}
\qquad\qquad
     \begin{subfigure}[b]{.19\textwidth}       \begin{tikzpicture}[scale=1]
            \fill (0,0) circle(2pt);
            \fill (1,0) circle(2pt);    \fill (2,0) circle(2pt);
            \draw (0,0)--(2,0);
            \draw (0,-0.3) node{$1$};
            \draw (1,-0.3) node{$2$};
            \draw (2,-0.3) node{$3$};
        \end{tikzpicture}
                \caption*{$P_3$}
    \end{subfigure}
    \begin{subfigure}[b]{.14\textwidth}
        \begin{tikzpicture}[scale=.4]
            \draw (0,0)--(2.5,0)--(3+1/3,1/3)--(5,2)--(5,5)--(2,5)--(1/3,3+1/3)--(0,2.5)--cycle;
            \filldraw[fill=gray!4] (0,2.5)--(2,2.5)--(2+1/3,3+1/3)--(1/3,3+1/3)--cycle; 
            \filldraw[fill=gray!4] (2.5,0)--(3+1/3,1/3)--(3+1/3,2+1/3)--(2.5,2)--cycle; 
            \filldraw[fill=gray!10] (2+1/3,3+1/3)--(2,2.5)--(2.5,2)--(3+1/3,2+1/3)--(4,4)--cycle; 
            \draw[dotted] (2,5)--(2,2);
            \draw[dotted] (0,0)--(2,2);
            \draw[dotted] (5,2)--(2,2);
            \draw (4,4)--(5,5);
            \draw (1,1.5) node {\tiny$2$};
            \draw (2.25,4.25) node {\tiny$1$};
            \draw (1.25,3) node {\tiny$12$};
            \draw (3,3) node {\tiny$123$};
            \draw (2.9,1.25) node {\tiny$23$};
            \draw (4.25,2.5) node {\tiny$3$};
        \end{tikzpicture}
        \caption*{$\square_{P_3}$}
    \end{subfigure}\caption{Graph cubeahedra $\square_{P_2}$ and $\square_{P_3}$.}\label{fig:ex:cube}
    \end{figure}
\end{example}

Let us describe the facets of the graph cubeahedron and their outward normal vectors.
{We label each facet of $\square_G$ by $F_I$, where $I\in \mathcal{I}_G$
or $I$ is} a singleton subset of $[\bar{n}]$.
Then the primitive (outward) normal vector of the facet $F_I$ is
\begin{equation}\label{eq:Z_2-facet-vector}
    \begin{cases}
        \sum_{i\in I}\mathbf{e}_i&\text{ if }{I\in \mathcal{I}_G},\\
        -\mathbf{e}_i&\text{ if }I=\{\bar{i}\} \text{ for some }i\in [n].
    \end{cases}
\end{equation}

For a {graph} $G$ consisting of the connected components $G_1,\ldots,G_\kappa$, one easily shows that $\square_G$ is equivalent to $\square_{G_1}\times\cdots\times\square_{G_\kappa}$, where two Delzant polytopes are \emph{equivalent} if their normal fans are isomorphic.

Now we describe the face poset of $\square_G$ as in the following, which was given in   \cite{DHV}. The flagness is implicitly stated in  \cite{DHV} in describing its face poset.

\begin{proposition}[\cite{DHV}]\label{prop:faces_meet}
Let $G$ be a graph with the vertex set $[n]$. Then two facets $F_I$ and $F_J$ of $\square_G$  intersect if and only if one of the following holds.
        \begin{enumerate}
       \item Both $I$ and $J$ belong to $\mathcal{I}_G$ {and they satisfy} either
             $I\subseteq J$, $J\subseteq I$, or {$I\cup J\not\in\mathcal{I}_G$.}
       \item Exactly one of $I$ and $J$, say $I$, belongs to $\mathcal{I}_G$ and $J=\{\bar{j}\}$ for some $j\in [n]\setminus I$.
       \item Both $I$ and $J$ are singleton subsets of $[\bar{n}]$.
           \end{enumerate}
        Furthermore, the graph cubeahedron $\square_G$ is flag.
    \end{proposition}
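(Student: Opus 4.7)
I would prove the intersection characterization by induction along the truncation sequence in the construction of $\square_G$, and derive flagness as a corollary. Order the non-singleton elements of $\mathcal{I}_G$ as $I_1, \ldots, I_m$ by non-increasing cardinality (matching the construction's increasing-dimension truncation order), and set $P_0 = \square^n$, $P_k = \mathrm{Cut}_{F_{I_k}}(P_{k-1})$, so that $P_m = \square_G$. The strengthened inductive claim is that the facets of $P_k$ satisfy the proposition's three case conditions with $\mathcal{I}_G$ replaced by $\mathcal{I}'_k := \{\{i\} : i \in [n]\} \cup \{I_1, \ldots, I_k\}$. The base case $k = 0$ is the standard cube, where two distinct facets intersect unless paired as $F_{\{i\}}, F_{\{\bar i\}}$, in line with cases (1)--(3).

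For the inductive step, the key observation is that in $P_{k-1}$ the face being truncated still equals $\bigcap_{i \in I_k} F_{\{i\}}$: the only earlier truncations are of sets $I_j$ with $|I_j| \geq |I_k|$, and among these only supersets of $I_k$ in $\mathcal{I}_G$ can affect this face, but they only modify its boundary subfaces without altering its identity as the $|I_k|$-fold intersection. By Lemma~\ref{lem:truncation and delzant}, the new facet $F_{I_k}$ of $P_k$ meets a preexisting facet $F_J$ if and only if $F_J \cap \bigcap_{i \in I_k} F_{\{i\}} \neq \emptyset$ in $P_{k-1}$, and the inductive hypothesis translates this into the three case conditions with respect to $\mathcal{I}'_k$. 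The pivotal consistency check is that because the truncation order is size-decreasing, any connected superset of $I_k$ in $\mathcal{I}_G$ has already been truncated by step $k-1$, so the conditions ``$I_k \cup J \notin \mathcal{I}'_{k-1}$'' and ``$I_k \cup J \notin \mathcal{I}_G$'' agree in the situations that arise. One also verifies that the only pairwise intersection among old facets of $P_{k-1}$ destroyed at step $k$ is $\bigcap_{i \in I_k} F_{\{i\}}$ itself collapsing to empty, which matches the updated case (1) since $I_k$ now lies in $\mathcal{I}'_k$.

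The main technical obstacle is this bookkeeping at each truncation step, ensuring that no extraneous intersections are created or destroyed, so the characterization with respect to $\mathcal{I}'_k$ is preserved throughout. At $k = m$ one has $\mathcal{I}'_m = \mathcal{I}_G$, yielding the proposition. For flagness, once the pairwise characterization is established, one shows that any pairwise-intersecting family of facets of $\square_G$ extends to a common vertex: the corresponding labels in $\mathcal{I}_G \cup \{\{\bar i\} : i \in [n]\}$ form a nested-set-like configuration that can be greedily completed to a maximal compatible family of size $n$, and the associated normal vectors from \eqref{eq:Z_2-facet-vector} form a $\mathbb{Z}$-basis (by the Delzant property), defining a common vertex.
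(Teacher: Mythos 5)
First, note that the paper does not prove this proposition at all: it is quoted from \cite{DHV}, where the face poset of $\square_G$ is described via (design) tubings, and the flagness is read off from that description. So your proposal must stand on its own, and as written it has a genuine gap. Your induction carries only the \emph{pairwise} intersection pattern of the facets of $P_{k-1}$, but the inductive step requires deciding when an old facet $F_J$ meets the codimension-$|I_k|$ face $\bigcap_{i\in I_k}F_{\{i\}}$ that is about to be truncated; this is a statement about an $(|I_k|+1)$-fold intersection, which pairwise data alone does not determine (that implication is exactly flagness of the intermediate polytope, which you have not established and have postponed to the end). To make the induction close, the inductive hypothesis must be strengthened to a description of which \emph{collections} of facets of $P_k$ have nonempty common intersection (equivalently, the face poset of $P_k$, or the pairwise statement together with flagness of every $P_k$), and one must also justify the standard truncation facts you invoke: that the new facet meets exactly the old facets meeting the truncated face, that faces not contained in the truncated face survive, and that the only destroyed pairwise intersection occurs when $|I_k|=2$ (a codimension count in a simple polytope). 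Incidentally, Lemma~\ref{lem:truncation and delzant} does not provide these combinatorial facts; it concerns the Delzant structure and the blow-up description.

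The second gap is the flagness endgame. From the Delzant condition you only know that the normals of the $n$ facets through an \emph{actual} vertex form a $\Z$-basis; the converse you use --- that a compatible family of $n$ labels whose normal vectors from \eqref{eq:Z_2-facet-vector} are a basis must meet in a vertex --- is not a consequence of the Delzant property, and indeed ``every pairwise-compatible family of labels is a face'' is precisely the content of flagness, so the argument is circular as sketched. The natural repair is the same as above: prove inductively that the faces of $P_k$ are exactly the compatible families with respect to $\mathcal{I}'_k$ (the nested-set/tubing description of \cite{DHV}); flagness of $\square_G$ then follows immediately, and the pairwise characterization in the proposition is the special case of families of size two. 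Your ordering convention, the observation that $\bigcap_{i\in I_k}F_{\{i\}}$ is still a face of $P_{k-1}$, and the consistency check that connected supersets of $I_k$ are already in $\mathcal{I}'_{k-1}$ are all correct and would be ingredients of such a proof, but the induction must track the full face structure, not just pairs.
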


We can easily check that the map from $2^{[n]}\cup\{\{\bar{1}\},\ldots,\{\bar{n}\}\}$ to $[n+1]$ defined by
\begin{equation*}
    I\mapsto\left\{\begin{array}{ll}
      \{i\} & \text{ for }I=\{\bar{i}\},\\
      {[n+1]} \setminus I & \text{ for }I\subset [n],
    \end{array}\right.
\end{equation*}
gives an isomorphism from the normal fan of $\square_{K_n}$ to the normal fan of $\triangle_{K_{1,n}}$, where $K_n$ is a complete graph and $K_{1,n}$ is a star. We can also easily check that the map from the set of facets of $\triangle_{P_{n+1}}$ to that of $\square_{P_n}$ given by
\begin{equation*}
    I \mapsto \left\{\begin{array}{ll}
        I & \text{ if } I\subset [n],\\
        \{\bar{j}\} & \text{ if }n+1\in I \text{ and } |I|={n+1-j},
    \end{array}
    \right.
\end{equation*} gives an isomorphism from the face poset of $\triangle_{P_{n+1}}$ to that of $\square_{P_n}$. However, there is no isomorphism between the normal fan of $\triangle_{P_4}$ and that of $\square_{P_3}$ because $\triangle_{P_4}$ has a pair of square facets whose normal vectors are parallel but $\square_{P_3}$ has no pair of such square facets.

The relationship above was noted in~\cite{MP2017} between two polytopes  $\triangle_G$ and $\square_{H}$ when $G$ is an octopus and  $H$ is a spider. An \emph{octopus} is a tree with at most one vertex of degree more than two. A \emph{spider} is a graph obtained from a complete graph $K_n$ by attaching at most one path by {one of} its leaf to each vertex of $K_n$, see Figure~\ref{fig:spider}.
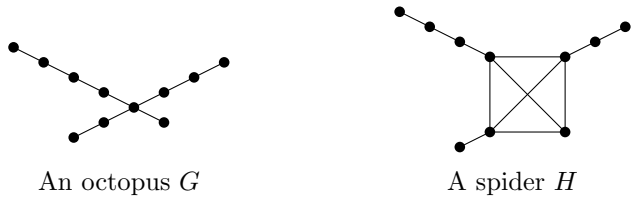
\begin{figure}[h]
    \centering
    \begin{subfigure}[b]{.3\textwidth}
    \centering
        \begin{tikzpicture}[scale=1]
            \path (0,0) coordinate (0)
                  (-0.4,0.2) coordinate (01)
                  (-0.8,0.4) coordinate (02)
               (-1.2,0.6) coordinate (03)
               (-1.6,0.8) coordinate (04)
                (0.4,0.2) coordinate (11)
                (0.8,0.4) coordinate (12)        (1.2,0.6) coordinate (13)
                (-0.4,-0.2) coordinate (21)     (-0.8,-0.4) coordinate (22)
                (0.4,-0.2) coordinate (31)
                              (0.4,-0.2) coordinate (41)
                  (0.8,-0.4) coordinate (42)
               (1.2,-0.6) coordinate (43);
            \fill (01) circle(2pt);
            \fill  (02) circle(2pt) ;
            \fill (03) circle(2pt);       \fill (04) circle(2pt);
            \fill
            (11) circle(2pt)  ;
            \fill(12) circle(2pt);   \fill(13) circle(2pt);
            \fill  (21) circle(2pt);            \fill  (22) circle(2pt);
            \fill (0) circle(2pt);      \fill (31) circle(2pt);
            \draw (0)--(04);      \draw (0)--(13);
              \draw (0)--(22);      \draw (0)--(31);
        \end{tikzpicture}
        \caption*{An octopus $G$}
    \end{subfigure}
    \begin{subfigure}[b]{.3\textwidth}
    \centering
           \begin{tikzpicture}[scale=1]
            \path (0,0) coordinate (0)
                  (-0.4,0.2) coordinate (01)
                  (-0.8,0.4) coordinate (02)
               (-1.2,0.6) coordinate (03)
                (1.4,0.2) coordinate (11)
                (1.8,0.4) coordinate (12)
                (-0.4,-1.2) coordinate (21);
            \fill (01) circle(2pt);
            \fill  (02) circle(2pt)  ;
            \fill(03) circle(2pt);
            \fill
            (11) circle(2pt) ;
            \fill (12) circle(2pt) ;
            \fill (21) circle(2pt) ;
            \fill (0) circle(2pt);   \fill (1,0) circle(2pt);\fill (0,-1) circle(2pt);\fill (1,-1) circle(2pt);
            \draw (0)--(03);      \draw (1,0)--(12);      \draw (0,-1)--(21);    \draw (0,-1)--(0)--(1,0)--(1,-1)--cycle; \draw (0)--(1,-1);\draw (1,0)--(0,-1);
        \end{tikzpicture}
                \caption*{A spider $H$}
    \end{subfigure}\caption{An octopus $G$ and a spider $H$.}\label{fig:spider}
    \end{figure}
The \emph{line graph} $L(G)$ of a graph $G$ is the intersection graph of $E(G)$. In other words, the vertex set of $L(G)$ is $E(G)$ and two vertices $e$ and $e'$ of $L(G)$ are adjacent if and only if $e\cap e'\neq \emptyset$,
{that is, $e$ and $e'$ share an endpoint in $G$.} Note that the line graph of an octopus is a spider. In Figure~\ref{fig:spider}, the line graph of $G$ is equal to $H$.

\begin{proposition}[\cite{MP2017}]\label{prop:spider:octopus}
For two connected graphs $G$ and $H$, the polytopes  $\triangle_G$ and $\square_{H}$ are combinatorially equivalent if and only if $G$ is an octopus and $H$ is a spider which is equal to the line graph of $G$. Furthermore, if $G$ is a star $K_{1,n}$, the normal fan of $\triangle_G$ is isomorphic to the normal fan of $\square_{L(G)}$.
\end{proposition}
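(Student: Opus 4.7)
The plan is to prove the equivalence in both directions, with the \emph{if} direction by an explicit face-poset construction generalizing the path case already exhibited, the \emph{only if} direction by transporting obstructions from the face poset of $\square_H$, and the \emph{furthermore} clause by observing that the bijection for the star case happens to respect the ambient lattice.

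For the \emph{if} direction, I would assume $G$ is an octopus and designate its unique vertex $v$ of degree $\ge 3$ (or, if $G$ is a path, an endpoint), so that $G-v$ is a disjoint union of paths (the legs). I would then partition the proper connected induced subgraphs $G[I]$ of $G$ according to whether $v\in I$. A subgraph with $v\notin I$ lies inside a single leg, and its edge set is itself a connected induced subgraph of $L(G)$; map such a $G[I]$ to the corresponding non-bar facet of $\square_{L(G)}$. A subgraph with $v\in I$ is specified by its extensions into the individual legs, and I would map it to a singleton bar facet $\{\bar e\}$ of $\square_{L(G)}$ via a canonical rule mimicking the formula $I\mapsto\{\bar j\}$ given in the paragraph preceding the proposition statement. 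After verifying that the cardinalities match, facet-intersection preservation reduces to three cases against Proposition~\ref{prop:faces_meet}: two $v$-free subgraphs (case (1)), one $v$-free and one $v$-containing (case (2)), and two $v$-containing (case (3)), each of which should follow from direct combinatorics on the legs of the octopus. The \emph{furthermore} statement for the star $K_{1,n}$ then falls out, because in that special case the explicit bijection exhibited just before the proposition sends each primitive facet-normal of $\square_{K_n}$ recorded in~\eqref{eq:Z_2-facet-vector} to the corresponding primitive normal of $\triangle_{K_{1,n}}$ under a natural identification of lattices, upgrading the combinatorial equivalence to an isomorphism of normal fans.

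For the \emph{only if} direction, the main obstacle is arguing that a combinatorial equivalence $\triangle_G\cong\square_H$ forces $G$ to be an octopus; once that is established, recovering $H=L(G)$ is a matter of matching the non-bar facet incidences of $\square_H$ with the connected-subgraph poset of $L(G)$. My approach is to transport the bar-facet structure across the equivalence: by Proposition~\ref{prop:faces_meet}(3) the bar facets $F_{\{\bar i\}}$ of $\square_H$ form a clique of size $|V(H)|$ in the facet-intersection graph, and by~\eqref{eq:Z_2-facet-vector} their primitive outward normals form the negatives of a coordinate basis of the ambient lattice. Locating the image of this clique inside $\triangle_G$ and comparing with the explicit facet-normal formulas recorded in Section~\ref{sec:simple_graph} should force $G$ to have no cycle and at most one vertex of degree greater than two, which is precisely the octopus condition. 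The hardest step is this cycle-and-branching exclusion, which I expect to require a careful case analysis of how proper connected induced subgraphs of $G$ interact under the intersection criterion for $\triangle_G$ stated in Section~\ref{sec:simple_graph}, possibly packaged through the presence or absence of certain forbidden local configurations of pairwise-intersecting facets in $\square_H$ that cannot be realized in $\triangle_G$ unless $G$ is a tree with a single branching vertex.
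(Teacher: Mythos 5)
Your ``if''-direction bijection has a genuine gap: the partition you propose is mismatched both numerically and structurally. For an octopus $G$ with hub $v$ and legs with $\ell_1,\dots,\ell_k$ vertices, the proper connected induced subgraphs containing $v$ number $\prod_i(\ell_i+1)-1$, while $\square_{L(G)}$ has only $\sum_i\ell_i$ bar facets, so sending the $v$-containing tubes to bar facets cannot be injective once $k\ge 2$; already for $G=K_{1,3}$, $H=L(G)=K_3$ you have $7$ hub-containing tubes against $3$ bar facets. Indeed the paper's own explicit map for the star case goes the opposite way: the bar facets $\{\bar i\}$ of $\square_{K_n}$ correspond to the single-leaf tubes $\{i\}$ of $K_{1,n}$, while the non-bar facets $I\subseteq[n]$ correspond to the hub-containing tubes $[n+1]\setminus I$. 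Your rule for $v$-free tubes is also ill-defined: a single vertex inside a leg is a proper connected induced subgraph whose edge set is empty, so ``its edge set'' is not a facet label of $\square_{L(G)}$ (this already breaks down in the path case). A correspondence consistent with the counts has to match the hub-containing tubes of $G$ with the connected subgraphs of the spider meeting its central clique (both families have $\prod_i(\ell_i+1)-1$ members) and distribute the leg intervals of $G$ among the bar facets and the clique-avoiding spider subgraphs; with your partition the promised verification ``that the cardinalities match'' fails, and the ensuing three-case check against Proposition~\ref{prop:faces_meet} never gets started.

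The ``only if'' direction is likewise not yet a proof. A combinatorial equivalence preserves only the face poset, so the facet normals of \eqref{eq:Z_2-facet-vector} and of Section~\ref{sec:simple_graph} cannot be ``transported''; only the intersection pattern can, and the facet-intersection graph of $\square_H$ has many maximal cliques besides the bar-facet one, so locating the image of that clique inside $\triangle_G$ is itself the hard step, after which you defer the exclusion of cycles and of a second branch vertex to an unspecified case analysis. For comparison, the paper does not prove this proposition at all: it is quoted from Manneville and Pilaud \cite{MP2017}, and the text only exhibits the explicit path and complete-graph/star isomorphisms, the latter being precisely the ``furthermore'' clause --- the one part of your proposal that is sound.
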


Thus for an octopus $G$, the $h$-vector of $\triangle_G$ is equal to that of $\square_{L(G)}$, and hence the Betti numbers of the toric manifold $X(\triangle_G)$ are equal to {those} of the toric manifold $X(\square_{L(G)})$, and the $\Z_2$-Betti numbers of the real toric manifold $X^\R(\triangle_G)$ are equal to {those} of the real toric manifold $X^\R(\square_{L(G)})$.

\begin{remark}
    In~\cite{MP2017}, the authors define a family of complete simplicial fans, called \emph{compatibility fans}, whose underlying simplicial complex is {dual to} the graph associahedron, and they also define a family of complete simplicial fans, called \emph{design compatibility fans}, whose underlying simplicial complex is {dual to} the graph cubeahedron. The normal fan of $\triangle_G$ (respectively, $\square_G$) is a compatibility fan (respectively, a design compatibility fan) associated with $G$ and the normal fan of $\triangle_G$ is not isomorphic to that of $\square_{L(G)}$ even if $G$ is an octopus in general. But the normal fan of $\square_{L(G)}$ is isomorphic to some complete non-singular fan associated with $\triangle_{G}$, the dual compatibility fan, for an octopus~$G$. See~\cite{MP2017} for more details.
\end{remark}

\section{The $a$-number and the $b$-number of a graph and their relationships} \label{sec:main_results}

In this section, {we first study how the $a$-numbers and the $b$-numbers are related to each other and then describe the Betti numbers of the real toric manifold corresponding to a graph $G$ in terms of the $b$-numbers. We also show that the real toric manifolds $X^\R(\triangle_G)$ and $X^\R(\square_{L(G)})$ have the same Betti numbers for a forest~$G$.}

Recall the two graph invariants $a(G)$ and $b(G)$ given in Section~\ref{sec1}.
For a graph $G$,  $a(G)$ and $b(G)$, called the \emph{$a$-number} and the \emph{$b$-number} of $G$, respectively, are defined as follows.
    \[  a(G)=\begin{cases}
      1 &\text{if }V(G)=\emptyset,\\
      0 &\text{if $G$ is not even},\\
      -\sum_{H: H\sqsubset G}  a(H) &\text{otherwise},\\
    \end{cases} \quad  \quad b(G)=\begin{cases}
      1     &\text{if }V(G)=\emptyset,\\
      0 &\text{if $G$ is not odd},\\
      -\sum_{H: H\sqsubset G}  b(H) &\text{otherwise}.\\
    \end{cases}\]

    \begin{example}
        Let us compute the $a$- and $b$-numbers of a path $G=P_4$, where $V(P_4)=\{1,2,3,4\}$ and $E(P_4)=\{\{1,2\},\{2,3\},\{3,4\}\}$. Then the $a$- and $b$-number of the induced subgraphs of $P_4$ are as follows:
        \begin{equation*}
            \begin{array}{l}
                a(G[1])=a(G[2])=a(G[3])=a(G[4])=a(P_1)=0\\
                a(G[1,2])=a(G[2,3])=a(G[3,4])=a(P_2)=-1\\
                a(G[1,3])=a(G[1,4])=a(G[2,4])=a(P_1\sqcup P_1)=0\\
                a(G[1,2,3])=a(G[2,3,4])=a(P_3)=0\\
                a(G[1,2,4])=a(G[1,3,4])=a(P_1\sqcup P_2)=0\\
                a(P_4)=-a(\emptyset)-3a(P_2)=2,
            \end{array}
        \end{equation*} and
        \begin{equation*}
            \begin{array}{l}
                b(G[1])=b(G[2])=b(G[3])=b(G[4])=b(P_1)=-1\\
                b(G[1,2])=b(G[2,3])=b(G[3,4])=b(P_2)=0\\
                b(G[1,3])=b(G[1,4])=b(G[2,4])=b(P_1\sqcup P_1)=1\\
                b(G[1,2,3])=b(G[2,3,4])=b(P_3)=-b(\emptyset)-3b(P_1)-b(P_1\sqcup P_1)=1\\
                b(G[1,2,4])=b(G[1,3,4])=b(P_1\sqcup P_2)=0\\
                b(P_4)=0.
            \end{array}
        \end{equation*}
    \end{example}

    One can observe that the invariants $a(G)$ and $b(G)$ are the M\"obius invariants of some bounded posets as follows. A poset $\mathcal{P}$ is \emph{bounded} if it has a unique maximum element, denoted by $\hat{1}$, and a unique minimum element, denoted by $\hat{0}$. For a finite bounded poset $\mathcal{P}$, the \emph{M\"{o}bius invariant} of  $\mathcal{P}$ is defined as $\mu(\mathcal{P})=\mu_{\mathcal{P}}(\hat{0},\hat{1})$.\footnote{
The M\"{o}bius function $\mu$ can be defined inductively by the following relation: for a finite poset $\mathcal{P}$ and $s,t\in \mathcal{P}$,
    \[
        \mu_\mathcal{P}(s,t) =
        \begin{cases}
            {}\qquad 1 & \textrm{if}\quad s = t\\[6pt]
            \displaystyle -\sum_{r\, :\,  s\leq r <t} \mu_\mathcal{P}(s,t) & \textrm{for} \quad s<t \\[6pt]
            {}\qquad 0 & \textrm{otherwise}.
        \end{cases}
    \]}

    For a graph $G$, we define
\begin{eqnarray*}
  \mathcal{P}_G^{\mathrm{even}}&=&\{\emptyset\neq I\subsetneq V(G)\mid G[I] \text{ is even} \}\cup\{\hat{0},\hat{1}\} \text{ and}\\
    \mathcal{P}_G^{\mathrm{odd}}&=&\{\emptyset\neq I\subsetneq V(G)\mid G[I] \text{ is odd} \}\cup\{\hat{0},\hat{1}\}.
\end{eqnarray*}
By the definitions of $a$- and $b$-numbers, $a(G)=\mu({\mathcal{P}_G^{\mathrm{even}}})$ when $G$ is even, and $b(G)=\mu({\mathcal{P}_G^{\mathrm{odd}}})$ when $G$ is odd.
We can also check that the $a$- and $b$- numbers are multiplicative as follows.
\begin{lemma}\label{lem:multiplicative}
    Let $G$ and $H$ be two disjoint graphs. Then we have
    \[
        a(G \sqcup H) = a(G)a(H)\quad \text{and} \quad b(G \sqcup H) = b(G)b(H).
    \]
\end{lemma}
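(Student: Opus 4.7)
The plan is to prove both multiplicativity statements simultaneously by induction on $|V(G)|+|V(H)|$, since the arguments for $a$ and $b$ are formally identical.

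For the base case, if one of $G, H$ is the null graph, say $H$, then $G \sqcup H = G$ and $a(H)=1$ (respectively $b(H)=1$), so the identity is immediate. For the inductive step, I would first dispose of the degenerate parity case: a connected component of $G \sqcup H$ is a component of $G$ or of $H$, so $G \sqcup H$ is even if and only if both $G$ and $H$ are even (and likewise for ``odd''). Hence when $G \sqcup H$ fails to be even, at least one of $a(G), a(H)$ vanishes, and the identity $0 = 0$ holds; the same argument works for $b$.

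The substantive case is when $G$ and $H$ are both even (resp. odd) and nonempty. The key observation is that an induced subgraph of $G \sqcup H$ is uniquely $K_G \sqcup K_H$ with $K_G \sqsubseteq G$ and $K_H \sqsubseteq H$ (allowing null graphs), and $K_G \sqcup K_H \sqsubset G \sqcup H$ precisely when $(K_G, K_H) \neq (G, H)$. Expanding the recursive definition and applying the induction hypothesis to each term,
\begin{align*}
-a(G \sqcup H) &= \sum_{\substack{K_G \sqsubseteq G,\; K_H \sqsubseteq H \\ (K_G, K_H) \neq (G, H)}} a(K_G)\,a(K_H) \\
&= \Bigl(\sum_{K_G \sqsubseteq G} a(K_G)\Bigr)\Bigl(\sum_{K_H \sqsubseteq H} a(K_H)\Bigr) - a(G)\,a(H).
\end{align*}

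Finally, the recursive definition itself, rewritten, gives $\sum_{K \sqsubseteq G} a(K) = 0$ whenever $G$ is even and nonempty (and likewise for odd graphs with $b$). Applying this to both factors collapses the product to zero, leaving $a(G \sqcup H) = a(G)a(H)$, and verbatim the same for $b$. The only mild subtlety is bookkeeping the convention that $\sqsubseteq$ and $\sqsubset$ include the null graph, which is what makes the identity $\sum_{K \sqsubseteq G} a(K)=0$ clean; no genuine obstacle arises, since all the structural work has already been packaged into the recursive definitions of $a$ and $b$.
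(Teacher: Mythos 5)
Your proof is correct, but it takes a different route from the paper. The paper first observes (as you do) that the parity-mismatch case gives $0=0$, and then identifies $a(G)$ and $b(G)$ with the M\"obius invariants $\mu(\mathcal{P}_G^{\mathrm{even}})$ and $\mu(\mathcal{P}_G^{\mathrm{odd}})$, notes the poset isomorphism $\mathcal{P}_{G\sqcup H}^{\mathrm{even}}\cong \mathcal{P}_G^{\mathrm{even}}\times \mathcal{P}_H^{\mathrm{even}}$ (and likewise for odd), and invokes the multiplicativity of M\"obius invariants under products of bounded posets. You instead argue by induction on $|V(G)|+|V(H)|$ directly from the recursive definitions, using the decomposition of induced subgraphs of $G\sqcup H$ as pairs $(K_G,K_H)$ together with the identity $\sum_{K\sqsubseteq G}a(K)=0$ for even nonempty $G$ (and its $b$-analogue for odd graphs) to collapse the double sum; in effect you re-derive the relevant instance of M\"obius multiplicativity by hand. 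Your argument is self-contained and more elementary, requiring no poset machinery; the paper's is shorter and reuses the M\"obius-invariant interpretation of $a$ and $b$, which it needs anyway for Theorem~\ref{thm:abdual} and Corollary~\ref{cor:absign}. All the bookkeeping in your write-up (propriety of the pairs $(K_G,K_H)\neq(G,H)$ guaranteeing strictly smaller vertex count for the induction, and the convention that $\sqsubseteq$, $\sqsubset$ include the null graph) is handled correctly.
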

\begin{proof}
If $G\sqcup H$ is not even, then both $a(G \sqcup H)$ and $a(G)a(H)$ are zero by definition. Similarly, if $G\sqcup H$ is not odd, then both $b(G \sqcup H)$ and $b(G)b(H)$ are zero by definition. If $G\sqcup H$ is even (respectively, odd), then we has an isomorphism $\mathcal{P}_{G\sqcup H}^{\mathrm{even}} \cong \mathcal{P}_G^{\mathrm{even}} \times \mathcal{P}_H^{\mathrm{even}}$ (respectively, $\mathcal{P}_{G\sqcup H}^{\mathrm{odd}} \cong \mathcal{P}_G^{\mathrm{odd}} \times \mathcal{P}_H^{\mathrm{odd}}$). Therefore, the proof is done by multiplicativity of M\"obius invariants.
\end{proof}

A finite, pure simplicial complex $K$ of dimension $n$ is called \emph{shellable} if there is an ordering $C_1, C_2, \ldots,C_t$ of the maximal simplices of $K$, called a \emph{shelling}, such that $(\bigcup_{i=1}^{k-1}C_i)\cap C_k$ is pure of dimension $n-1$ for every $k=2,3,\dotsc,t$. It is well-known in~\cite{S} that shellable complexes are Cohen-Macaulay and thus homotopy equivalent to a wedge of spheres of the same dimension. In \cite{B}, Bj\"orner presented a criterion for shellability of order complexes. If the order complex of a poset $\mathcal{P}$ is shellable, then we say $\mathcal{P}$ is shellable.

\begin{theorem}[\cite{CP}]\label{prop:simple-shellable}
    For every graph $G$, $\mathcal{P}_G^{\mathrm{even}}$ is a pure shellable poset of length $\left\lceil\frac{|V(G)|}{2}\right\rceil$.
\end{theorem}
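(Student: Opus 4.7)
The plan is to construct an EL-labeling of the Hasse diagram of $\mathcal{P}_G^{\mathrm{even}}$ and invoke Bj\"orner's criterion (the reference \cite{B} already cited in the paper) to conclude purity, shellability, and the asserted length in a single argument. Throughout, fix a total order $V(G) = \{v_1 < v_2 < \cdots < v_n\}$.

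The heart of the argument is a grading lemma: every cover relation $I \lessdot J$ between two interior elements of $\mathcal{P}_G^{\mathrm{even}}$ satisfies $|J \setminus I| = 2$. To show this, take any even $I' \supsetneq I$ with $I' \subsetneq V(G)$ and $|I' \setminus I|$ minimal. Let $S = I' \setminus I$, let $C_1, \ldots, C_p$ be the components of $G[I]$, and let $D_1, \ldots, D_q$ be the components of $G[I']$. For each $D_j$ the set $S_j = D_j \cap S$ has even cardinality, because $|D_j|$ and each $|C_i|$ with $C_i \subseteq D_j$ are even. I would pick a component $D_{j_0}$ with minimal nonzero $|S_{j_0}|$ and extract a two-element $T \subseteq S_{j_0}$ with $I \cup T$ even, using the following dichotomy: if $S_{j_0}$ contains two adjacent vertices that are not attached to any $C_i$, they form a new size-$2$ component on their own; otherwise a pair can be chosen to attach to a common $C_i$ or to two distinct $C_i$'s, in either case producing an even-sized merged component while leaving the other components of $G[I]$ untouched. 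Minimality of $|S|$ then forces $|S| = 2$, which yields gradedness and, in particular, purity; counting covers along any maximal chain gives the length $\lceil |V(G)|/2 \rceil$.

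With the grading lemma in hand, I would define $\lambda(I \lessdot J) = (\min(J \setminus I), \max(J \setminus I))$ on each interior cover, ordered lexicographically in $\{1, \ldots, n\}^2$; extend the same rule to covers out of $\hat{0}$, and assign covers into $\hat{1}$ a label strictly larger than every interior label. For an interval $[x, y]$ the unique strictly increasing maximal chain is built greedily: at each step adjoin the lex-smallest admissible pair $\{v_i, v_j\}$ keeping the running set even and contained in $y$. The grading lemma guarantees the greedy step always succeeds, uniqueness of the smallest admissible pair is enforced by the lex order, and any non-greedy chain deviates by selecting a lex-larger pair and is therefore lex-larger overall. This verifies the two EL-axioms and completes the proof via Bj\"orner's theorem.

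The main obstacle, I anticipate, is the grading lemma itself: although the case analysis is conceptually clear, disentangling how vertices of $S$ attach to components of $G[I]$ while maintaining parity balance requires care, particularly when $S$ contains no vertex adjacent to $I$ so that any minimal two-element extension must live entirely inside $S$ rather than straddle the boundary. Once the lemma is secured, the verification of the EL-labeling is essentially formal and reduces to an induction on $|V(G)|$ restricted to each sub-interval.
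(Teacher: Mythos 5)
First, note that this paper does not prove the statement at all: it is quoted from \cite{CP}, so there is no in-paper argument to compare with; judged on its own merits, your proposal has several genuine gaps. The most serious one is exactly the one you flag: the grading lemma is not established, and the case analysis you sketch is wrong as stated. If $u$ attaches to one component $C_i$ and $v$ to a \emph{different} component $C_j$ and $u,v$ are not adjacent, then $G[I\cup\{u,v\}]$ has two components of odd size, so ``attaching to two distinct $C_i$'s'' does not produce an even graph; $I\cup\{u,v\}$ is even precisely when $u$ and $v$ lie in a common component of $G[I\cup\{u,v\}]$, i.e.\ are adjacent or attach to a common component of $G[I]$. Producing such a pair inside $S=I'\setminus I$ needs a real argument (for instance: take $u,v\in S$ lying in one component of $G[I']$ at minimal distance in that component; the interior vertices of a shortest $u$--$v$ path all lie in $I$, hence in a single component of $G[I]$, which forces the good pair), and your sketch never supplies it. Moreover, even granting that every interior cover satisfies $|J\setminus I|=2$, purity of the bounded poset does not follow: you must also show that all \emph{maximal} proper even subsets (the coatoms below $\hat 1$) have the same cardinality, which is a separate argument in which connectivity of $G$ enters. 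This is not a technicality: the length claim $\lceil|V(G)|/2\rceil$ is in fact false for disconnected graphs with three or more odd components (e.g.\ $G=K_2\sqcup 3K_1$ gives a poset of length $2$, not $3$; the paper itself later uses the correct length $\frac{|V(H)|-\kappa(H)}{2}+1$ for odd $H$). So ``counting covers along any maximal chain'' presupposes precisely what has to be proved, and no argument along your lines can verify the literal statement without restricting $G$ or correcting the length.

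The EL-labeling step also fails as designed. Assigning to every cover $I\lessdot\hat 1$ one and the same label, strictly larger than all interior labels, makes \emph{any} maximal chain whose interior labels increase into an increasing chain, so the uniqueness axiom of Bj\"orner's criterion breaks in upper intervals: for $G=P_5$ and $I=\{2,3\}$, both coatoms $\{1,2,3,4\}$ and $\{2,3,4,5\}$ cover $I$, and both chains $I<\{1,2,3,4\}<\hat 1$ and $I<\{2,3,4,5\}<\hat 1$ are increasing; for $G=P_4$ all three maximal chains of $[\hat 0,\hat 1]$ are increasing. So the labeling $(\min,\max)$ with a constant top label is not an EL-labeling, and the ``essentially formal'' verification you defer is exactly where the difficulty lies (handling the artificial $\hat 1$, i.e.\ distinguishing the top covers, is the nontrivial part of the shellability proofs in \cite{CP} and \cite{PP2017}). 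The greedy/lexicographic claim for interior intervals is likewise asserted rather than proved, but the top-interval failure alone already invalidates the proposed route.
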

Hence the order complex of $\mathcal{P}_G^{\mathrm{even}}\setminus\{\hat{0},\hat{1}\}$ is homotopy equivalent to the wedge of $|\mu|$ copies of the spheres $S^d$, where $\mu=\mu(\mathcal{P}_G^{\mathrm{even}})$ and $d={\left\lceil\frac{|V(G)|}{2}\right\rceil-2}$.
In fact, the $a$-number and the $b$-number determine each other as follows.

\begin{theorem}\label{thm:abdual}
    For every graph $G$, we have
    \begin{equation}\label{eqn:bwrittenbya}
        b(G) = (-1)^{|V(G)|}\sum_{H:H\sqsubseteq G}a(H)
    \end{equation}
    and
    \begin{equation}\label{eqn:awrittenbyb}
        a(G) = \sum_{H:H\sqsubseteq G}b(H).
    \end{equation}
\end{theorem}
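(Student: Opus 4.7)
My plan is to prove \eqref{eqn:bwrittenbya} by strong induction on $n:=|V(G)|$, and then deduce \eqref{eqn:awrittenbyb} from it by a single application of M\"obius inversion on the Boolean lattice $2^{V(G)}$.

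For the inductive step (the case $n=0$ being immediate), I split into three subcases. If $G$ is disconnected, write $G=G_1\sqcup\cdots\sqcup G_k$ with $k\ge 2$, so each $|V(G_i)|<n$. Induced subgraphs factor as disjoint unions of induced subgraphs of the $G_i$'s, and multiplicativity of $a$ and $b$ (Lemma~\ref{lem:multiplicative}) combined with the inductive hypothesis yields
$$\sum_{H\sqsubseteq G}a(H)=\prod_{i}\sum_{H_i\sqsubseteq G_i}a(H_i)=\prod_{i}(-1)^{|V(G_i)|}b(G_i)=(-1)^{n}b(G).$$
If $G$ is connected with $n$ even, then its single component has even size, so $G$ is not odd and $b(G)=0$; meanwhile $\sum_{H\sqsubseteq G}a(H)=0$ is the defining recursion of $a$ at an even graph, and both sides vanish. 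If $G$ is connected with $n$ odd, then $G$ is not even, so $a(G)=0$ and $\sum_{H\sqsubseteq G}a(H)=\sum_{H\sqsubset G}a(H)$. Using the inductive hypothesis $b(H)=(-1)^{|V(H)|}\sum_{H'\sqsubseteq H}a(H')$ for each $H\sqsubset G$ and the defining recursion $b(G)=-\sum_{H\sqsubset G}b(H)$, interchange the order of summation to obtain
$$b(G)=-\sum_{H'\sqsubset G}a(H')\sum_{H:\,V(H')\subseteq V(H)\subsetneq V(G)}(-1)^{|V(H)|}.$$
Writing $V(H)=V(H')\cup J$ for $J$ a proper subset of $S:=V(G)\setminus V(H')$, the inner sum equals $(-1)^{|V(H')|}\sum_{J\subsetneq S}(-1)^{|J|}=-(-1)^{|V(H')|+|S|}=(-1)^{n+1}$, where I use the standard identity $\sum_{J\subseteq S}(-1)^{|J|}=0$ for nonempty $S$. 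Since $n$ is odd this value is $+1$, hence $b(G)=-\sum_{H'\sqsubset G}a(H')=-\sum_{H\sqsubseteq G}a(H)$, which is \eqref{eqn:bwrittenbya}.

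Having established \eqref{eqn:bwrittenbya} for all graphs, I deduce \eqref{eqn:awrittenbyb} as follows. Applying \eqref{eqn:bwrittenbya} to every induced subgraph $G[J]$ gives $\sum_{I\subseteq J}a(G[I])=(-1)^{|J|}b(G[J])$ for every $J\subseteq V(G)$. M\"obius inversion on the Boolean lattice $2^{V(G)}$ (whose M\"obius function is $\mu(I,J)=(-1)^{|J|-|I|}$) yields
$$a(G[J])=\sum_{I\subseteq J}(-1)^{|J|-|I|}(-1)^{|I|}b(G[I])=(-1)^{|J|}\sum_{I\subseteq J}b(G[I]).$$
Specializing to $J=V(G)$ gives $(-1)^{n}a(G)=\sum_{H\sqsubseteq G}b(H)$. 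Since $a(G)\neq 0$ forces $G$ to be even and hence $n$ to be a sum of even component sizes, one always has $(-1)^{n}a(G)=a(G)$, producing \eqref{eqn:awrittenbyb}.

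The main obstacle is the interchange-of-sums computation in the connected odd subcase: the inner alternating sum must be identified as the uniform value $(-1)^{n+1}$, which requires the reparametrization $V(H)=V(H')\cup J$ and the binomial cancellation $\sum_{J\subseteq S}(-1)^{|J|}=0$ for nonempty $S$. Everything else reduces to the inductive recursions defining $a$ and $b$, multiplicativity, and one application of M\"obius inversion.
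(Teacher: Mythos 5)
Your proof is correct, but your route to \eqref{eqn:bwrittenbya} is genuinely different from the paper's. The paper handles the crucial case of a connected odd graph $G$ topologically: it decomposes $\partial\triangle_G$ into the union of facets $F_I$ with $|I|$ even and those with $|I|$ odd, identifies these (after dualizing and subdividing) with the order complexes of $\mathcal{P}_G^{\mathrm{even}}\setminus\{\hat0,\hat1\}$ and $\mathcal{P}_G^{\mathrm{odd}}\setminus\{\hat0,\hat1\}$, and then combines shellability of $\mathcal{P}_G^{\mathrm{even}}$ (Theorem~\ref{prop:simple-shellable}), the Philip Hall theorem, and Alexander duality on the sphere $\partial\triangle_G$ to conclude $b(G)=\mu(\mathcal{P}_G^{\mathrm{odd}})=\mu(\mathcal{P}_G^{\mathrm{even}})=(-1)^{|V(G)|}\sum_{H\sqsubseteq G}a(H)$; disconnected graphs are then handled by multiplicativity, exactly as you do. You replace all of this by an elementary strong induction: the connected even case is the defining recursion of $a$, and the connected odd case is an interchange of summation plus the binomial cancellation $\sum_{J\subseteq S}(-1)^{|J|}=0$ for $S\neq\emptyset$, which correctly evaluates the inner sum as $(-1)^{n+1}=+1$. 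Your derivation of \eqref{eqn:awrittenbyb} via M\"obius inversion on the Boolean lattice, including the observation that $(-1)^{|V(G)|}a(G)=a(G)$ because $a$ vanishes on non-even graphs, coincides with the paper's. What the paper's argument buys is the extra identity $\mu(\mathcal{P}_G^{\mathrm{odd}})=\mu(\mathcal{P}_G^{\mathrm{even}})$ and its topological interpretation, which the paper exploits elsewhere (e.g.\ for sign determination and for computing $b(G)$ via even subgraphs); what your argument buys is independence from shellability, Alexander duality, and the geometry of $\triangle_G$ altogether, using only the definitions and Lemma~\ref{lem:multiplicative}.
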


\begin{proof}
    Let us prove \eqref{eqn:bwrittenbya} first. If $G$ is a connected even graph, then $b(G)=0$ and $a(G)=-\sum_{H\sqsubset G}a(H)$ from their definitions, and hence \eqref{eqn:bwrittenbya} holds. {The formula \eqref{eqn:bwrittenbya} trivially holds when $|V(G)|=1$.} Now we assume that $G$ is a connected graph with $2k+1$ vertices {for $k\ge 1$}. Recall that every facet of $\triangle_G$ is labeled {by} $I$ such that $G[I]$ is a proper connected induced subgraph and we write that facet by $F_I$. Then we have
    \[
        \partial \triangle_G = \left(\bigcup_{|I|=\text{even}}F_I\right) \cup \left(\bigcup_{|I|=\text{odd}}F_I\right).
    \]
    The former set $\bigcup_{|I|=\text{even}}F_I$ is homotopy equivalent to $\mathcal{P}_G^{\mathrm{even}} \setminus \{\hat{0},\hat{1}\}$, and the latter set $\bigcup_{|I|=\text{odd}}F_I$ is homotopy equivalent to $\mathcal{P}_G^{\mathrm{odd}} \setminus \{\hat{0},\hat{1}\}$. Actually, {since $\triangle_G$ is a simple polytope, the dual of the set $\bigcup_{|I|=\text{even}}F_I$ (respectively, $\bigcup_{|I|=\text{odd}}F_I$) is a simplicial complex and it becomes the order complex of $\mathcal{P}_G^{\mathrm{even}} \setminus \{\hat{0},\hat{1}\}$ (respectively, $\mathcal{P}_G^{\mathrm{odd}} \setminus \{\hat{0},\hat{1}\}$) after suitable subdivisions, see Lemma~4.7 of \cite{CP}.\footnote{Note that this property holds only when $G$ is connected.}}
    Note that $\partial \triangle_G$ is homeomorphic to a sphere of dimension $2k-1$ and $\mathcal{P}_G^{\mathrm{even}}$ is {a} shellable {poset of length $k+1$} by Theorem~\ref{prop:simple-shellable}. Hence  we obtain
    \[
        b(G) = \mu({\mathcal{P}_G^{\mathrm{odd}}}) = \mu({\mathcal{P}_G^{\mathrm{even}}}) = (-1)^{|V(G)|}\sum_{H:H\sqsubseteq G}a(H),
    \]
    where the second identity follows from the Philip Hall theorem\footnote{For any bounded poset $\mathcal{P}$, the reduced Euler characteristic of the order complex of $\mathcal{P}\setminus \{\hat{0},\hat{1}\}$ is equal to the M\"obius invariant $\mu(\mathcal{P})$.} and the Alexander duality\footnote{The Alexander duality says that if $X$ is a compact, locally contractible subspace of a sphere $S^n$, then $\tilde{H}_q(X)\cong \tilde{H}^{n-q-1}(S^n\setminus X)$ for every $q$.} on the sphere $\partial \triangle_G$.

    When $G$ has the connected components $G_1,\ldots,G_\kappa$, then Lemma~\ref{lem:multiplicative} implies
    \[
        \prod_{i=1}^\kappa (-1)^{|V(G_i)|}\sum_{H:H\sqsubseteq G_i}a(H) = (-1)^{|V(G)|}\sum_{H:H\sqsubseteq G}a(H),
    \]
    {which proves \eqref{eqn:bwrittenbya}.}

    Now let us show \eqref{eqn:awrittenbyb}. {Let $\mathcal{P}$ be the poset of all elements $H\sqsubseteq G$. Note that $\mathcal{P}$ is isomorphic to the Boolean algebra, the poset of all subsets of $[n]$.} We apply the M\"obius inversion formula\footnote{See Proposition~3.7.1 of \cite{S2012}.} to~\eqref{eqn:bwrittenbya}. Then we immediately obtain that
\[ a(G) = \sum_{H:H\sqsubseteq G}(-1)^{|V(H)|}b(H)\mu_{\mathcal{P}}(H,G).\]
Since $\mu_{\mathcal{P}}(H,G) = (-1)^{|V(G)|-|V(H)|}$
  for any induced subgraph $H$ of $G$,
 it holds that
   $$a(G) = (-1)^{|V(G)|}\sum_{H:H\sqsubseteq G}b(H).$$
{Note that $a(G)=0$ whenever $|V(G)|$ is odd. Therefore, \eqref{eqn:awrittenbyb} holds.}
\end{proof}

\begin{remark}
In many cases, \eqref{eqn:bwrittenbya} is more efficient to compute the $b$-number than the definition, since usually there are fewer even induced subgraphs than odd ones. In practice one can use $b(G) = \mu({\mathcal{P}_G^{\mathrm{even}}})$ for computation when $G$ is a connected odd graph.
\end{remark}

In fact, the signs of $a(G)$ and $b(G)$ are completely determined by the graph~$G$.

\begin{corollary}\label{cor:absign}
    For a graph $G$, the signs of $a(G)$ and $b(G)$ are determined as follows.
    \begin{enumerate}
        \item If $G$ is even, then  $a(G) = (-1)^{\frac{|V(G)|}{2}}|a(G)|$.
        \item If $G$ is odd, then $b(G)= (-1)^{\frac{|V(G)|+\kappa(G)}{2}}|b(G)|$.
    \end{enumerate}
\end{corollary}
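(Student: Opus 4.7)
The plan is to reduce both (1) and (2) to the case of connected graphs by multiplicativity (Lemma~\ref{lem:multiplicative}), and then read off the sign of the relevant M\"obius invariant directly from the shellability of $\mathcal{P}_G^{\mathrm{even}}$ recorded in Theorem~\ref{prop:simple-shellable}. No new machinery is needed; the work is almost entirely parity bookkeeping.

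For the reduction, suppose $G$ has connected components $G_1,\ldots,G_\kappa$. If $G$ is even then each $G_i$ is even, Lemma~\ref{lem:multiplicative} gives $a(G)=\prod_i a(G_i)$, and the identity $|V(G)|/2=\sum_i |V(G_i)|/2$ makes the sign $(-1)^{|V(G)|/2}$ multiplicative as well, so (1) for $G$ follows from (1) for each $G_i$. Analogously, if $G$ is odd then each $G_i$ is odd connected with $\kappa(G_i)=1$, so $b(G)=\prod_i b(G_i)$ and $(|V(G)|+\kappa(G))/2=\sum_i(|V(G_i)|+1)/2$ reduce (2) to the connected odd case. Thus we may assume $G$ is connected.

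For the connected case I extract the sign as follows. By Theorem~\ref{prop:simple-shellable}, $\mathcal{P}_G^{\mathrm{even}}$ is a pure shellable poset of length $\ell=\lceil |V(G)|/2\rceil$, hence the order complex of $\mathcal{P}_G^{\mathrm{even}}\setminus\{\hat 0,\hat 1\}$ is homotopy equivalent to a wedge of $(\ell-2)$-spheres, and Philip Hall's theorem yields
\[
\mu(\mathcal{P}_G^{\mathrm{even}})=(-1)^{\ell-2}N=(-1)^{\ell}N\qquad\text{for some }N\ge 0.
\]
When $G$ is even with $|V(G)|=2k$, the definition gives $a(G)=\mu(\mathcal{P}_G^{\mathrm{even}})$ and $\ell=k$, so $a(G)=(-1)^k|a(G)|$, which is (1). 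When $G$ is odd with $|V(G)|=2k+1$, the identity $b(G)=\mu(\mathcal{P}_G^{\mathrm{odd}})=\mu(\mathcal{P}_G^{\mathrm{even}})$ already established in the proof of Theorem~\ref{thm:abdual} (via Alexander duality on the sphere $\partial\triangle_G$) together with $\ell=k+1$ gives $b(G)=(-1)^{k+1}|b(G)|$, and since $(|V(G)|+\kappa(G))/2=(2k+1+1)/2=k+1$ this is (2). The trivial base cases $V(G)=\emptyset$ and $G=P_1$ agree by direct inspection.

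The main obstacle, if any, is simply to remember that the sign is most naturally read off from $\mathcal{P}_G^{\mathrm{even}}$ in \emph{both} parts; in the odd case this requires invoking the duality identity $\mu(\mathcal{P}_G^{\mathrm{odd}})=\mu(\mathcal{P}_G^{\mathrm{even}})$ recorded inside the proof of Theorem~\ref{thm:abdual}, since Theorem~\ref{prop:simple-shellable} is stated only for $\mathcal{P}_G^{\mathrm{even}}$. Everything else is a parity count.
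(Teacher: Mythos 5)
Your argument is correct, but it is organized differently from the paper's. The paper does not reprove (1) at all (it is quoted from \cite{CP}), and it proves (2) for arbitrary, possibly disconnected, odd $G$ without any reduction: from \eqref{eqn:bwrittenbya} it gets $\sgn b(G)=(-1)^{|V(G)|+1}\sgn \mu(\mathcal{P}_G^{\mathrm{even}})$, and it pins down $\sgn\mu(\mathcal{P}_G^{\mathrm{even}})$ using the sign-alternation property of the M\"obius function of this poset (cited from \cite{S2012}) together with part (1) applied to a maximal element $H$ of $\mathcal{P}_G^{\mathrm{even}}\setminus\{\hat{1}\}$, the key bookkeeping being $|V(H)|=|V(G)|-\kappa(G)$. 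You instead reduce both parts to connected graphs via Lemma~\ref{lem:multiplicative} and extract the sign from purity and the length $\lceil |V(G)|/2\rceil$ in Theorem~\ref{prop:simple-shellable} via the wedge-of-spheres description and Philip Hall, importing $\mu(\mathcal{P}_G^{\mathrm{odd}})=\mu(\mathcal{P}_G^{\mathrm{even}})$ from the proof of Theorem~\ref{thm:abdual}; this is legitimate, since that identity is established there precisely for connected odd graphs with at least three vertices and you dispose of the null graph and $P_1$ by inspection. Your route buys a self-contained proof of (1) and avoids the alternating-sign citation, at the cost of the connected reduction and of the degenerate small cases (empty proper part, interpreted as a $(-1)$-sphere); the paper's route avoids any reduction by encoding $\kappa(G)$ through the size of a maximal even induced subgraph.
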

\begin{proof}
{Note that (1) is already known in \cite{CP}. Let us prove (2). It is well-known that the M\"obius function of $\mathcal{P}_G^{\mathrm{even}}$ alternates in sign\footnote{See Proposition~3.8.11 of \cite{S2012}.}. Let us write $\sgn x = x/|x|$ for any nonzero real number $x$. By \eqref{eqn:bwrittenbya},
we have
\begin{eqnarray}\label{eq:sign}
&&\sgn b(G) = (-1)^{|V(G)|}\times (-1)\times \sgn \mu(\mathcal{P}_G^{\mathrm{even}}).
\end{eqnarray}
    For a maximal element $H$ of $\mathcal{P}_G^{\mathrm{even}}\setminus\{\hat{1}\}$, $H$ is even and so by (1), we have $\sgn \mu_{\mathcal{P}_G^{\mathrm{even}}}(\hat{0}, H) = (-1)^{\frac{|V(H)|}{2}}$.
Thus the sign of $\mu(\mathcal{P}_G^{\mathrm{even}})$ is equal to that of $(-1)^{\frac{|V(H)|}{2}+1}$.
    From the fact that $|V(H)| = |V(G)|-\kappa(G)$,  \eqref{eq:sign} is equal to
    $$(-1)^{|V(G)|+1+\frac{|V(H)|}{2}+1}=(-1)^{|V(G)|+\frac{|V(G)|-\kappa(G)}{2}}=(-1)^{\frac{3|V(G)|-\kappa(G)}{2}}
    = (-1)^{\frac{|V(G)|+\kappa(G)}{2}},$$
where the last equality is from the fact that $3|V(G)|-\kappa(G)$ and $|V(G)|+\kappa(G)$   have the same parity.}
\end{proof}

{As the Betti numbers of $X^\R(\triangle_G)$ were formulated by the $a$-numbers, now we formulate the Betti numbers of $X^\R(\square_G)$ by the $b$-numbers. We restate the main theorem below, and its proof will be presented in Section~\ref{sec:proof}.}
\begin{theorem}[Theorem~\ref{thm:main:typeI}]
        Let $G$ be a graph.  For any integer $i\ge 0$, the $i$th Betti number of $X^\R(\square_G)$ is
    $$
    \beta^i(X^\R(\square_G))= \sum_{H \sqsubseteq G\atop |V(H)|+\kappa(H)=2i}\left|b(H)\right|.
    $$
\end{theorem}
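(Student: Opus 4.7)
The plan is to apply the Suciu--Trevisan type decomposition of the cohomology of a real toric manifold, in the same spirit as the proof of Theorem~\ref{thm:CP:main} given in \cite{CP}. For a Delzant polytope $P$ of dimension $n$ with mod-$2$ characteristic function $\Lambda:\mathcal F(P)\to\Z_2^n$, this decomposition asserts
\begin{equation*}
\beta^i(X^\R(P))=\sum_{\omega\in\Z_2^n}\tilde\beta^{i-1}(K_\omega),
\end{equation*}
where $K_\omega$ is the full subcomplex of the simplicial complex dual to $\partial P$ on the facets $F$ with $\langle\Lambda(F),\omega\rangle\equiv 1\pmod 2$; the integer version holds under a shellability/torsion-freeness condition that will be verified along the way. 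Specialising to $P=\square_G$ and using~\eqref{eq:Z_2-facet-vector}, for $\omega\in\Z_2^n$ with support $S:=\operatorname{supp}(\omega)\subseteq[n]$ the vertex set of $K_\omega$ consists of all $F_I$ with $I\in\mathcal I_G$ and $|I\cap S|$ odd, together with $F_{\{\bar i\}}$ for $i\in S$, and higher simplices are determined by pairwise intersections since $\square_G$ is flag (Proposition~\ref{prop:faces_meet}).

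I would first show that $K_\omega$ depends on $S$, up to homotopy, only through the induced subgraph $H:=G[S]$. For any $F_I\in K_\omega$ with $I\not\subseteq S$ one has $F_I\cap F_{\{\bar j\}}\neq\emptyset$ whenever $j\in S\setminus I$ by Proposition~\ref{prop:faces_meet}(2), and the $F_{\{\bar j\}}$, $j\in S$, span a common simplex of $K_\omega$; a collapsing argument analogous to the one in \cite{CP} for $\partial\triangle_G$ then lets one deformation retract $K_\omega$ onto a subcomplex $K_\omega'$ supported on $\{F_I:I\in\mathcal I_H,\ |I|\text{ odd}\}\cup\{F_{\{\bar i\}}:i\in S\}$. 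Using the product decomposition $\square_G\cong\square_{G_1}\times\cdots\times\square_{G_\kappa}$, the K\"unneth formula, and the multiplicativity of $b$-numbers (Lemma~\ref{lem:multiplicative}), I would further reduce to the case that $H$ itself is connected.

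For connected $H$, the goal is to identify $K_\omega'$ up to homotopy with a single suspension of the order complex of $\mathcal{P}_H^{\mathrm{even}}\setminus\{\hat 0,\hat 1\}$, the extra suspension coming from the pair consisting of $F_{V(H)}$ (present as a vertex precisely when $|V(H)|$ is odd) and one of the $F_{\{\bar i\}}$ acting as cone apices on complementary halves. When $H$ is even the resulting complex is contractible, giving the correct contribution $0=b(H)$. When $H$ is odd, Theorem~\ref{prop:simple-shellable} together with the Philip Hall theorem (as used in the proof of Theorem~\ref{thm:abdual}) implies that the order complex of $\mathcal{P}_H^{\mathrm{even}}\setminus\{\hat 0,\hat 1\}$ is a wedge of $|\mu(\mathcal{P}_H^{\mathrm{even}})|=|b(H)|$ spheres of dimension $\lceil|V(H)|/2\rceil-2$, so after the single suspension it becomes a wedge of $|b(H)|$ spheres in dimension $i-1$ with $2i=|V(H)|+1=|V(H)|+\kappa(H)$. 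The main obstacle will be the careful dimension bookkeeping in this step: verifying that the extra summand $\kappa(H)$ in $2i=|V(H)|+\kappa(H)$ (absent in the associahedral formula $2i=|V(H)|$) indeed arises from exactly one suspension per connected component of $H$. Once the connected case is established, multiplicativity and K\"unneth assemble the disconnected contributions, and summing $|b(H)|$ over all $H\sqsubseteq G$ with $|V(H)|+\kappa(H)=2i$ yields Theorem~\ref{thm:main:typeI}.
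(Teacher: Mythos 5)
Your skeleton matches the paper's proof up to a point: you invoke the decomposition of Theorem~\ref{formula}, reduce $K^{\mathrm{odd}}_{G,S}$ to a complex depending only on $G[S]$ (this is Lemma~\ref{lem:reduced:subgraph}, proved by exactly the star-removal argument you allude to), and plan to finish with shellability of $\mathcal{P}_H^{\mathrm{even}}$ and Philip Hall. But the crux of your argument is the claim that, for a connected odd $H$, the complex $K^{\mathrm{odd}}_{H}$ is a \emph{single suspension} of the order complex of $\mathcal{P}_H^{\mathrm{even}}\setminus\{\hat0,\hat1\}$, with $F_{V(H)}$ and one $F_{\{\bar i\}}$ acting as ``cone apices on complementary halves.'' This is asserted, not proved, and it is not clear it can be proved directly: the vertices of $K^{\mathrm{odd}}_{H}$ are the odd $I\in\mathcal{I}_H$ together with the $\bar i$'s, while the order complex of $\mathcal{P}_H^{\mathrm{even}}\setminus\{\hat0,\hat1\}$ has as vertices the nonempty proper even (possibly disconnected) vertex subsets, so there is no evident simplicial map between them, let alone a decomposition of $K^{\mathrm{odd}}_H$ into two cones over a common copy of that order complex. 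The paper avoids any such identification: it computes $\tilde\beta^{\ast}(K^{\mathrm{odd}}_{G[S]})$ by Alexander duality inside the simplicial sphere $(\partial\square_{G[S]})^{*}$ against the complementary induced subcomplex $K^{\mathrm{even}}_{G[S]}$ (whose vertices are the \emph{connected} even induced subgraphs), identifies $K^{\mathrm{even}}_{G[S]}$ with the order complex of $\mathcal{P}_{G[S]}^{\mathrm{even}}\setminus\{\hat0,\hat1\}$ via Lemma~4.7 of \cite{CP}, and then uses Theorem~\ref{prop:simple-shellable}, Philip Hall, and \eqref{eqn:bwrittenbya} to get $|b(G[S])|$ in the single degree forced by the duality; that degree computation is precisely where $|V(H)|+\kappa(H)=2i$ comes from. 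Your dimension bookkeeping is consistent with this answer, but consistency of dimensions is not a proof of the claimed homotopy equivalence, so as written this step is a genuine gap.

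Two further points. First, the contractibility of $K^{\mathrm{odd}}_{H}$ for a connected even $H$ is also only asserted; in the paper this is Lemma~\ref{lem:even:contractible} and needs a real argument (iterated removal of stars with contractible links via Lemma~\ref{lem:st}), and it is essential for killing all $S$ with $G[S]$ not odd. Second, your reduction to connected $H$ uses the product decomposition $\square_G\cong\square_{G_1}\times\cdots\times\square_{G_\kappa}$, which only sees the components of $G$; since $G[S]$ can be disconnected even when $G$ is connected, what is actually needed is the join decomposition $K^{\mathrm{odd}}_{G[S]}=K^{\mathrm{odd}}_{G[S_1]}\ast\cdots\ast K^{\mathrm{odd}}_{G[S_{\kappa}]}$ over the components of $G[S]$ (immediate from flagness, Proposition~\ref{prop:faces_meet}); it is this join, not a Künneth argument for $\square_G$, that produces the extra $\kappa(H)$ in the degree count. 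Your proposal becomes the paper's proof once the suspension heuristic is replaced by the Alexander duality step and these two lemmas are actually proved.
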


The following implies that $a(G)$ and $b(G)$ are completely determined by the combinatorial structure of $\square_G$ and $\triangle_G$, respectively.
\begin{corollary}
    For any graph $G$, we have
    \[
        a(G) = \chi(X^\R(\square_G)) = \sum_i (-1)^i h_i(\square_G)
    \]
    and
    \[
        b(G) = (-1)^{|V(G)|}\chi(X^\R(\triangle_G)) = (-1)^{|V(G)|}\sum_i (-1)^i h_i(\triangle_G).
    \]
    where  $(h_0(P),h_1(P),\dotsc,h_n(P))$ is the $h$-vector of the simple polytope $P$.
\end{corollary}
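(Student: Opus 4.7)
The plan is to derive the two identities from Theorems~\ref{thm:CP:main} and~\ref{thm:main:typeI} by inserting them into the alternating Betti sum $\chi = \sum_i (-1)^i \beta^i$, using Corollary~\ref{cor:absign} to strip the absolute values, and then applying Theorem~\ref{thm:abdual} to collapse the result. The half that identifies $\chi(X^\R(P))$ with $\sum_i (-1)^i h_i(P)$ requires no new work: the Jurkiewicz formula $\dim_{\Z_2} H^i(X^\R(P);\Z_2) = h_i(P)$ cited in the introduction gives it immediately, because the Euler characteristic does not depend on the coefficient field.

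For the cube identity, I would apply Theorem~\ref{thm:main:typeI}. For any odd $H \sqsubseteq G$ with $|V(H)|+\kappa(H)=2i$, Corollary~\ref{cor:absign}(2) yields $|b(H)| = (-1)^i b(H)$, so the $(-1)^i$ from the Euler characteristic cancels the $(-1)^i$ coming from the sign of $b(H)$. A short parity check shows that $|V(H)|+\kappa(H)$ is automatically even whenever $H$ is odd (each component has odd order, so summing $\kappa(H)$ odd numbers is congruent to $\kappa(H)$ mod $2$). Putting this together with the fact that $b(H)=0$ for non-odd $H$ gives
\[
\chi(X^\R(\square_G)) \;=\; \sum_{H \sqsubseteq G} b(H),
\]
and by~\eqref{eqn:awrittenbyb} the right hand side equals $a(G)$, completing the first line.

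The associahedron identity is handled in exactly the same way. Starting from Theorem~\ref{thm:CP:main}, Corollary~\ref{cor:absign}(1) gives $|a(H)| = (-1)^i a(H)$ for even $H$ with $|V(H)|=2i$, so the alternating sum telescopes to $\chi(X^\R(\triangle_G)) = \sum_{H \sqsubseteq G} a(H)$. Equation~\eqref{eqn:bwrittenbya} in Theorem~\ref{thm:abdual} then rewrites this sum as $(-1)^{|V(G)|} b(G)$, yielding $b(G) = (-1)^{|V(G)|} \chi(X^\R(\triangle_G))$.

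I do not expect a genuine obstacle here; everything is already in place in the excerpt and the argument is essentially sign bookkeeping. The one item that demands a moment of care is the parity observation above, which ensures that the indices $|V(H)|=2i$ (respectively $|V(H)|+\kappa(H)=2i$) run through the even (respectively odd) induced subgraphs of $G$ exactly once as $i$ varies, so no term in the sum $\sum_{H\sqsubseteq G} a(H)$ or $\sum_{H\sqsubseteq G} b(H)$ is counted twice or missed.
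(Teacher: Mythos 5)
Your proposal is correct and is essentially the paper's own proof: the paper establishes the cube identity by exactly the same chain of identities (Theorem~\ref{thm:main:typeI}, then Corollary~\ref{cor:absign} to remove the absolute values, then Theorem~\ref{thm:abdual}), and it treats the $h_i$ part the same way, via the $\Z_2$-Betti numbers and the field-independence of the Euler characteristic. The only cosmetic difference is that the paper cites \cite{CP} for the associahedron identity, whereas you re-derive it by mirroring the cube argument, which is fine.
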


\begin{proof}
    The second formula is already known in \cite{CP}. The first one is induced by the following chain of identities
    \[
        \chi(X^\R(\square_G)) = \beta^0-\beta^1+\beta^2- \cdots = \sum_{H\sqsubseteq G}(-1)^{\frac{|V(H)|+\kappa(H)}2}|b(H)| = \sum_{H\sqsubseteq G} b(H) = a(G);
    \]
    the first equality is by definition of the Euler characteristic, the second one is by Theorem~\ref{thm:main:typeI}, the third one is by Corollary~\ref{cor:absign}, and the last identity is by Theorem~\ref{thm:abdual}.  In both formulae, the parts containing $h_i$ are shown using the $\Z_2$-Betti numbers of the real toric manifolds and the fact that the Euler characteristic is independent from the choice of coefficient field.
\end{proof}

{From now on, we will see a significant application of our main result.
Let us take a look into a result in~\cite{MP2017} again}.
By Proposition~\ref{prop:spider:octopus}, if $G$ is an octopus and $L(G)$ is the corresponding spider, the line graph of $G$, then $h_i(\triangle_G)=h_i(\square_{L(G)})$ for any $i$, and hence the $\Z_2$-Betti numbers of the real toric manifolds $X^\R(\triangle_G)$ and $X^\R(\square_{L(G)})$ are the same. We can show that this phenomenon holds for the Betti numbers of $X^\R(\triangle_G)$ and $X^\R(\square_{L(G)})$ for any tree $G$ and its line graph $L(G)$.
We also note that the family of the line graphs of trees is one of important graph families in graph theory, so called claw-free block graphs.
In the rest of the section, we will prove the following {by} using Theorem~\ref{thm:main:typeI}, the main result.

\begin{theorem}[Theorem~\ref{prop:line:graph}]
For a forest $G$, the real toric manifolds $X^\R(\triangle_G)$ and $X^\R(\square_{L(G)})$ have the same Betti numbers, that is, for any integer $i \ge 0$, {we have}
$$\beta^i(X^\R(\triangle_{G}))=  \beta^i(X^\R(\square_{L(G)})).$$
\end{theorem}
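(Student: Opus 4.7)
The plan is to reduce the problem to a combinatorial identity on trees comparing $a$-values of even induced subgraphs with $b$-values associated to line graphs of subtrees, and then prove that identity.

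First, I would reduce to the case where $G$ is a single tree. If $G$ is a forest with tree components $G_1,\dots,G_\kappa$, then $L(G)=\bigsqcup_j L(G_j)$, and both polytopes split as $\triangle_G=\prod_j \triangle_{G_j}$ and $\square_{L(G)}=\prod_j\square_{L(G_j)}$; the corresponding real toric manifolds are products of the factors. The rational K\"unneth formula makes the Poincar\'e polynomials multiplicative, so the theorem for $G$ follows once proved for each tree $G_j$. Henceforth assume $G=T$ is a tree.

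Next, I would rewrite the right-hand side using the correspondence between induced subgraphs $K\sqsubseteq L(T)$ and edge subsets $S\subseteq E(T)$ given by $S=V(K)$. Set $V(S):=\bigcup_{e\in S} e$ and let $T_S$ denote the subforest of $T$ with vertex set $V(S)$ and edge set $S$. A short count gives $|V(K)|+\kappa(K)=|V(S)|$, and $K=L(T)[S]$ is odd precisely when every connected component of $T_S$ has even order. By Lemma~\ref{lem:multiplicative}, $b(L(T)[S])=\prod_i b(L(T_i))$ as $T_i$ runs over the components of $T_S$. Grouping the right-hand sum by $V:=V(S)\subseteq V(T)$ and invoking Theorem~\ref{thm:CP:main} and Theorem~\ref{thm:main:typeI}, the Betti number equality reduces to proving, for every $V\subseteq V(T)$,
\[
|a(T[V])| \;=\; \sum_{\substack{S\subseteq E(T)\\ V(S)=V,\; T_S\text{ has even components}}}\prod_i |b(L(T_i))|.
\]
When $T[V]$ is not even both sides vanish, since a component of odd order in $T[V]$ cannot be partitioned by even-sized subtrees. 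When $T[V]$ is even, the multiplicativity of $a$ and $b$ over the components of $T[V]$ (and a corresponding factorization of admissible $S$) reduces the task to the following \emph{core identity} for an even tree $T'$:
\[
|a(T')| \;=\; \sum_{\pi}\;\prod_{T_i\in\pi} |b(L(T_i))|,
\]
where $\pi$ ranges over all partitions of $V(T')$ into subtrees each of even order.

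The main obstacle is to prove this core identity. I would attempt induction on $|V(T')|$; by Corollary~\ref{cor:absign} this is equivalent to the signed form $a(T')=\sum_\pi\prod_i b(L(T_i))$. Writing $a(T')=\sum_{V'\subseteq V(T')} b(T'[V'])$ via Theorem~\ref{thm:abdual} recasts the left side as a sum over odd induced subforests of $T'$, and the inductive step would fix a leaf $\ell$ of $T'$ together with its unique neighbor $v$: every even subtree partition must group $\ell$ and $v$ in the same part (a lone $\ell$ would constitute an odd singleton), while odd induced subforests are analyzed according to whether they contain $\ell$ and/or $v$. The delicate point is that neither side satisfies a clean deletion/contraction recursion, so careful bookkeeping is needed; the reliance on $T'$ being a tree is essential, reflecting the theorem's restriction to forests and the special structure of line graphs of trees.
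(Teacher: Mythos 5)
Your reduction is sound and in fact mirrors the paper's own route: the translation between induced subgraphs $K\sqsubseteq L(T)$ and edge sets $S\subseteq E(T)$, the count $|V(K)|+\kappa(K)=|V(S)|$, the observation that $L(T)[S]$ is odd exactly when all components of $T_S$ are even, and the vanishing of both sides when $T[V]$ is not even are precisely Lemma~\ref{for:a-b:tree:0} and the setup of Proposition~\ref{prop:line:graph:a-b}. Your ``core identity'' $|a(T')|=\sum_\pi\prod_i|b(L(T_i))|$ is exactly that proposition: the spanning subgraphs in $\mathcal{S}(T')$ having a component of odd order contribute $0$ because the corresponding line graph has an even component, so the paper's sum over $\mathcal{S}(T')$ and your sum over even-subtree partitions coincide; your sign comparison via Corollary~\ref{cor:absign} is likewise how one passes between the signed and unsigned forms.

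However, the argument stops at the decisive point. Once the bookkeeping above is done, the core identity \emph{is} the theorem, and you do not prove it: you only propose an induction on $|V(T')|$ anchored at a leaf $\ell$ and its neighbor $v$, and you yourself flag that the two sides admit no clean deletion/contraction recursion and that the case analysis is unresolved. That is a genuine gap, not a routine verification---the partitions grouped by the part containing $\{\ell,v\}$ and the odd induced subforests grouped by containment of $\ell,v$ do not obviously recombine into the inductive hypothesis. There is a clean recursion, but it runs through the $b$-number of the line graph rather than through a leaf: for an even forest $G$ one writes $b(L(G))=-\sum_{L\sqsubset L(G)}b(L)$ and splits the sum according to whether the edge set $V(L)$ covers $V(G)$. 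The covering terms, together with $b(L(G))$ itself, assemble into $\sum_{H\in\mathcal{S}(G)}b(L(H))$, while each non-covering term is handled by the induction hypothesis applied to the proper induced subgraph of $G$ spanned by those edges (both sides being zero when that subgraph is not even), so the remainder becomes $-\sum_{H\sqsubset G}a(H)=a(G)$. Supplying this, or some other complete proof of the core identity, is what your write-up still needs.
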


Note that the theorem above does not hold in general. {For example, the line graph of a cycle $C_n$ is isomorphic to $C_n$ itself, but the Betti numbers of $X^\R(\triangle_{C_n})$ are different from those of $X^\R(\square_{C_n})$, see Corollary~\ref{cor:number} and Corollary~\ref{cor:bofcycle}.}
Before proving Theorem~\ref{prop:line:graph}, we will see an interesting identity which shows a new relationship between {the} $a$- and $b$-numbers.
The following lemma collects simple observations. A \textit{spanning subgraph} $H$ of $G$ is a subgraph of $G$ such that $V(H)=V(G)$.
For any graph $G$, let $\mathcal{S}(G)$ be the set of all spanning subgraphs of a graph $G$ without isolated vertices.

\begin{lemma}\label{for:a-b:tree:0}
For a forest $G$, the following hold.
\begin{itemize}
\item[(1)] For each even subgraph $H$ of $G$, $L(H)$ is an odd subgraph of $L(G)$.
\item[(2)] For each $H\in \mathcal{S}(G)$, $|V(L(H))|+\kappa(L(H))=|V(G)|.$
\end{itemize}
\end{lemma}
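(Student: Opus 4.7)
The plan is to prove both items by reducing them to two elementary structural facts: any subgraph of a forest is itself a forest (so its connected components are trees), and a tree on $n\ge 2$ vertices has exactly $n-1$ edges whose line graph is connected. With these in hand, both (1) and (2) become short counting arguments.

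First I would handle (1). Let $H$ be an even subgraph of the forest $G$. Then $H$ is also a forest, and its connected components $T_1,\ldots,T_k$ are trees with $|V(T_i)|$ even, hence $|V(T_i)|\ge 2$; in particular every $T_i$ contains at least one edge. Because edges of $H$ lying in different components of $H$ share no endpoint, the line graph splits as a disjoint union $L(H)=L(T_1)\sqcup\cdots\sqcup L(T_k)$, and each $L(T_i)$ is connected (being the line graph of a connected graph with at least one edge) with
\[
    |V(L(T_i))| \;=\; |E(T_i)| \;=\; |V(T_i)|-1,
\]
which is odd. Therefore every connected component of $L(H)$ has an odd number of vertices, i.e.\ $L(H)$ is odd.

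For (2), let $H\in\mathcal{S}(G)$ with connected components $T_1,\ldots,T_k$. Since $H$ is a spanning subgraph of the forest $G$ with no isolated vertices, each $T_i$ is a tree on at least $2$ vertices. The same decomposition $L(H)=L(T_1)\sqcup\cdots\sqcup L(T_k)$ applies and each $L(T_i)$ is connected, so $\kappa(L(H))=k=\kappa(H)$. Counting vertices of $L(H)$ directly gives
\[
    |V(L(H))| \;=\; |E(H)| \;=\; \sum_{i=1}^{k}\bigl(|V(T_i)|-1\bigr) \;=\; |V(H)|-\kappa(H) \;=\; |V(G)|-\kappa(H),
\]
using that $H$ is spanning. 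Adding the two identities yields $|V(L(H))|+\kappa(L(H))=|V(G)|$.

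There is essentially no substantive obstacle here; both assertions are immediate bookkeeping once the forest hypothesis forces every component of $H$ to be a tree. The only points that require attention are the reasons why no component of $L(H)$ is empty or gets merged: in (1) this is ensured by the evenness of $H$ (each component has at least two vertices and hence at least one edge), and in (2) by the no-isolated-vertices condition built into the definition of $\mathcal{S}(G)$.
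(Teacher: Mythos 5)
Your proposal is correct and follows essentially the same route as the paper: both reduce to the facts that every component of a subgraph of a forest is a tree (so an even component has an odd number of edges), that $|V(L(H))|=|E(H)|$, and that $\kappa(L(H))=\kappa(H)$ when $H$ has no isolated vertices, with your component-by-component counting just making the paper's bookkeeping explicit.
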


\begin{proof}
If $H$ is an even subgraph of  $G$, then
each component of $H$ has an odd number of edges and so each component of $L(H)$ has an odd number of vertices, which implies that  $L(H)$ is odd. Thus (1) holds.

Take any $H\in \mathcal{S}(G)$.
Since $H$ is also a forest, we have $|E(H)|+\kappa(H)=|V(H)|$. Note that
    $|V(L(H))|=|E(H)|$ and  $|V(H)|=|V(G)|$.
Since $H$ has no isolated vertex, $\kappa(L(H))=\kappa(H)$. Thus (2) follows.
\end{proof}

The following proposition is not only for proving Theorem~\ref{prop:line:graph} but also for providing a significant observation in a relationship between the $a$- and $b$-numbers.

\begin{proposition}\label{prop:line:graph:a-b}
For any even forest $G$,
$$ a(G)=\sum_{H\in\mathcal{S}(G)} b(L(H)) \qquad \text{ and  }\qquad |a(G)|=\sum_{H\in\mathcal{S}(G)} |b(L(H))|.$$
\end{proposition}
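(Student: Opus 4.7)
The strategy is to prove by induction on $|V(G)|$ the slightly stronger claim that $\sum_{H \in \mathcal{S}(G)} b(L(H)) = a(G)$ holds for every forest $G$. When $G$ is not even, both sides vanish: the right side by the definition of $a$, and the left side because $G$ being a forest forces each $H \in \mathcal{S}(G)$ to decompose as a disjoint union of induced subtrees $G[V_i]$, one per part of a partition of $V(G)$ into subsets of size at least $2$; for $b(L(H))$ to be nonzero every $|V_i|$ would have to be even, which is impossible inside a component of $G$ of odd order. The base case $|V(G)| \le 1$ is immediate.

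For the inductive step with $G$ an even forest, the key device is a double count of
\[
\Sigma \;:=\; \sum_{F \subseteq E(G)} b(L(G)[F]).
\]
Since $V(L(G)) = E(G)$ and the induced subgraphs of $L(G)$ are exactly the $L(G)[F]$, applying Theorem~\ref{thm:abdual} to $L(G)$ gives $\Sigma = a(L(G))$; but because $G$ is even, each component of $L(G)$ has $n_i-1$ vertices with $n_i\ge 2$ even, hence odd, so $L(G)$ is not even and $\Sigma = 0$. On the other hand, if $H_F$ denotes the spanning subgraph of $G$ with edge set $F$, then $L(H_F)=L(G)[F]$ because isolated vertices of $H_F$ do not affect the line graph, and grouping the sum by the set $I = V(G)\setminus \mathrm{Iso}(H_F)$ of non-isolated vertices yields
\[
\Sigma \;=\; \sum_{I \subseteq V(G)} \sum_{H' \in \mathcal{S}(G[I])} b(L(H')).
\]
By the inductive hypothesis the inner sum equals $a(G[I])$ for every $I \subsetneq V(G)$, while the defining recurrence of $a$ applied to the even graph $G$ gives $\sum_{I \subseteq V(G)} a(G[I]) = 0$. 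Thus the right-hand side equals $-a(G) + \sum_{H \in \mathcal{S}(G)} b(L(H))$, and equating with $\Sigma = 0$ produces the first identity.

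For the absolute-value version, it suffices to check that every nonzero summand $b(L(H))$ carries the same sign as $a(G)$. If $b(L(H))\neq 0$, then $H$ has all components of even order and no isolated vertex, so the components of $L(H)$ are in bijection with those of $H$; consequently $|V(L(H))|+\kappa(L(H)) = (|V(G)|-\kappa(H)) + \kappa(H) = |V(G)|$, and Corollary~\ref{cor:absign} fixes the sign of $b(L(H))$ as $(-1)^{|V(G)|/2}$, matching the sign of $a(G)$ given by the same corollary. The main obstacle is the identification of the ``boundary'' term $a(L(G))$ arising in the double count: it is precisely the evenness of $G$ that forces $L(G)$ to be non-even, so this is exactly where the hypothesis of the proposition is used to close the induction.
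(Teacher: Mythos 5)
Your proof is correct and follows essentially the same route as the paper: an induction on $|V(G)|$ that exploits the vanishing of $\sum_{L\sqsubseteq L(G)} b(L)$, regroups the induced subgraphs of $L(G)$ according to the vertex support in $G$ of the corresponding edge sets (handling non-even supports by the parity argument), and settles the absolute-value statement via Corollary~\ref{cor:absign} together with $|V(L(H))|+\kappa(L(H))=|V(G)|$. The only cosmetic difference is that you derive this vanishing from Theorem~\ref{thm:abdual} and the non-evenness of $L(G)$ (so $a(L(G))=0$), whereas the paper derives the same identity from the oddness of $L(G)$ and the defining recurrence of the $b$-number.
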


\begin{proof} Note that from the first equality, the second one follows immediately, since $a(G)$ and $b(L(G))$'s have the same sign by Corollary~\ref{cor:absign} and (2) of Lemma~\ref{for:a-b:tree:0}.
We prove the first equality by induction on $|V(G)|$. It is clear for $|V(G)|=2$. Now assume that the proposition is true for any even forest with at most $n-2$ vertices. Now take any even forest $G$ with $n$ vertices. By (1) of Lemma~\ref{for:a-b:tree:0}, $L(G)$ is an odd graph. Thus by the definition of {the} $b$-number,
    \begin{equation*}\label{eq:a-b-1}
        b(L(G))=-\sum_{L\colon L\sqsubset L(G)} b(L).
      \end{equation*}
For each subgraph $L$ of $L(G)$, we denote by $H_L$ the {minimal subgraph of $G$ whose edges are the elements in $V(L)$, that is, $E(H_L)=V(L)$.}
Then
\begin{equation*}
         b(L(G)) =-
         \sum_{L\colon L\sqsubset L(G)\atop V(H_L)=V(G)} b(L) -\sum_{L\colon L\sqsubset L(G)\atop {V(H_L)\subsetneq V(G)}} b(L).
\end{equation*}
Hence we have
    \begin{align}\label{eq:a-b-2}
        b(L(G))+\sum_{L\colon L\sqsubset L(G) \atop V(H_L)=V(G)} b(L) &= -\sum_{L\colon L\sqsubset L(G)\atop V(H_L)\subsetneq V(G)} b(L).
    \end{align}
    Then since for each $L\sqsubset L(G)$, any component of $H_L$ is not an isolated vertex,
        the left-hand-side of~\eqref{eq:a-b-2} is equal to $\sum_{H\in\mathcal{S}(G)} b(L(H))$ by definition.

We will show that the right-hand-side of~\eqref{eq:a-b-2} is equal to $a(G)$.
If $H$ is even  and $|V(H)|\le n-2$, then it follows from the induction hypothesis that
\begin{align}\label{eq:a-b-3}
a(H)&=\sum_{L\colon L\sqsubseteq L(H)\atop H_{L}\in\mathcal{S}(H)} b(L).
    \end{align}
Even if  $H$ is not even, we still have the same identity \eqref{eq:a-b-3}.
To see why, suppose that $H$ is not even.
Then  $a(H)=0$ and $H$ has a component with an odd number of vertices.
Since  $H_{L}\in \mathcal{S}(H)$ and so $H_{L}$ has no isolated vertex, $H_{L}$ must have a component with an even number of edges. Thus $L(H_{L})=L$ has a component with an even number of vertices, and so $b(L)=0$.
Therefore, the left and right hand sides of \eqref{eq:a-b-3} are equal to 0.

Thus the right-hand-side of~\eqref{eq:a-b-2} is equal to
$$-\sum_{H\colon H\sqsubset G} \quad \sum_{L\colon L\sqsubseteq L(H)\atop H_{L}\in\mathcal{S}(H)} b(L) =-\sum_{H\colon H\sqsubset G}a(H) =a(G)$$
where the first equality is from  \eqref{eq:a-b-3} and  the last one is from the fact that $G$ is even. It proves the first equality, and so completes the proof.
\end{proof}

By Theorems~\ref{thm:CP:main} and~\ref{thm:main:typeI}, and Proposition~\ref{prop:line:graph:a-b}, we can prove Theorem~\ref{prop:line:graph} as follows.
\begin{proof}[Proof of Theorem~\ref{prop:line:graph}]
\begin{equation*}
        \begin{split}
            \beta^i(X^\R(\triangle_G))&=\sum_{H\colon H\sqsubseteq G\atop |V(H)|=2i} |a(H)| \qquad \quad \qquad\qquad\text{ (by Theorem~\ref{thm:CP:main})}\\
            &=\sum_{H\colon H\sqsubseteq G\atop |V(H)|=2i}\sum_{H'\in\mathcal{S}(H)} |b(L(H'))| \qquad\text{ (by~Proposition~\ref{prop:line:graph:a-b})}\\
            &=\sum_{L\colon L\sqsubseteq L(G)\atop |V(L)|+\kappa(L)=2i} |b(L)| \quad \qquad\qquad\quad\text{ (by~Lemma~\ref{for:a-b:tree:0})}\\
            &=\beta^i(X^\R(\square_{L(G)}))\qquad\qquad\qquad\qquad {\text{(by Theorem~\ref{thm:main:typeI})}}.
        \end{split}
    \end{equation*}
\end{proof}

\section{Proof of Theorem~\ref{thm:main:typeI}}\label{sec:proof}

In this section, we first prepare some definitions and known results to prove our main theorem, and then give the proof of Theorem~\ref{thm:main:typeI}.

\bigskip

\noindent{\textbf{The cohomology of a real toric manifold.}} \
{We present a result on
the cohomology groups of  a real toric manifold introduced in \cite{CaiChoi,CP2,CP3,ST2012,Tre2012}.}
Let $P$ be a Delzant polytope of dimension~$n$ and let $\cF(P)=\{F_1,\ldots,F_m\}$ be the set of facets of $P$. Then the primitive outward normal vectors of $P$ can be understood as a function~$\phi$ from $\cF(P)$ to $\Z^n$, and the composition map $\lambda \colon \cF(P) \stackrel{\phi}{\rightarrow} \Z^n \stackrel{\text{mod $2$}}{\longrightarrow} \Z_2^n$ is called the (mod $2$) \emph{characteristic function} over $P$.  Note that $\lambda$ can be represented by a $\Z_2$-matrix $\Lambda_P$ of size $n \times m$ as
    $$
    \Lambda_P = \begin{pmatrix}
      \lambda(F_1) & \cdots & \lambda(F_m)
    \end{pmatrix},
    $$ where the $i$th column of $\Lambda_P$ is $\lambda(F_i) \in \Z_2^n$.
    For $\omega \in \Z_2^m$, we define $P_\omega$ to be the union of facets $F_j$ such that the $j$th entry of $\omega$ is nonzero.
    Then the following holds:
  \begin{theorem}[\cite{ST2012,Tre2012}]\label{formula}
    Let  $P$ be a Delzant polytope of dimension $n$. Then the $i$th Betti number of the real toric manifold $X^\R(P)$ is given by
        $$
        \beta^i (X^\R(P)) = \sum_{S\subseteq [n]} \tilde{\beta}^{i-1}(P_{\omega_S}),
        $$
        where $\omega_S$ is the sum of the $k$th rows of $\Lambda_P$ for all $k\in S$.
    \end{theorem}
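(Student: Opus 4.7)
The proof strategy is to apply the Suciu--Trevisan formula (Theorem~\ref{formula}) to the Delzant polytope $\square_G$. Using equation~\eqref{eq:Z_2-facet-vector}, the $\Z_2$-characteristic vector of a facet $F_I$ of $\square_G$ is $\sum_{i\in I}\mathbf{e}_i$ when $I\in\mathcal{I}_G$, and $\mathbf{e}_i$ when $I=\{\bar i\}$. Hence for each $S\subseteq[n]=V(G)$, the subcomplex $\square_{G,\omega_S}$ is the union of the facets $F_I$ with $I\in\mathcal{I}_G$ and $|I\cap S|$ odd, together with $F_{\{\bar i\}}$ for every $i\in S$. The key intermediate claim I would establish is that, for every $S$,
\[
\tilde\beta^{j}(\square_{G,\omega_S})=\begin{cases}|b(G[S])| & \text{if } j=\tfrac{|S|+\kappa(G[S])}{2}-1,\\ 0 & \text{otherwise,}\end{cases}
\]
with the convention $\tilde\beta^{-1}(\emptyset)=1$ handling the case $S=\emptyset$. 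Since the $2^n$ subsets $S\subseteq[n]$ are in bijection with the induced subgraphs $H=G[S]\sqsubseteq G$, and since $b(G[S])=0$ when $G[S]$ is not odd (in which case the corresponding Suciu--Trevisan term must also vanish), summing the above over $S$ yields the asserted formula.

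To compute $\tilde\beta^*(\square_{G,\omega_S})$, the plan is to associate with each $S$ a bounded poset $\mathcal{P}_{G,S}$ whose order complex on $\mathcal{P}_{G,S}\setminus\{\hat 0,\hat 1\}$ is homotopy equivalent (after suitable subdivisions) to $\square_{G,\omega_S}$, in parallel with the argument for $\triangle_G$ used in the proof of Theorem~\ref{thm:abdual} via Lemma~4.7 of \cite{CP}. Flagness of $\square_G$ (Proposition~\ref{prop:faces_meet}) guarantees that the nerve of the facet collection indexed by the elements of $\mathcal{P}_{G,S}$ faithfully records the homotopy type of its union, so the reduction is not artificial. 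A natural candidate for $\mathcal{P}_{G,S}$ takes as its proper elements those subsets $I\subseteq[n]$ for which every connected component of $G[I]$ has odd intersection with $S$, ordered by inclusion. When $G$ is odd and $S=V(G)$ this recovers $\mathcal{P}_G^{\mathrm{odd}}$ from Section~\ref{sec:main_results}, whose M\"obius invariant is $b(G)$; for general $S$, letting $G[S]$ have connected components $C_1,\ldots,C_\kappa$ (each of which must be odd to give a nonzero contribution), multiplicativity of $b$ (Lemma~\ref{lem:multiplicative}) reduces the computation to the connected case.

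The main obstacle is proving that $\mathcal{P}_{G,S}$ is pure shellable of length $\tfrac{|S|+\kappa(G[S])}{2}$, the cube analog of Theorem~\ref{prop:simple-shellable}. Pure shellability forces the order complex to have the homotopy type of a wedge of top-dimensional spheres, giving concentration of the reduced cohomology in a single degree, and the Philip Hall theorem then identifies the rank as $|\mu(\mathcal{P}_{G,S})|=|b(G[S])|$. I expect the shellability argument to proceed by induction on $|V(G)|+|S|$: when $G$ or $G[S]$ decomposes into components, $\mathcal{P}_{G,S}$ factors as a product of the component posets, and multiplicativity of the M\"obius invariant together with Lemma~\ref{lem:multiplicative} produces the total length $\sum_k\tfrac{|V(C_k)|+1}{2}=\tfrac{|V(H)|+\kappa(H)}{2}$, explaining the extra $\kappa(H)$ in the exponent of the theorem; for the connected, odd base case the shelling order from the $a$-number analysis in \cite{CP} should adapt, once the additional ``bar'' facets $F_{\{\bar i\}}$ are handled separately as a distinct feature of the cube setting. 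With this in hand, summing the Suciu--Trevisan contributions $|b(G[S])|$ over all $S\subseteq[n]$ with $\tfrac{|S|+\kappa(G[S])}{2}=i$ yields Theorem~\ref{thm:main:typeI}.
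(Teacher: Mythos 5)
You were asked about Theorem~\ref{formula}, the Suciu--Trevisan formula $\beta^i(X^\R(P))=\sum_{S\subseteq[n]}\tilde\beta^{i-1}(P_{\omega_S})$ for an arbitrary Delzant polytope $P$. Your proposal does not prove this statement: its very first step is to \emph{apply} Theorem~\ref{formula} to $\square_G$, and everything that follows is a sketch of Theorem~\ref{thm:main:typeI}. As an argument for the stated theorem it is therefore circular --- the statement is an input, not an output, of your reasoning. The paper itself gives no proof of Theorem~\ref{formula}; it is quoted from \cite{ST2012,Tre2012}, where it is obtained by realizing $X^\R(P)$ as a $\Z_2^n$-cover of a generalized Davis--Januszkiewicz space and decomposing the cohomology of that cover into pieces indexed by the characters $\omega_S$, each piece being computed by the full subcomplex $P_{\omega_S}$. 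None of that machinery (covers, transfer/decomposition by characters, identification of the summands with full subcomplexes) appears in your write-up, so the actual content of the theorem is nowhere addressed.

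Taken instead as a proposal for Theorem~\ref{thm:main:typeI}, your outline is broadly reasonable but diverges from the paper at the decisive step and leaves its hardest ingredient unproved. The paper does not introduce an ``odd'' poset $\mathcal{P}_{G,S}$ and prove it shellable. It shows (Lemmas~\ref{lem:reduced:subgraph} and~\ref{lem:even:contractible}) that the dual complex $K^{\mathrm{odd}}_{G,S}$ of $(\square_G)_{\omega_S}$ is homotopy equivalent to $K^{\mathrm{odd}}_{G[S]}$ and is contractible whenever some component of $G[S]$ is even, splits over components via a simplicial join, and then applies Alexander duality inside the simplicial sphere $(\partial\square_{G[S]})^*$ to replace $K^{\mathrm{odd}}_{G[S]}$ by $K^{\mathrm{even}}_{G[S]}$, whose reduced homology is controlled by the already-established shellability of $\mathcal{P}_{G[S]}^{\mathrm{even}}$ (Theorem~\ref{prop:simple-shellable}) together with \eqref{eqn:bwrittenbya}. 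Your plan instead hinges on a new purity-and-shellability theorem for the poset of all $I$ such that every component of $G[I]$ meets $S$ oddly, which you only ``expect'' to hold; that is exactly the content one would have to supply, and it is not obviously easier than the duality route the paper takes. Until that lemma is proved (or replaced by the Alexander duality argument), the computation of $\tilde\beta^{j}((\square_G)_{\omega_S})$ --- the heart of the matter --- is missing.
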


    It is shown in~\cite{CaiChoi} that the cohomology group of a real toric manifold $X^\R(P)$ is completely determined by the reduced cohomology groups of $P_{\omega_S}$'s and the $h$-vector of $P$. In particular, if  $\tilde{H}^\ast(P_{\omega_S})$ is torsion-free for every $S\subseteq [n]$, then the cohomology group of $X^\R(P)$ is
    \begin{equation}\label{eq:CaiChoi}
        H^{i}(X^\R(P))\cong \Z^{\beta^i}\oplus \Z_2^{h_i-\beta^i},
    \end{equation} where $\beta^i$ is the $i$th Betti number of $X^\R(P)$ and $(h_0,h_1,\ldots,h_n)$ is the $h$-vector of $P$.

\bigskip
\noindent\textbf{The $\boldsymbol{\Z}_{\mathbf{2}}$-characteristic matrix of the real toric manifold $\boldsymbol{X^\R(\square_G)}$.}
Let $G$ be a graph with the vertex set $[n]$. Recall that
$\cF(\square_G)=\{F_I\mid I\in\mathcal{I}_G \text{ or } I\text{ is a singleton subset of }[\bar{n}]\}$. It follows from~\eqref{eq:Z_2-facet-vector} that the  $(\mathrm{mod}\,\,{2})$  characteristic function $\lambda\colon\cF(\square_G)\to \Z_2^n$ is given by
\begin{equation*}
    \lambda(F_I):=\begin{cases}
        \sum_{i\in I}\mathbf{e}_i&\text{ if }I\in\mathcal{I}_G,\\
        \mathbf{e}_i&\text{ if }I=\{\bar{i}\}  \text{ for some }i\in [n].
    \end{cases}
\end{equation*}  Let $\Lambda_G$ be the $\Z_2$-characteristic matrix of $\square_G$. Then $\Lambda_G$ is of size $n\times (|\mathcal{I}_G|+n)$.

\bigskip
\noindent\textbf{Simplicial complex $\mathbf{K_S^{\mathrm{odd}}}$ dual to $\boldsymbol{(\square_{G})_{{\omega}_{S}}}$.} \
Let   $S$ be a subset of $[n]$, and let $\omega_S$ be the sum of the $k$th rows of $\Lambda_G$ for all $k\in S$.
Then for each facet $F_I$, the $I$-entry of $\omega_S$ is nonzero if and only if either $I=\{\bar{i}\}$ for some $i\in S$ or $I\in \mathcal{I}_G$ such that $|I\cap S|$  is odd.
Hence $(\square_G)_{\omega_S}$ is the union of facets $F_I$ of $\square_G$, where the union is taken over all $I$ satisfying that
either $I=\{\bar{i}\}$ for some $i\in S$ or $I\in\mathcal{I}_G$ such that $|I\cap S|$ is odd.
We let $K^{\mathrm{odd}}_{G,S}$ be the dual of $(\square_G)_{\omega_S}$. Then $K^{\mathrm{odd}}_{G,S}$ is a simplicial complex since $\square_G$ is a simple polytope. We write  $K^{\mathrm{odd}}_G$ instead of $K^{\mathrm{odd}}_{G,[n]}$.
Note that the set of vertices\footnote{Note that a vertex is used for two meanings, one is for a graph and the other is for a simplicial complex. } of $K^{\mathrm{odd}}_{G,S}$ is
 $$\{ I\in \mathcal{I}_G \mid |I\cap S|\text{ is odd} \}  \cup  \{\bar{i}\mid i\in S\}. $$
Figure~\ref{ex:mainex2} shows examples for simplicial complexes $K^{\mathrm{odd}}_{G,S}$ and $K^{\mathrm{odd}}_{G[S]}$.
Since $(\square_G)_{\omega_S}$ and its dual $K_{G,S}^{\mathrm{odd}}$ have the same homotopy type, by Theorem~\ref{formula}, we have
\begin{eqnarray}\label{eq:common0}
       \beta^i (X^\R(\square_G))&=& \sum_{S\subseteq [n]} \tilde{\beta}^{i-1}(K^{\mathrm{odd}}_{G,S}).
\end{eqnarray}

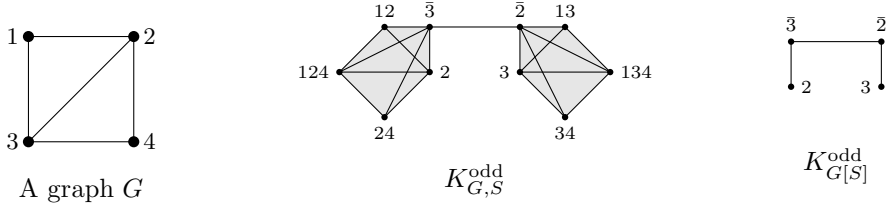
\begin{figure}[h]
    \begin{center}
 \begin{subfigure}{.3\textwidth}
        \centering
        \begin{tikzpicture}[scale=.7]
        \fill (0,3) circle (0pt);
        	\fill (0,0) circle(3pt);
        	\fill (2,0) circle(3pt);
        	\fill (0,2) circle(3pt);
        	\fill (2,2) circle(3pt);
        	\draw (0,0)--(0,2)--(2,2)--cycle;\draw (2,2)--(2,0)--(0,0);
        	\draw (-0.3,0) node{\footnotesize$3$};
        	\draw (2.3,0) node{\footnotesize$4$};
        	\draw (-0.3,2) node{\footnotesize$1$};
        	\draw (2.3,2) node{\footnotesize$2$};
        \end{tikzpicture}
        \caption*{A graph $G$}
    \end{subfigure}
    \begin{subfigure}{.3\textwidth}
    \centering
    \begin{tikzpicture}[scale=.6]
        \path (0,0) coordinate (2)
        (0,1) coordinate (-3)
        (2,1) coordinate (-2)
        (-2,0) coordinate (124)
          (-1,1) coordinate (12)
             (-1,-1) coordinate (24)
        (2,0) coordinate (3)    (4,0) coordinate (134)
        (3,1) coordinate (13)     (3,-1) coordinate (34);
                \fill[gray!20] (2)--(12)--(124)--(2)--cycle;
        \fill[gray!20] (2)--(24)--(124)--(12)--(-3)--(2)--cycle;
        \fill[gray!20] (3)--(34)--(134)--(13)--(-2)--(3)--cycle;
        \fill (2) circle (2pt) (12) circle (2pt)  (24) circle (2pt) (124) circle (2pt)
        (3) circle (2pt) (13) circle (2pt)  (34)circle (2pt) (134) circle (2pt);
        \draw (2) edge (12) edge (24) edge (124);
        \draw  (3) edge (13) edge (34) edge (134);
       \draw   (134) edge (13) edge (34);\draw   (124) edge (12) edge (24);
       \draw (-2) edge (-3) edge (3) edge (134) edge (13) edge (34);
       \draw (-3) edge (2) edge (12) edge (24) edge (124);
        \fill (-2) circle (2pt) (-3) circle (2pt);
        \fill (-2) node[above]{\tiny$\bar{2}$}
        (-3) node[above]{\tiny$\bar{3}$}
         (2) node[right]{\tiny$2$}
         (3) node[left]{\tiny$3$}
        (13) node[above]{\tiny$13$}
        (34) node[below]{\tiny$34$}
        (24) node[below]{\tiny$24$}
        (12) node[above]{\tiny$12$}
        (124) node[left]{\tiny$124$}
        (134) node[right]{\tiny$134$};
    \end{tikzpicture}
    \caption*{$K_{G,S}^{\mathrm{odd}}$}
    \end{subfigure}
    \begin{subfigure}{.25\textwidth}
\centering
    \begin{tikzpicture}[scale=.6]
        \path (0,0) coordinate (2)
        (0,1) coordinate (-3)
        (2,1) coordinate (-2)
        (-2,0) coordinate (124)
          (-1,1) coordinate (12)
             (-1,-1) coordinate (24)
        (2,0) coordinate (3)    (4,0) coordinate (134)
        (3,1) coordinate (13)     (3,-1) coordinate (34);
       \draw (-2) edge (-3) edge (3);
       \draw (-3) edge (2);
        \fill (-2) node[above]{\tiny$\bar{2}$}
        (-3) node[above]{\tiny$\bar{3}$}
        (2) node[right]{\tiny$2$}
        (3) node[left]{\tiny$3$};
        \fill (2) circle (2pt) (3)circle (2pt);
        \fill (-2) circle (2pt) (-3)circle (2pt);
    \end{tikzpicture}
    \caption*{$K^{\mathrm{odd}}_{G[S]}$}
    \end{subfigure}
    \end{center}
    \caption{The simplicial complexes $K_{G,S}^{\mathrm{odd}}$ and $K^{\mathrm{odd}}_{G[S]}$ when $S = \{2,3\}$.}\label{ex:mainex2}
    \end{figure}

Note that two simplicial complexes $K_{G,S}^{\mathrm{odd}}$ and $K^{\mathrm{odd}}_{G[S]}$ in Figure~\ref{ex:mainex2} are homotopy equivalent and they are contractible.
We will show that this phenomenon holds in general. More precisely, we will show that
  $K^{\mathrm{odd}}_{G,S}$ and $K^{\mathrm{odd}}_{G[S]}$ are homotopy equivalent  for any $S\subset [n]$ (Lemma~\ref{lem:reduced:subgraph}), and then  $K^{\mathrm{odd}}_{G[S]}$ is contractible when $G[S]$ is a connected even graph (Lemma~\ref{lem:even:contractible}).

We mention one useful lemma to prove Lemmata~\ref{lem:reduced:subgraph} and \ref{lem:even:contractible}.

\begin{lemma}[Lemma~5.2 of \cite{CP}] \label{lem:st}
        Let $I$ be a vertex of a simplicial complex  {$K$} and suppose that the link $\Lk_K I$ of $I$ in $K$ is contractible.
        Then {$K$} is homotopy equivalent to the complex ${K\setminus\St_K I}$, where $\St_K I$ is the star of $I$ in $K$.
\end{lemma}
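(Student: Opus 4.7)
The plan is to decompose $K$ as a union of two subcomplexes meeting along $\Lk_K I$, and then exploit the fact that collapsing a contractible subcomplex preserves the homotopy type. Let $K' = K \setminus \St_K I$ denote the subcomplex of simplices not containing the vertex $I$, and let $\overline{\St}_K I$ denote the closed star of $I$ (the subcomplex generated by all simplices containing $I$). Standard simplicial considerations give the decomposition $K = K' \cup \overline{\St}_K I$ together with the identity $K' \cap \overline{\St}_K I = \Lk_K I$.

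Next, observe that $\overline{\St}_K I$ is the simplicial cone on $\Lk_K I$ with apex $I$, hence is contractible; moreover, by hypothesis, $\Lk_K I$ is also contractible. The inclusion of any subcomplex of a simplicial complex is a cofibration, so collapsing a contractible subcomplex yields a homotopy equivalence. Collapsing $\overline{\St}_K I$ in $K$ gives $K \simeq K / \overline{\St}_K I$. By the evident simplicial identification $K / \overline{\St}_K I \cong K'/\Lk_K I$, and then collapsing the contractible subcomplex $\Lk_K I$ inside $K'$, we obtain $K'/\Lk_K I \simeq K'$. Composing these two homotopy equivalences yields $K \simeq K' = K \setminus \St_K I$, as required.

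There is no substantive obstacle; the argument is a direct application of standard CW-complex machinery once the decomposition is set up. An equally short alternative is the gluing lemma: the inclusion $\Lk_K I \hookrightarrow \overline{\St}_K I$ is a homotopy equivalence (both spaces being contractible), so the induced map of pushouts
\[
K' \;=\; K' \cup_{\Lk_K I} \Lk_K I \;\longrightarrow\; K' \cup_{\Lk_K I} \overline{\St}_K I \;=\; K
\]
is a homotopy equivalence. The one point that warrants care, in either approach, is the identification $K' \cap \overline{\St}_K I = \Lk_K I$, which follows immediately from the definitions: a simplex lies in $\overline{\St}_K I$ if and only if it is contained in some simplex of $K$ having $I$ as a vertex, and it lies in $K'$ precisely when it does not itself contain $I$, so the intersection consists of exactly those simplices of $K \setminus \{I\}$ whose join with $\{I\}$ is a simplex of $K$, i.e., $\Lk_K I$.
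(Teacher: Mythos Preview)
Your proof is correct and entirely standard. Note, however, that the paper does not actually prove this lemma: it is quoted as Lemma~5.2 of \cite{CP} and used as a black box, so there is no ``paper's own proof'' to compare against. Your argument via the decomposition $K = (K\setminus\St_K I)\cup\overline{\St}_K I$ with intersection $\Lk_K I$, followed by collapsing contractible subcomplexes (or equivalently the gluing lemma), is exactly the kind of elementary homotopy-theoretic argument one would expect for such a statement, and it stands on its own.
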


We remark that for any two elements $I$ and $J$ in $K:=K^{\mathrm{odd}}_S$, it follows from~Proposition~\ref{prop:faces_meet} that
 $J\in \Lk_K I$ if and only if one of the following (a)$\sim$(d) is true: (a) $J\subsetneq I$, (b) $I\subseteq J$, (c) {$I\cup J\not\in\mathcal{I}_G$}, and (d) $J=\{\bar{j}\}$ for some $j\in [n]\setminus I$.

\begin{lemma}\label{lem:reduced:subgraph}
For any $S\subset [n]$, $K^{\mathrm{odd}}_{G,S}$ is homotopy equivalent to $K^{\mathrm{odd}}_{G[S]}$.
\end{lemma}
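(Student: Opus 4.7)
The plan is to exhibit $K^{\mathrm{odd}}_{G[S]}$ as a full subcomplex of $K^{\mathrm{odd}}_{G,S}$ and then strip off the ``extra'' vertices one at a time via Lemma~\ref{lem:st}. For any $I\subseteq S$, connectedness of $G[I]$ coincides with that of $G[S][I]$, so the vertex set of $K^{\mathrm{odd}}_{G[S]}$ is
\[
\mathcal{W} \;=\; \{I\in\mathcal{I}_G : I\subseteq S,\ |I|\text{ odd}\}\,\cup\,\{\{\bar{s}\} : s\in S\}.
\]
This sits inside the vertex set of $K^{\mathrm{odd}}_{G,S}$ since $|I\cap S|=|I|$ when $I\subseteq S$. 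For $I,J\subseteq S$ one has $I\cup J\subseteq S$, and then $I\cup J\in\mathcal{I}_G$ iff $I\cup J\in\mathcal{I}_{G[S]}$; combined with Proposition~\ref{prop:faces_meet} this shows $K^{\mathrm{odd}}_{G[S]}$ is the full subcomplex of $K^{\mathrm{odd}}_{G,S}$ on $\mathcal{W}$.

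Let $T=[n]\setminus S$ and $\mathcal{V}=\{I\in\mathcal{I}_G : |I\cap S|\text{ odd},\ I\cap T\neq\emptyset\}$, the vertices of $K^{\mathrm{odd}}_{G,S}$ outside $\mathcal{W}$. I would process the elements of $\mathcal{V}$ one at a time, in any order of non-decreasing $|I|$. For each $I\in\mathcal{V}$, since $|I\cap S|$ is odd, the induced subgraph $G[I\cap S]$ has at least one connected component of odd cardinality; pick such a component and call it $\pi(I)$. Then $\pi(I)\in\mathcal{W}$, so it is never deleted, and $\pi(I)\subsetneq I$ (because $I\not\subseteq S$), so $\pi(I)$ is compatible with $I$ and lies in $\Lk_{K}I$. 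The key claim is that $\pi(I)$ is a cone point of $\Lk_{K}I$ in the current complex $K$, making the link contractible; Lemma~\ref{lem:st} then lets us delete $I$ without changing the homotopy type.

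To verify the cone claim, take any $J\neq I$ in $\Lk_{K}I$ and use the case list (a)--(d) from the remark after Lemma~\ref{lem:st}. Case (b), $I\subsetneq J$, gives $\pi(I)\subseteq I\subsetneq J$ (nested). Case (c), $I\cup J\notin\mathcal{I}_G$ with $I,J\in\mathcal{I}_G$ both connected, forces $I\cap J=\emptyset$ and no $G$-edge between $I$ and $J$; this passes to $\pi(I)\subseteq I$, yielding $\pi(I)\cup J\notin\mathcal{I}_G$. Case (d), $J=\{\bar{j}\}$ with $j\in S\setminus I$, gives $j\notin\pi(I)$ since $\pi(I)\subseteq I$. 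The delicate case is (a), $J\subsetneq I$: if $J\subseteq S$ then $J\subseteq I\cap S$ is connected in $G[I\cap S]$ and lies in a single component, either $\pi(I)$ (so $J\subseteq\pi(I)$) or one disjoint from $\pi(I)$ with no joining $G$-edge inside $I\cap S$ (so $\pi(I)\cup J\notin\mathcal{I}_G$); if $J\not\subseteq S$ then $J\in\mathcal{V}$ with $|J|<|I|$, so $J$ has already been removed. Once all of $\mathcal{V}$ is processed, the residual complex is the full subcomplex of $K^{\mathrm{odd}}_{G,S}$ on $\mathcal{W}$, which is $K^{\mathrm{odd}}_{G[S]}$. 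The main obstacle is exactly case (a): one needs the ordering on $\mathcal{V}$ to be fine enough that every troublesome $J\subsetneq I$ with $J\cap T\neq\emptyset$ is removed before $I$ (non-decreasing $|I|$ suffices), and one needs $\pi(I)$ to be a single odd-cardinality component of $G[I\cap S]$ so that the $J\subseteq S$ sub-case reduces cleanly to an argument about components of $G[I\cap S]$.
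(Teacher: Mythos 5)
Your proof is correct and takes essentially the same route as the paper: you delete the vertices $I$ with $I\not\subseteq S$ by exhibiting an odd component of $G[I\cap S]$ as a cone point of the link and invoking Lemma~\ref{lem:st}, which is exactly the paper's argument. The only difference is bookkeeping: you remove the extra vertices directly in non-decreasing order of $|I|$, whereas the paper argues by contradiction with a minimal complex and a vertex minimizing $(|I\cap S|,|I|)$ lexicographically; both devices handle the delicate sub-case $J\subsetneq I$ with $J\not\subseteq S$ equally well.
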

\begin{proof}For simplicity, we write  $K:=K_{G,S}^{\mathrm{odd}}$  and  $K':=K_{G[S]}^{\mathrm{odd}}$. Let $K^\ast$ be a minimal complex obtained by eliminating the stars of some vertices  in $K\setminus K'$ without changing the homotopy type. We will show that $K^\ast=K'$.

Suppose that $K^\ast\supsetneq K'$. Let us take a vertex $I$ in $K^\ast\setminus K'$ such that $|I\cap S|$ is minimal and $|I|$ is minimal.\footnote{We first check the minimality of $|I\cap S|$ and then check the minimality of $|I|$.}
Note that $I\in \mathcal{I}_G$ and $|I\cap S|$ is odd.
Then $G[I\cap S]$ has a connected component $I_1$ with an odd number of vertices. Clearly, $I_1\subset (I\cap S)$ and so ${I_1}\subset S$.
Thus, $I_1\in K'$ and $I_1\neq I$.
Note that if $J\in\Lk_{K^*} I$, then one of the following (a)$\sim$(d) is true : (a) $J\subsetneq I$, (b) $I\subseteq J$, (c) {$I\cup J\not\in\mathcal{I}_G$}, and (d) $J=\{\bar{j}\}$ for some $j\in [n]\setminus I$.
In the following, we will show that any element $J\in\Lk_{K^\ast} I$ also belongs to $\Lk_{K^*} I_1$, that is,  one of
 (a${}^\prime$)$\sim$(d${}^\prime$) is true: (a${}^\prime$) $J\subsetneq I_1$, (b${}^\prime$) $I_1\subseteq J$, (c${}^\prime$) {$I_1\cup J\not\in\mathcal{I}_G$}, and (d${}^\prime$) {$J=\{\bar{j}\}$ for some } $j\in[n]\setminus I_1$.

Suppose (a) $J\subsetneq I$. Then  $J\cap S$ is a subset of $I\cap S$, and hence $|J\cap S|\le |I\cap S|$ and $|J|<|I|$. Thus $J\in K'$ by the minimality conditions of $I$, that is, $J\subset S$.
Then $J$ is a subset of $S\cap I$ and so $J$ is a connected graph contained in a connected component of $G[I\cap S]$, which implies that either (a${}^\prime$) $J\subset I_1$ or (c${}^\prime$) {$I\cup J\not\in\mathcal{I}_G$}.

If (b) $I\subset J$, then (b${}^\prime$) $I_1\subset J$ holds.
If (c) {$I\cup J\not\in\mathcal{I}_G$}, then (c${}^\prime$) {$I_1\cup J\not\in\mathcal{I}_G$}.
If (d) $J=\{\bar{j}\}$ for some $j\in [n]\setminus I$, then  (d${}^\prime$) $j\in[n]\setminus I_1$ holds.

Therefore, $\Lk_{K^\ast} I$ is the cone with the vertex $I_1$, and so $\Lk_{K^\ast} I$ is contractible and $K^\ast$ is homotopy equivalent to $K^\ast\setminus \St_{K^\ast} I$ by Lemma~\ref{lem:st}. Then $K^\ast\setminus \St_{K^{\ast}} I$ is smaller than $K^\ast$, which contradicts the minimality of $K^\ast$. Therefore, $K^\ast=K'$.
\end{proof}

It follows from Lemma~\ref{lem:reduced:subgraph} that \eqref{eq:common0} is equivalent to the following:
\begin{eqnarray}\label{eq:common1}
       \beta^i (X^\R(\square_G))&=& \sum_{S\subseteq [n]} \tilde{\beta}^{i-1}(K^{\mathrm{odd}}_{G[S]}).
\end{eqnarray}

\begin{lemma}\label{lem:even:contractible}
If $G$ is a connected even graph, then
 $K_G^{\mathrm{odd}}$ is contractible.
\end{lemma}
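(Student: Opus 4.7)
The plan is to use Lemma \ref{lem:st} repeatedly to collapse the non-singleton vertices of $K_G^{\mathrm{odd}}$ one at a time, eventually leaving only the singletons $\{\overline{1}\}, \ldots, \{\overline{n}\}$, which by Proposition \ref{prop:faces_meet}(3) are pairwise adjacent and hence (by flagness) span a simplex, a contractible complex. Since $G$ is even, $V(G) \notin K_G^{\mathrm{odd}}$, so the non-singleton vertices of $K_G^{\mathrm{odd}}$ are precisely the $I \in \mathcal{I}_G$ with $|I|$ odd and $1 \le |I| \le n-1$; I would process these in order of decreasing $|I|$.

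The claim to verify at each step is the following: with $K^\ast$ denoting the complex remaining after all $I'' \in \mathcal{I}_G$ of odd size strictly greater than $|I|$ have been collapsed, $\Lk_{K^\ast} I$ is a cone with apex $\{\overline{v}\}$, where $v \in V(G) \setminus I$ is chosen to be adjacent in $G$ to some vertex of $I$ (such $v$ exists because $G$ is connected and $\emptyset \neq I \subsetneq V(G)$). Since $K_G^{\mathrm{odd}}$ is flag --- being the nerve of the facets of the flag polytope $\square_G$, by Proposition \ref{prop:faces_meet} --- and flagness passes to the full subcomplex $K^\ast$ obtained by star-removal, it suffices to check that $\{\overline{v}\}$ is pairwise adjacent in $K_G^{\mathrm{odd}}$ to every other vertex of $\Lk_{K^\ast} I$. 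By Proposition \ref{prop:faces_meet} and the decreasing-size order, these other vertices are of three kinds: (a) $I' \in \mathcal{I}_G$ with $I' \subsetneq I$ of odd size; (b) $I' \in \mathcal{I}_G$ of odd size at most $|I|$ with $I \cup I' \notin \mathcal{I}_G$; and (c) $\{\overline{j}\}$ for $j \in V(G) \setminus I$.

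Adjacency of $\{\overline{v}\}$ to vertices of type (a) and (c) is immediate from $v \notin I$ together with Proposition \ref{prop:faces_meet}(1),(3). For (b), since $G[I]$ and $G[I']$ are each connected while $G[I \cup I']$ is not, one must have $I \cap I' = \emptyset$ and no edge of $G$ between $I$ and $I'$; if $v$ were in $I'$, the chosen edge from $v$ to $I$ would contradict this, so $v \notin I'$ and adjacency follows from Proposition \ref{prop:faces_meet}(1). I expect the main obstacle to be exactly case (b), together with justifying that the decreasing-size ordering rules out the fourth \emph{a priori} possibility in $\Lk I$ permitted by Proposition \ref{prop:faces_meet}(1), namely $I \subsetneq I'$ with $|I'|$ odd: any such $I'$ satisfies $|I'| \ge |I|+2$ and has already been removed. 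Granting the cone claim, Lemma \ref{lem:st} allows us to collapse $\St_{K^\ast} I$, and iterating through all such $I$ leaves the $(n-1)$-simplex on $\{\overline{1}\}, \ldots, \{\overline{n}\}$, completing the proof.
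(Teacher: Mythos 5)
Your proposal is correct and takes essentially the same route as the paper's proof: iterated removal of stars (Lemma~\ref{lem:st}) of the odd connected vertices $I$ in decreasing order of size, showing $\Lk_{K^\ast} I$ is a cone with apex $\{\bar{v}\}$ for a vertex $v\notin I$ adjacent to $I$ (using that $I\subsetneq V(G)$ since $G$ is even and $I$ odd), and ending at the simplex on $\{\bar{1}\},\ldots,\{\bar{n}\}$; the paper packages the induction as a minimal-complex contradiction and leaves the flagness step implicit, but the content is identical. One cosmetic slip: adjacency of $\{\bar{v}\}$ to a vertex $I'\in\mathcal{I}_G$ in cases (a) and (b) is item (2) of Proposition~\ref{prop:faces_meet}, not items (1) or (3), though your justification ($v\notin I'$) is exactly the right one.
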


\begin{proof}
For simplicity, let  $K:=K_G^{\mathrm{odd}}$.
Let $K'$ be the induced subcomplex of $K$ on the vertices $[\bar{n}]$, which is a simplex.
Let $K^\ast$ be a minimal complex obtained by eliminating the stars of some vertices $I\in \mathcal{I}_G$ such that $|I|$ is odd, without changing the homotopy type.

Suppose that $K^\ast\subsetneq K'$.
Take a vertex $I$  in $K^\ast\setminus K'$ such that $|I|$ is maximal.
Since $|I|$ is odd and $G$ is a connected even graph, there is a vertex $i\in [n]\setminus I$ of $G$ such that $I\cup\{i\}$ induces a connected graph.
Let $I_1=\{\bar{i}\}$.
Clearly, $I_1\in K'$.
We will show that any vertex in $\Lk_{K^*} I$ is in the link of $I_1$.
Take $J\in\Lk_{K^*} I$.
If $J=\{\bar{j}\}$ for some $j\in [n]\setminus I$, then $[n]\setminus I \subset [n]\setminus I_1$ and so $j\in [n]\setminus I_1$, which implies that $J$ is in the link of $I_1$.
Suppose that $J\in \mathcal{I}_G$ such that $|J|$ is odd. Then the maximality condition of $I$ implies that $I \not\subset J$.
If $J\subset I$, then $i\not\in J$.
If $G[I\cup J]$ is disconnected, then any neighbor of a vertex in $G[I]$ does not belong to $G[J]$ and so $i\not\in J$.
Therefore, $\Lk_{K^\ast} I$ is contractible and $K^\ast$ is homotopy equivalent to $K^\ast\setminus \St_{K^\ast} I$ by Lemma~\ref{lem:st}. Then $K^\ast\setminus \St_{K^\ast} I$ is smaller than $K^\ast$, which contradicts the minimality of $K^\ast$. Therefore, $K^\ast=K'$.
\end{proof}

\bigskip

Now we are ready to prove Theorem~\ref{thm:main:typeI}.
\begin{proof}[Proof of Theorem~\ref{thm:main:typeI}]
If $\kappa(G)={\kappa}$ and $G_1$,   $\ldots$, $G_{\kappa}$ are the connected components  of $G$, then
from definition,
 \[ K_G^{\mathrm{odd}} = K_{G_1}^{\mathrm{odd}}\ast \cdots \ast K_{G_\kappa}^{\mathrm{odd}} \simeq S^{\kappa-1}\wedge K_{G_1}^{\mathrm{odd}}\wedge\cdots\wedge K_{G_\kappa}^{\mathrm{odd}},\]
where $K_1*K_2$ denotes the simplicial join of $K_1$ and $K_2$.
Thus, \eqref{eq:common1} is equivalent to
\begin{eqnarray}\label{eq:common2}
    \beta^i (X^\R(\square_G)) &=& \sum_{S\subseteq [n]} \tilde{\beta}^{i-1}( K^{\mathrm{odd}}_{G[S]}) \quad = \quad \sum_{S\subseteq [n]} \left( \sum_{\sum k_j=i-\kappa(G[S])}   \prod_{j} \tilde{\beta}^{k_j}    (K^{\mathrm{odd}}_{G[S_j]}) \right),
\end{eqnarray}
where $G[S_j]$ means the $j$th component of $G[S]$.
If $G[S]$ is not odd, then $G[S]$ has a connected component with an even number of vertices, and then by Lemma~\ref{lem:even:contractible}, $\tilde{\beta}^{i-1}( K^{\mathrm{odd}}_{G[S]})=0$.
Thus \eqref{eq:common2}  is equivalent to
\begin{eqnarray*}\label{eq:common3}
    \beta^i (X^\R(\square_G)) &=& \sum_{S\subseteq [n]\atop G[S]\text{ is odd}} \tilde{\beta}^{i-1}( K^{\mathrm{odd}}_{G[S]}).
\end{eqnarray*}
For a graph $H$, let us denote by $(\partial \square_H)^*$ the simplicial sphere which is the dual complex of $\partial \square_H$. Then $K^{\mathrm{odd}}_{H}$ is an induced subcomplex of $(\partial \square_H)^*$. We denote by $K^{\mathrm{even}}_{H}$ the induced subcomplex of $(\partial \square_H)^*$ on the vertices not belonging to $K^{\mathrm{odd}}_{H}$. Note that the vertices of $K^{\mathrm{even}}_{H}$ bijectively correspond to the connected even induced subgraphs of $H$. {Since $(\partial \square_{G[S]})^*$ is a sphere of dimension $|S|-1$}, the Alexander duality implies
\begin{eqnarray}\label{eq:common4}
       \beta^i (X^\R(\square_G))&=& \sum_{S\subseteq [n]\atop G[S]\text{ is odd}} \tilde{\beta}_{|S|-i-1}(K^{\mathrm{even}}_{G[S]}).
\end{eqnarray}

When $|S|=1$, $K^{\mathrm{even}}_{G[S]}$ is an empty simplicial complex. In this case we regard it as a sphere of dimension $-1$ for the formula above.

For any odd graph $H$, $K^{\mathrm{even}}_{H}$ is homotopy equivalent to the order complex of the poset
$\mathcal{P}_H^{\mathrm{even}}\setminus \{\hat{0},\hat{1}\}$ (see Lemma~4.7 of \cite{CP}). The poset $\mathcal{P}_{H}^{\mathrm{even}} \setminus \{\hat{0},\hat{1}\}$ is pure and shellable by Theorem~\ref{prop:simple-shellable}, and its length is $\frac{|S|-\kappa(G[S])}{2} - 1$.
Hence,
\[ \tilde{\beta}_{|S|-i-1}(K^{\mathrm{even}}_{G[S]}) = \left|\mu(\mathcal{P}^{\mathrm{even}}_{G[S]})\right| = \begin{cases}
  \left|\sum_{H\sqsubseteq G[S]} a(H)\right| &\text{if }|S|-i-1=\frac{|S|-\kappa(G[S])}{2}-1;\\
  0 &\text{otherwise.}
\end{cases} \]
Thus, together with~\eqref{eqn:bwrittenbya}, \eqref{eq:common4} is equivalent to the following
\begin{eqnarray}\label{eq:common5}
       \beta^i (X^\R(\square_G))&=& \sum_{S\subseteq [n]\atop G[S]\text{ is odd}} \left|b(G[S])\right| ,
\end{eqnarray}
where $|S|-i=\frac{|S|-\kappa(G[S])}{2}$, that is, $|S|+\kappa(G[S])=2i$.
Note that \eqref{eq:common5} is equivalent to
    $$
    \beta^i(X^\R(\square_G))= \sum_{H \sqsubseteq G\atop |V(H)|+\kappa(H)=2i}\left|b(H)\right|,
    $$
which completes the proof of Theorem~\ref{thm:main:typeI}.

\end{proof}

\begin{remark}
It follows from Theorem~\ref{prop:simple-shellable} and Lemmas~\ref{lem:reduced:subgraph} and~\ref{lem:even:contractible} that $(\square_G)_{\omega_S}$ (respectively, $(\triangle_G)_{\omega_S}$) is torsion-free for every $S\subseteq [n]$, and hence $H^\ast(X^\R(\square_G))$ (respectively, $H^\ast(X^\R(\triangle_G))$)  is completely determined by the $b$-numbers (respectively, $a$-numbers) of all $H\sqsubset G$ and the $h$-vector of $\square_G$ (respectively, $\triangle_G$). Furthermore, for an octopus $G$, the real toric manifolds $X^\R(\triangle_G)$ and $X^\R(\square_{L(G)})$ have the same cohomology groups.
\end{remark}

\section{Examples}\label{sec:example}
In this section, we provide {some interesting integer sequences arising from the $b$-number of a graph $G$ and the Betti numbers of $X^\R(\square_G)$ for some graph families} such as paths, cycles, complete graphs, and stars.

\begin{corollary}[\cite{CP}]\label{cor:b_numbers}
For an odd integer $n$, we have
$$
b(G)=\begin{cases}
              (-1)^{\frac{n+1}{2}}A_{n} & \text{if }G=K_{n},\\
               (-1)^{\frac{n+1}{2}}\mathrm{Cat(\frac{n-1}{2})} &  \text{if }G=P_{n},\\
                (-1)^{\frac{n-1}{2}}\binom{n-1}{(n-1)/2}&\text{if }G=C_{n},\\
                 (-1)^{\frac{n+1}{2}}A_{n-1}&\text{if }G=K_{1,n-1},\\
            \end{cases}
        $$
 where $A_{k}$ is the $k$th Euler Zigzag number and
$\mathrm{Cat}(n)$ is the $n$th Catalan number.
\end{corollary}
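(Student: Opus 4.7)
The plan is to derive each of the four formulas by combining the main theorem (Theorem~\ref{thm:main:typeI}), the duality \eqref{eqn:bwrittenbya} of Theorem~\ref{thm:abdual}, and the explicit $a$-number computations of Corollary~\ref{cor:number}. In every case Corollary~\ref{cor:absign} fixes the sign, so the task reduces to identifying the positive integer $|b(G)|$. For the complete graph $K_n$, I would use the normal-fan isomorphism of Proposition~\ref{prop:spider:octopus} arising from $L(K_{1,n})=K_n$: this gives $X^{\R}(\square_{K_n})\cong X^{\R}(\triangle_{K_{1,n}})$, and Corollary~\ref{cor:number} yields $\beta^{(n+1)/2}(X^{\R}(\square_{K_n}))=A_n$. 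Theorem~\ref{thm:main:typeI} identifies this top Betti number with $|b(K_n)|$, since $H=K_n$ is the only induced subgraph of $K_n$ with $|V(H)|+\kappa(H)=n+1$.

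For the star $K_{1,n-1}$ the line-graph trick fails, because $L(K_{1,n-1})=K_{n-1}$ is even when $n$ is odd. Instead I would apply \eqref{eqn:bwrittenbya} directly: the induced subgraphs of $K_{1,n-1}$ are either sets of isolated vertices (whose $a$-values all vanish apart from the empty graph) or substars $K_{1,k-1}$, for which Corollary~\ref{cor:number} applied to $\triangle_{K_{1,n-1}}$ together with Corollary~\ref{cor:absign} yields $a(K_{1,k-1})=(-1)^{k/2}A_{k-1}$ for even $k$. The resulting sum reduces to the identity $1+\sum_{j=1}^{(n-1)/2}\binom{n-1}{2j-1}(-1)^{j}A_{2j-1}=(-1)^{(n-1)/2}A_{n-1}$, which I would prove by extracting the even-indexed Taylor coefficients from the generating-function identity $\tanh(t)\,e^{t}=e^{t}-\mathrm{sech}(t)$ (equivalent to $1-\tanh(t)=e^{-t}\mathrm{sech}(t)$).

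For the path $P_n$, Theorem~\ref{prop:line:graph} applied to the tree $P_{n+1}$ with $L(P_{n+1})=P_n$ and Corollary~\ref{cor:number} give $\beta^{i}(X^{\R}(\square_{P_n}))=\binom{n+1}{i}-\binom{n+1}{i-1}$, which at $i=(n+1)/2$ is the Catalan number $\Cat((n+1)/2)$. Expanding the same Betti number through Theorem~\ref{thm:main:typeI} gives a sum indexed by disjoint unions of odd paths in $P_n$ filling exactly $n+1-\kappa$ vertices with single-vertex internal gaps. Letting $f(y)=\sum_{\ell\ge 1}|b(P_{2\ell-1})|y^{\ell}$, the bookkeeping reads $[y^{(n+1)/2}]f(y)/(1-f(y))=\Cat((n+1)/2)$, and the classical Catalan identity $yC(y)/(1-yC(y))=C(y)-1$ (immediate from $C=1+yC^{2}$) forces $f(y)=yC(y)$, whence $|b(P_n)|=\Cat((n-1)/2)$.

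The cycle $C_n$ admits no line-graph shortcut, since $L(C_n)=C_n$ is not a tree. I would again invoke the duality \eqref{eqn:bwrittenbya}: proper induced subgraphs of $C_n$ are disjoint unions of paths $P_{2\ell_1}\sqcup\cdots\sqcup P_{2\ell_\kappa}$ placed cyclically, each contributing $\prod(-1)^{\ell_j}\Cat(\ell_j)$. A cyclic-composition (``necklace'') argument converts the sum into $-n[x^n]\log\bigl(1-B(x)\bigr)$, where $B(x)$ is the weighted generating function of a single (even-path, gap) block; substituting $R_{\mathrm{even}}(x)=C(-x^{2})-1=\bigl(\sqrt{1+4x^{2}}-1\bigr)/(2x^{2})-1$ and the gap factor $x/(1-x)$, the logarithm can in principle be evaluated in closed form. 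The main obstacle will be this final algebraic simplification of a logarithm of a surd; the identity $\binom{n-1}{(n-1)/2}=\tfrac{n+1}{2}\Cat((n-1)/2)$ strongly suggests a cleaner route, namely to prove $|b(C_n)|=\tfrac{n+1}{2}|b(P_n)|$ directly by a cyclic-averaging bijection between weighted configurations on $C_n$ and pairs consisting of a configuration on $P_n$ together with a marked rotation of $C_n$.
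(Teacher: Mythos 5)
The paper never proves this corollary itself --- it is imported verbatim from \cite{CP} --- so your derivation has to stand on its own, and three of your four cases do. For $K_n$: since every nonempty induced subgraph of $K_n$ is connected, $H=K_n$ is indeed the only subgraph with $|V(H)|+\kappa(H)=n+1$, so Proposition~\ref{prop:spider:octopus} plus Corollary~\ref{cor:number} and Theorem~\ref{thm:main:typeI} give $|b(K_n)|=A_n$, and Corollary~\ref{cor:absign} gives the sign $(-1)^{(n+1)/2}$. For the star, your reduction of \eqref{eqn:bwrittenbya} to $1+\sum_{j}\binom{n-1}{2j-1}(-1)^{j}A_{2j-1}=(-1)^{(n-1)/2}A_{n-1}$ is correct, and the identity does follow from $(1-\tanh t)e^{t}=\mathrm{sech}\,t$ (checks: $n=3,5$ give $1,-5$). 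For the path, the decomposition of the subgraphs with $|V(H)|+\kappa(H)=n+1$ into odd paths separated by single-vertex gaps is right, the equation $f/(1-f)=C-1$ with $f(y)=\sum_{\ell\ge1}|b(P_{2\ell-1})|y^{\ell}$ has the unique solution $f=yC$, and there is no circularity since Theorems~\ref{thm:main:typeI} and~\ref{prop:line:graph} are proved independently of this corollary.

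The genuine gap is the cycle, which is exactly the case with no line-graph or fan shortcut. You never establish $|b(C_n)|=\binom{n-1}{(n-1)/2}$: route (i) stops at an unevaluated logarithm, and even its setup is unverified (for cyclic arrangements of signed blocks you must justify the claimed $-n[x^{n}]\log(1-B(x))$ bookkeeping --- marking a vertex versus a block changes the factor); route (ii) is only named, and as described it cannot work, because a ``marked rotation of $C_n$'' supplies a factor $n$, not the required $\tfrac{n+1}{2}$ (e.g.\ $n=5$: $5\Cat(2)=10\neq 6=\binom{4}{2}$), and the terms $\prod_j(-1)^{\ell_j}\Cat(\ell_j)$ carry alternating signs, so a bijection of configurations alone cannot control the cancellation in the signed sum $b(C_n)=-\bigl[1+\sum_H a(H)\bigr]$. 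Until one of these is actually carried out, the cycle case is unproven. Finally, note a sign issue you glossed over: your blanket appeal to Corollary~\ref{cor:absign} yields $b(C_n)=(-1)^{(n+1)/2}\binom{n-1}{(n-1)/2}$, whereas the statement prints $(-1)^{(n-1)/2}$; a direct check ($b(C_3)=2$, $b(C_5)=-6$) shows the printed cycle sign is a typo and yours is the correct one, but then your argument proves a corrected statement rather than the one as printed, and you should say so explicitly.
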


For paths and complete graphs, their formulae for the Betti numbers are directly obtained by Corollary~\ref{cor:number} and Proposition~\ref{prop:spider:octopus}.
From the equivalence of the normal fans of $\square_{P_n}$ and $\triangle_{P_{n+1}}$ in Proposition~\ref{prop:spider:octopus}, a corollary follows from Corollary~1.4 of \cite{CP}. See the table on the left side of Table~\ref{tabl:cycletable}, and it
makes up the Catalan's triangle.
\begin{corollary}\label{cor:bofpath}
For any integer $i\ge 0$, we have \[
        \beta^i(X^\R(\square_{P_n})) = \begin{cases}
          \binom{n+1}{i} - \binom{n+1}{i-1}&\text{ if }1 \le i \le \lfloor\frac {n+1}2\rfloor,\\
          0 & \text{ otherwise}.\\
        \end{cases}
    \]
\end{corollary}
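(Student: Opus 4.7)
The plan is to reduce the computation to the associahedron case via Theorem~\ref{prop:line:graph}. The key observation is that $P_{n+1}$ is a tree (hence a forest) whose line graph is precisely $P_n$: the $n$ edges of $P_{n+1}$, ordered along the path, are joined by successive incidence and therefore form a path with $n$ vertices. Consequently, applying Theorem~\ref{prop:line:graph} to the forest $G = P_{n+1}$ immediately gives
\[
  \beta^i(X^\R(\square_{P_n})) \;=\; \beta^i(X^\R(\triangle_{P_{n+1}}))
\]
for every $i \ge 0$, so it suffices to evaluate the right-hand side.

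Next I would invoke Corollary~\ref{cor:number} applied to the path $P_{n+1}$, which has $n+1$ vertices. This yields the explicit value $\beta^i(X^\R(\triangle_{P_{n+1}})) = \binom{n+1}{i} - \binom{n+1}{i-1}$ in the range $1 \le i \le \lfloor(n+1)/2\rfloor$, and substituting back gives the claimed formula. For $i$ outside that range, the vanishing can be read off directly from Theorem~\ref{thm:main:typeI}: any induced subgraph $H$ of $P_n$ is a disjoint union of subpaths, and for $b(H) \ne 0$ (by multiplicativity and the definition of the $b$-number) each component must have odd order. A quick count then shows that no such $H\sqsubseteq P_n$ satisfies $|V(H)| + \kappa(H) = 2i$ once $i > \lfloor(n+1)/2\rfloor$, so the index set in the main formula is empty.

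Since both ingredients are already proved earlier in the paper, this corollary is essentially immediate; there is no substantive obstacle beyond the identification $L(P_{n+1}) = P_n$ and the brief range check.
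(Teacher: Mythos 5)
Your proposal is correct, and it reaches the result by a slightly different bridge than the paper does. The paper deduces Corollary~\ref{cor:bofpath} from Proposition~\ref{prop:spider:octopus} together with Corollary~\ref{cor:number} (Corollary~1.4 of \cite{CP}), i.e.\ it passes through the octopus/spider equivalence between the polytopes $\triangle_{P_{n+1}}$ and $\square_{P_n}$, whereas you pass through Theorem~\ref{prop:line:graph} applied to the forest $P_{n+1}$ with $L(P_{n+1})=P_n$, and then quote Corollary~\ref{cor:number}; the vanishing for $i>\lfloor (n+1)/2\rfloor$ you check directly from Theorem~\ref{thm:main:typeI} via the bound $|V(H)|+\kappa(H)\le n+1$ for induced subgraphs of $P_n$, which is a legitimate quick count. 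Your bridge is arguably the more watertight one: combinatorial equivalence of Delzant polytopes does not by itself control the integral Betti numbers of the real loci (the paper's own torus/Klein bottle example), and Section~\ref{sec:polytope} points out that the normal fans of $\triangle_{P_4}$ and $\square_{P_3}$ are \emph{not} isomorphic, so the paper's one-line appeal to ``equivalence of the normal fans'' really needs either a mod~$2$ comparison of characteristic functions or precisely the Betti-number equality of Theorem~\ref{prop:line:graph} that you invoke. What the paper's route buys is brevity and independence from Theorem~\ref{prop:line:graph}; what yours buys is that every ingredient is a statement already proved in the paper, with no circularity, since Theorem~\ref{prop:line:graph} is established before Section~\ref{sec:example} and does not use this corollary. (One shared caveat: as stated, the corollary returns $0$ at $i=0$ while $\beta^0=1$; this is an artifact of the statement's range, present in the paper's treatment as well, and not a gap in your argument.)
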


For a complete graph $K_{n}$ with $n$ vertices and a star $K_{1,n}$ with $n$ leaves, recall that the normal fan of $\square_{K_n}$ is equivalent to that of $\triangle_{K_{1,n}}$ in Proposition~\ref{prop:spider:octopus}. Hence from Corollary~\ref{cor:number}, we have the following.
\begin{corollary}[Corollary~1.6 of \cite{CP}]
  For any integer $i\ge0$, we have
    \[
        \beta^i(X^\R(\square_{K_{n}})) = \binom{n}{2i-1}A_{2i-1}.
    \]
\end{corollary}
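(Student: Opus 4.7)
The statement follows very quickly once one recognises which pieces already assembled in the paper need to be combined. I would present the plan in two routes, and either suffices.

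The first route is essentially a one-line appeal to the structural results. By Proposition~\ref{prop:spider:octopus}, since the star $K_{1,n}$ is an octopus and its line graph $L(K_{1,n})$ is the complete graph $K_n$, the normal fans of $\triangle_{K_{1,n}}$ and $\square_{K_n}$ are isomorphic. Consequently the two toric manifolds $X(\triangle_{K_{1,n}})$ and $X(\square_{K_n})$ are isomorphic, and so are their real loci $X^\R(\triangle_{K_{1,n}})$ and $X^\R(\square_{K_n})$. Then Corollary~\ref{cor:number} applied to the star $K_{1,n}$ (which has $n+1$ vertices) yields $\beta^i(X^\R(\triangle_{K_{1,n}})) = \binom{n}{2i-1}A_{2i-1}$, and this transports through the isomorphism to give the claim.

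The second route, which is the proof that fits better with the spirit of this section (showing the $b$-number machinery at work), applies Theorem~\ref{thm:main:typeI} directly. First I would note that every induced subgraph of $K_n$ is itself a complete graph $K_k$ for some $0\le k\le n$, and there are exactly $\binom{n}{k}$ induced copies of $K_k$ in $K_n$. Since each $K_k$ is connected, $|V(K_k)|+\kappa(K_k)=k+1$, so the summation index condition $|V(H)|+\kappa(H)=2i$ in Theorem~\ref{thm:main:typeI} forces $k=2i-1$. Hence
\[
\beta^i(X^\R(\square_{K_n})) \;=\; \binom{n}{2i-1}\,|b(K_{2i-1})|.
\]
Finally, Corollary~\ref{cor:b_numbers} supplies $|b(K_{2i-1})|=A_{2i-1}$, completing the identity.

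There is essentially no obstacle: the work has all been done in the earlier sections, and the calculation reduces to observing that the induced subgraph poset of $K_n$ is trivial (only complete subgraphs occur) so that the sum in Theorem~\ref{thm:main:typeI} collapses to a single term. I would present Route~2 as the main proof (since it illustrates the use of the paper's main theorem) and mention Route~1 as a consistency check confirming the known case via Corollary~\ref{cor:number}.
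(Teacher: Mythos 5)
Your proposal is correct, and your Route~1 is in fact exactly the paper's own argument: the paper derives this corollary by noting (via Proposition~\ref{prop:spider:octopus}) that the normal fan of $\square_{K_n}$ is equivalent to that of $\triangle_{K_{1,n}}$ and then quoting the stellohedron case of Corollary~\ref{cor:number}, with the star $K_{1,n}$ having $n+1$ vertices. Your Route~2 is a genuinely different and equally valid derivation: since every nonempty induced subgraph of $K_n$ is a connected complete graph $K_k$ (appearing $\binom{n}{k}$ times), the index condition $|V(H)|+\kappa(H)=2i$ in Theorem~\ref{thm:main:typeI} forces $k=2i-1$, so the sum collapses to $\binom{n}{2i-1}|b(K_{2i-1})|$, and Corollary~\ref{cor:b_numbers} gives $|b(K_{2i-1})|=A_{2i-1}$. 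What the paper's route buys is brevity and a structural explanation (the fan isomorphism means the two real toric manifolds are literally the same, so nothing new needs computing); what your route buys is independence from Proposition~\ref{prop:spider:octopus} and Corollary~\ref{cor:number}, serving instead as a direct illustration of the $b$-number formula, which fits the expository aim of Section~6. The only caveat, inherited from the statement itself rather than introduced by you, is the degenerate case $i=0$, where the sum in Theorem~\ref{thm:main:typeI} gives $\beta^0=1$ while the displayed formula reads $\binom{n}{-1}A_{-1}$; your argument, like the paper's, really establishes the identity for $i\ge 1$.
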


\begin{table}[b!]
\begin{minipage}{0.45\textwidth}
\centering\begin{tabular}{|c|cccccc|}
    \hline
        \backslashbox{$n$}{$i$}& $0$ & 1 & 2 & 3 & 4 & 5  \\ \hline
    $1$ & 1 & 1 &  &  &  &    \\
    2 & 1 & 2 &  &  &  &    \\
    3 & 1 & 3 & 2 &  &  &    \\
    4 & 1 & 4 & 5 &  &  &    \\
    5 & 1 & 5 & 9 & 5 &  &    \\
    6 & 1 & 6 & 14 & 14 &  &     \\
    7 & 1 & 7 & 20 & 28 & 14 &    \\
    8 & 1 & 8 & 27 & 48 & 42 &     \\
    9 & 1 & 9 & 35 & 75 & 90 & 42    \\
    \hline
\end{tabular}\caption*{$\beta^i(X^\R(\square_{P_n}))$}
\medskip
\end{minipage}\hfill
\begin{minipage}{0.45\textwidth}
\centering
\begin{tabular}{|c|cccccc|}
    \hline
    \backslashbox{$n$}{$i$}& $0$ & 1 & 2 & 3 & 4 & 5  \\ \hline
    $1$ & 1 & 1& & & &   \\
    $2$ & 1 & 2 &  &  &  &    \\
    3 & 1 & 3 & 2 &  &  &    \\
    4 & 1 & 4 & 6 &  &  &    \\
    5 & 1 & 5 & 10 & 6 &  &    \\
    6 & 1 & 6 & 15 & 20 &  &    \\
    7 & 1 & 7 & 21 & 35 & 20 &     \\
    8 & 1 & 8 & 28 & 56 & 70 &    \\
    9 & 1 & 9 & 36 & 84 & 126 &  70   \\
    \hline
\end{tabular}\caption*{$\beta^i(X^\R(\square_{C_n}))$}
\medskip
\end{minipage}\caption{The Betti numbers of $X^\R(\square_{P_n})$ and $X^\R(\square_{C_n})$.}\label{tabl:cycletable}
\end{table}

The graph cubeahedron corresponding to a cycle is called a halohedron in \cite{DHV}.
We are going to compute the Betti numbers of $X^\R(\square_{C_n})$. See the table on the right side of Table~\ref{tabl:cycletable}.

By a \emph{word} we mean a finite sequence consisting of given alphabets. Recall that a \emph{Dyck word} of length $2k$ is a grammatically correct expression consisting of $k$ left parentheses `(' and $k$ right parentheses `)'. It is well-known that the number of Dyck words of length $2k$ is the $k$th Catalan number $\Cat(k)$.

Let $\pi\colon \Z \to \Z_n$ be the canonical quotient map. We say that a  map  $f\colon \Z_n \to \{(,),*\}$ is a \emph{partial Dyck word} on $\Z_n$ if there are finitely many intervals of integers $N_1,\dotsc,N_k \subset \Z$ such that
\begin{enumerate}
  \item $\{ \pi(N_1),\dotsc,\pi(N_k)\} $ is a partition of $\Z_n$ and
  \item $f \circ \pi$ restricted to each of $N_1,\dotsc,N_k$ induces either a Dyck word or a word $*$.
\end{enumerate}
For example,
\[()()***(())*, \qquad )**((())()(), \qquad **((())()())\]
 are partial Dyck words and $(()*())*((*))*$ is not. We also note that the second one and third one are distinguished  as partial Dyck words, even though they are the same up to rotation.
We say a parenthesis is in inside of the other parenthesis if they are in a  same interval in the restriction of $f$ and one is inside of the others in the interval.
 Some of the parentheses are \emph{outermost}, that is, they are not inside of other parentheses. For example, the shaded ones in the following are outermost parentheses.
\[\colorbox{gray!10}{\!\textbf{()()}\!\!} *** \colorbox{gray!10}{\!\textbf{(}\!\!}()\colorbox{gray!10}{\!\!\textbf{)}\!}*, \qquad
\colorbox{gray!10}{\!\textbf{)}\!\!}**\colorbox{gray!10}{\!\textbf{(}\!\!}(())()(), \qquad
**\colorbox{gray!10}{\!\textbf{(}\!\!}(())()()\colorbox{gray!10}{\!\!\textbf{)}\!\!}\]
Parentheses which are not outermost are called \emph{inner}.

\begin{theorem}\label{cor:bofcycle}
For any integer $i\ge0$, we have
 \[
        \beta^i(X^\R(\square_{C_n})) =\begin{cases}
               \binom{n}{i}     & \hbox{if $1 \le i \le \lfloor \frac{n}{2} \rfloor$,}\\
                        \binom{n-1}{i-1}   & \hbox{if $n$ is odd and $i = \frac{n+1}2$,} \\
                       0  & \hbox{otherwise.}
        \end{cases}
    \]
\end{theorem}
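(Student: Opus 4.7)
The plan is to apply Theorem~\ref{thm:main:typeI} and reduce to a combinatorial identity, which I prove by an explicit bijection built from partial Dyck words on $\Z_n$.

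Every induced subgraph $H\sqsubseteq C_n$ is either $C_n$ itself or (after deleting at least one vertex) a disjoint union of paths $P_{n_1}\sqcup\cdots\sqcup P_{n_k}$. By Lemma~\ref{lem:multiplicative} and Corollary~\ref{cor:b_numbers}, $|b(H)|$ vanishes unless $H$ is odd, in which case $|b(C_n)|=\binom{n-1}{(n-1)/2}$ (odd $n$) and $|b(P_{n_1}\sqcup\cdots\sqcup P_{n_k})|=\prod_j\Cat((n_j-1)/2)$. Because consecutive path components must be separated by at least one deleted vertex, any disjoint-union-of-paths $H$ satisfies $|V(H)|+\kappa(H)=\sum_j(n_j+1)\le n$, while $|V(C_n)|+\kappa(C_n)=n+1$. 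Hence $C_n$ contributes only at $i=(n+1)/2$ (odd $n$), giving the $\binom{n-1}{i-1}$ case of the theorem; disjoint unions of paths contribute only when $0\le i\le\lfloor n/2\rfloor$; and $\beta^i=0$ for $i>\lceil n/2\rceil$. The case $i=0$ is covered by $H=\emptyset$ alone, which contributes $|b(\emptyset)|=1=\binom{n}{0}$.

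It remains to prove, for $1\le i\le\lfloor n/2\rfloor$, the identity
\begin{equation}\label{eq:ckey}
\sum_{\substack{S\subsetneq\Z_n\\C_n[S]\text{ odd}\\|V(C_n[S])|+\kappa(C_n[S])=2i}}\;\prod_j\Cat\!\bigl(\tfrac{n_j-1}{2}\bigr)=\binom{n}{i}.
\end{equation}
My plan is to establish \eqref{eq:ckey} via an explicit bijection $\phi$ between decorated subsets---a pair $(S,\mathbf{D})$ in which $\mathbf{D}$ assigns a Dyck word $D_j$ of length $n_j-1$ to each component $P_{n_j}$ of $C_n[S]$---and $i$-subsets of $\Z_n$. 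Writing $v_0^{(j)},\ldots,v_{n_j-1}^{(j)}$ for the vertices of the $j$th component in cyclic order, I place the letters of $D_j$ on $v_0^{(j)},\ldots,v_{n_j-2}^{(j)}$ and call $v_{n_j-1}^{(j)}$ the component's end-marker. Define
\[
\phi(S,\mathbf{D})\;=\;\bigcup_{j}\Bigl(\{\,v_\ell^{(j)}:\text{the $\ell$-th letter of $D_j$ is a right parenthesis}\,\}\cup\{v_{n_j-1}^{(j)}\}\Bigr).
\]
Each component $P_{n_j}=P_{2m_j+1}$ contributes exactly $m_j+1$ positions, so $|\phi(S,\mathbf{D})|=\sum_j(m_j+1)=M+k=i$, where $M=\sum_jm_j$ and $k$ is the number of components.

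To show $\phi$ is a bijection, I would construct its inverse using the cycle lemma. Given an $i$-subset $T\subset\Z_n$ with $i\le\lfloor n/2\rfloor$, consider the cyclic $\pm1$ sequence that is $+1$ on $\Z_n\setminus T$ and $-1$ on $T$; its total sum $n-2i$ is nonnegative, so the cycle lemma provides a canonical starting position (the one immediately after the first minimum of the partial sums) at which all subsequent partial sums are nonnegative. Running the standard greedy stack-matching from there---push on each $+1$, pop and match on each $-1$---yields $M$ matched pairs, to be interpreted as matched parentheses, together with $k=i-M$ unmatched $-1$'s (the end-markers) and $n-2i+k$ unmatched $+1$'s (the gap vertices). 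The nonnegativity of partial sums forces at least one unmatched $+1$ between any two end-markers, so components are automatically separated by gaps, and within each component arc the matching reads off a Dyck word decoration. The main obstacle is verifying that $\phi$ applied to this decoded pair recovers the original $T$, which reduces to a careful bookkeeping of the cycle-lemma starting position and the greedy matching; once this is settled, \eqref{eq:ckey} and hence the theorem follow.
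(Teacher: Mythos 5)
Your reduction is sound and is essentially the paper's route: the paper also applies Theorem~\ref{thm:main:typeI}, isolates the contribution of $H=C_n$ at $i=\frac{n+1}{2}$ via Corollary~\ref{cor:b_numbers}, and counts the remaining contributions by attaching Dyck-word decorations to the odd unions of paths (its ``partial Dyck words'', whose equivalence classes have size $\prod_j\Cat(k_j)=|b(H)|$) and then bijecting with $i$-subsets of $\Z_n$ by recording the left-parenthesis positions; your map $\phi$ is the mirror image of that map. The gap is in your proposed inverse. If the starting position is chosen so that all partial sums of the $\pm1$ sequence are nonnegative, then at every $-1$ the stack is nonempty (the partial sum just before a $-1$ is at least $1$, and the stack size is at least that partial sum), so the greedy matching matches \emph{every} $-1$. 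Hence your decoding always returns $M=i$ matched pairs and $k=i-M=0$ unmatched $-1$'s: it never produces end-markers, and the sentence ``the nonnegativity of partial sums forces at least one unmatched $+1$ between any two end-markers'' has no content. Concretely, take $n=4$, $T=\{1,2\}$: the unique good start is at $3$, the greedy matching gives the pairs $(0,1)$ and $(3,2)$ and no unmatched $-1$, whereas the true preimage under $\phi$ is $S=\{0,1,2\}$ with Dyck word ``()'' on $0,1$ and end-marker $2$. So the decoding cannot recover $(S,\mathbf{D})$, and the verification you flag as ``remaining bookkeeping'' cannot be carried out for this procedure. (A further, smaller issue: when $n=2i$ the total sum is $0$ and ``the first minimum of the partial sums'' is not well defined on a cycle.)

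The difficulty is that, seen only from $T$, an end-marker and an ordinary right parenthesis are both $-1$'s that the stack algorithm happily matches -- the end-marker simply gets matched to the gap vertex just before its component -- so some other rule must separate the two roles. The paper avoids this by proving instead that $f\mapsto\{\text{left-parenthesis positions}\}$ is injective on partial Dyck words and then establishing surjectivity by a greedy peeling: given an $i$-subset $I$ with $i\le n/2$, pick $j\in I$ with $j+1\notin I$, declare this a matched pair, delete both positions from the cycle, and recurse; the leftovers become $*$'s. Your argument would be repaired by replacing the cycle-lemma decoding with the analogous peeling for your mirrored encoding (repeatedly pair $q\in T$ whose cyclic predecessor is not in $T$, delete, recurse) together with a check that the resulting assignment of ``right parenthesis'' versus ``end-marker'' is forced, i.e.\ an injectivity argument for $\phi$. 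As written, the key bijection -- and hence the identity $\sum|b(H)|=\binom{n}{i}$ -- is not established.
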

\begin{proof}
We apply Theorem~\ref{thm:main:typeI} to a cycle  $C_n$.
If $i > n/2$, then the only possible nontrivial case is when $H = C_n$ is the whole graph and $i=(n+1)/2$, and  $\beta^{i}(X^\R(\square_{C_n})) = |b(H)| = \binom{n-1}{(n-1)/2}$ by Corollary~\ref{cor:b_numbers}.

Now we suppose that $0 \le i \le n/2$.
We identify $V(C_n)$ with $\Z_n$ so that $\{j,j+1\}$ is an edge of $C_n$ for $j\in \Z_n$.
For a partial Dyck word $f:\Z_n \rightarrow \{(,),*\}$, consider the set
\[I_f=\{ j\in \Z_n \mid f(j)\text{ is a parenthesis which is either inner or left outermost}\}.\]
In other words, $I_f$ excludes $*$ and right outermost parentheses.
For simplicity, for each partial Dyck word $f$, let $H_f=C_n[I_f]$. Then $H_f$ is odd and $|V(H_f)| + \kappa(H_f)$ is equal to the number of parentheses in $f$, that is, $2i$.

Let $\mathcal{D}_i$ be the set of all partial Dyck words having exactly $i$ left parentheses.
We define an equivalence relation $\sim$ on $\mathcal{D}_i$ by $f\sim g$ if and only if the inverse image of the outermost parentheses in $f$ is equal to that in $g$.
If a partial Dyck word $f$ has $q$ pairs of outermost parentheses and the $j$th pair has $2k_j$ inner parentheses, then
the size of the equivalence class $[f]$ is
\[  |[f]|=\Cat(k_1)\times \cdots \times \Cat(k_q)= \prod_{j}|b(P_{2k_j+1})|=|b(H_f)|,\]
where the second equality is from $\Cat(k_j)=|b(P_{2k_j+1})|$ by Corollary~\ref{cor:b_numbers}, and the last equality is from the fact that the $j$th component of $H_f$ is a path with $2k_j+1$ vertices  and from the definition of the $b$-number.

 On the other hand,  for any odd induced subgraph $H$ of $C_n$ such that $|V(H)| + \kappa(H)=2i$, there is a partial Dyck word $f\in \mathcal{D}_i$ such that $H_f=H$.
Thus, together with Theorem~\ref{thm:main:typeI},
\[\beta^i(X^\R(\square_{C_n})) = \sum_{H \sqsubseteq G\atop |V(H)|+\kappa(H)=2i} |b(H)| \quad =  \quad \sum_{[f]\in \mathcal{D}_i/\sim} |b(H_f)|\quad = \quad \sum_{[f]\in \mathcal{D}_i/\sim} |[f]|\quad = \quad |\mathcal{D}_i|, \]
where $\mathcal{D}_i/\sim$ is the set of all equivalence classes.

It remains to show that $|\mathcal{D}_i|= \binom{n}{i}$ for any integers $i$ and $n$ with $2i\le n$.
For given $f\in\mathcal{D}_i$, one takes the set $\{i \in \Z_n \mid f(i) = (\}$, and this set is distinguishable by $f$, and thus this gives an injective function from $\mathcal{D}_i$ to the set of all $i$-subsets of $\Z_n$.
To show that this is surjective, take a subset $I \subseteq \Z_n$ such that $|I|=i$.
Since $i\le \frac{n}{2}$, there exists $j_1 \in I$ such that $j_1+1 \notin I$. Then one assigns $f(j_1) = ($ and $f(j_1+1) =~)$, and then removes both ones to get a subset $I'=I\setminus\{j_1\}$ of $\Z_{n-2}$. Again, find $j_2 \in I'$ such that $j_2+1 \notin I'$ and then assign $f(j_2) = ($ and $f(j_2+1) =~)$. In this way, we can assign $i$ )'s inductively, and then we assign $*$ for remaining $n-2i$ elements.
\end{proof}

For a star $K_{1,n}$, we have the following result.
\begin{proposition}\label{prop:bofstar} For any integer $i\ge 0$, we have
    \[
        \beta^i(X^\R(\square_{K_{1,n}})) = \binom{{n}}{i} + \binom{{n}}{2i-2}A_{2i-2},
    \] where $A_{-2}=0$.
\end{proposition}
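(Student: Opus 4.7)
The plan is a direct application of Theorem~\ref{thm:main:typeI}. Label the vertices of $K_{1,n}$ so that $0$ is the center and $1,\ldots,n$ are the leaves. Every induced subgraph $H$ of $K_{1,n}$ is determined by its vertex set $V(H) \subseteq \{0,1,\ldots,n\}$, and falls into exactly one of two families according to whether $0 \in V(H)$ or not. I would split the sum
$$\beta^i(X^\R(\square_{K_{1,n}}))= \sum_{\substack{H \sqsubseteq K_{1,n} \\ |V(H)|+\kappa(H)=2i}}|b(H)|$$
according to this dichotomy and evaluate each piece separately.

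First consider subgraphs $H$ with $0 \notin V(H)$. These are edgeless graphs, i.e., $H$ is a disjoint union of $k$ isolated vertices chosen from $\{1,\ldots,n\}$. Such an $H$ is automatically odd, and $|V(H)|+\kappa(H) = 2k$, so the condition $|V(H)|+\kappa(H) = 2i$ forces $k = i$. By Lemma~\ref{lem:multiplicative} and the base case $b(P_1) = -1$, we have $|b(H)| = 1$ for each such $H$. There are $\binom{n}{i}$ choices of the vertex set, which produces the term $\binom{n}{i}$.

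Next consider subgraphs $H$ with $0 \in V(H)$. If $H$ has $k$ leaves, then $H$ is isomorphic to the star $K_{1,k}$ (where $K_{1,0}$ denotes a single vertex). Then $H$ is connected with $k+1$ vertices, so $|V(H)| + \kappa(H) = k+2$, forcing $k = 2i-2$; in particular $k$ is even, so $H$ is odd as required. By Corollary~\ref{cor:b_numbers} applied to the odd star $K_{1,k}$ (so $k+1$ is odd), $|b(K_{1,k})| = A_{k}$, which for $k = 2i-2$ equals $A_{2i-2}$. The number of ways to choose the $2i-2$ leaves from $\{1,\ldots,n\}$ is $\binom{n}{2i-2}$, contributing $\binom{n}{2i-2}A_{2i-2}$. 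Summing the two contributions yields the claimed formula, with the convention $A_{-2} = 0$ handling the degenerate case $i = 0$ (and noting that $A_0 = 1$ gives the correct value at $i = 1$).

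There is no substantive obstacle here; the only care needed is bookkeeping to make sure the single-vertex subgraph $\{0\}$ is assigned to the ``center-containing'' family with $k=0$ and not double-counted, and that the empty subgraph is assigned to the ``no-center'' family with $k=0$. Both small cases are easily verified to agree with the stated formula.
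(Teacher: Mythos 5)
Your proposal is correct and follows essentially the same route as the paper: classify the induced subgraphs of $K_{1,n}$ into edgeless subgraphs of the leaves (each with $|b|=1$, contributing $\binom{n}{i}$) and stars containing the center (where the constraint $|V(H)|+\kappa(H)=2i$ forces $H\cong K_{1,2i-2}$ with $|b|=A_{2i-2}$, contributing $\binom{n}{2i-2}A_{2i-2}$), then apply Theorem~\ref{thm:main:typeI} and Corollary~\ref{cor:b_numbers}. Your extra care with the degenerate cases $i=0,1$ is a fine addition but not a difference in method.
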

\begin{proof}
    Note that by Corollary~\ref{cor:b_numbers}, $b(K_{1,2k}) = (-1)^{k+1}A_{2k}$.
    Each odd induced subgraph of $K_{1,n}$ is an edgeless graph induced from the leaves or a star $K_{1,2k}$ for $0\leq k\leq n/2$. Hence an odd induced subgraph $H$ such that $|V(H)|+\kappa(H)=2i$ is either the edgeless graph with $i$ isolated vertices or  a subgraph isomorphic to $K_{1,2i-2}$. Hence, the proof is done.
\end{proof}

\begin{remark}
    For a graph $G$ with $n$ vertices, one has $\beta^i(X^\R(\triangle_G)) = 0$ if $i > n/2$.
    For the graph cubeahedron $\square_G$,
    it can happen that $\beta^i(X^\R(\square_G)) > 0$ even though $i > \frac{n+1}2$ as in Proposition~\ref{prop:bofstar}.
\end{remark}

\section{Remarks}\label{sec:remark}
    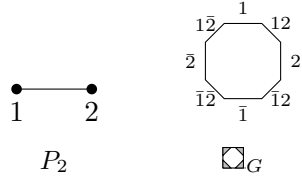
\begin{figure}[t]
    \centering
    \begin{subfigure}[b]{.14\textwidth}
    \centering
        \begin{tikzpicture}[scale=1]
            \fill (0,0) circle(2pt);
            \fill (1,0) circle(2pt);
            \draw (0,0)--(1,0);
            \draw (0,-0.3) node{$1$};
            \draw (1,-0.3) node{$2$};
        \end{tikzpicture}
        \caption*{$P_2$}
    \end{subfigure}
    \begin{subfigure}[b]{.14\textwidth}
    \centering
        \begin{tikzpicture}[scale=1]
            \draw (0,0.25)--(0.25,0)--(0.75,0)--(1,0.25)--(1,0.75)--(0.75,1)--(0.25,1)--(0,0.75)--cycle;
            \draw (0.5,-0.2) node{\tiny$\bar1$};
            \draw (0.5,1.2) node{\tiny$1$};
            \draw (-0.2,0.5) node{\tiny$\bar2$};
            \draw (1.2,0.5) node{\tiny$2$};
            \draw (1,1) node{\tiny$12$};
            \draw (0,0) node{\tiny$\bar1 \bar2$};
            \draw (1,0) node{\tiny$\bar1 2$};
            \draw (0,1) node{\tiny$1\bar2$};
        \end{tikzpicture}
        \caption*{$\CIII_G$}
    \end{subfigure}\caption{A graph associahedron of type B}\label{fig:typeB}
    \end{figure}

 A graph cubeahedron is obtained from the standard cube $\square^n$ by truncating the faces labeled by $I$ for each $I\in\mathcal{I}_G$
 in increasing order of dimensions. We introduce a slightly different polytope, which is also made from $\square^n$ by truncating the faces.
Given a graph $G$, we denote by  $\CIII_G$  a polytope by truncating the faces labeled $\widetilde{I}$ (simultaneously)
 for each $I\in\mathcal{I}_G$
 in increasing order of dimensions, where $i$ or $\overline{i}$ belongs to $\widetilde{I}$ if and only if $i\in I$.
Note that for the complete graph $K_n$, $\CIII_{K_n}$ is known as the type $B$ permutohedron.
So we call this polytope $\CIII_G$ a \emph{graph associahedron of type $B$}. Note that $\CIII_G$ is also  a Delzant polytope.
For example,
    consider a complete graph $K_2$ with two vertices, and then the following Figure~\ref{fig:typeB} shows a graph associahedron of type $B$.

The authors already checked that all statements of this paper corresponding to $\CIII_G$ are well-established except shellability. More precisely, we can apply Theorem~\ref{formula} and Lemma~\ref{lem:reduced:subgraph} to the case of $\CIII_G$. Then the problem of finding the Betti numbers of a real toric manifold associated with $\CIII_G$ is  converted to studying a topology of the order complex of some special poset $\mathcal{Q}_G^{\mathrm{even}}$, which is  defined by
$$ \mathcal{Q}_G^{\mathrm{even}} = \{ I\subset [n]\cup[\bar{n}] : I^+\cap I^-=\emptyset,\, G[I^+\cup I^-] \text{ is even}\} \cup \{\hat{0},\hat{1}\},$$
where
\[I^+ =I\cap[n] \quad \text{ and }\quad
I^-=\{ i\in[n] \mid \bar{i} \in I \}.\]
For a complete graph $G$, the Type $B$ permutohedron  $\CIII_G$ is already studied in \cite{CPP_B} and it was shown that $\mathcal{Q}_G^{\mathrm{even}}$ is shellable.
    \begin{figure}[t]
    \centering
    \begin{subfigure}[b]{.2\textwidth}
    \centering
        \begin{tikzpicture}[scale=1]
            \fill (0,0) circle(2pt);
            \fill (1,0) circle(2pt);
            \fill (-1,0) circle(2pt);
          \fill (0,1) circle(2pt);          \fill (0,-1) circle(2pt);
            \draw (0,0)--(1,0);    \draw (0,0)--(-1,0);
            \draw (0,0)--(0,1);      \draw (0,0)--(0,-1);
            \draw (0.1,-0.2) node{\tiny$1$};
           \draw (-1,-0.3) node{\tiny$4$};
           \draw (0,1.2) node{\tiny$3$};
             \draw (1,-0.3) node{\tiny$2$};        \draw (0.2,-1) node{\tiny$5$};
                      \draw (0,-2) node{\small$G=K_{1,4}$};
        \end{tikzpicture}
    \end{subfigure}
    \begin{subfigure}[b]{.6\textwidth}
    \centering
    	\begin{tikzpicture}[scale=.6]
            \node [draw] (0) at (0,-1) {\tiny $\emptyset$};
       \node [draw] (1234) at (-7.5,4) {\!\!\tiny$1234$\!\!};
       \node [draw] (1b234) at (-6,4) {\!\!\tiny$1\bar{2}34$\!\!};
       \node [draw] (12b34) at (-4.5,4) {\!\!\tiny$12\bar{3}4$\!\!};
       \fill (-3,4) circle(0.8pt) (-2.6,4) circle(0.8pt)(-2.2,4) circle(0.8pt);
       \node [draw] (1b2b3b4) at (-1,4) {\!\!\tiny$1\bar{3}\bar{4}\bar{5}$\!\!};
              \node [draw] (b1234) at (1,4) {\!\!\tiny$\bar{1}234$\!\!};
       \node [draw] (b1b234) at (2.5,4) {\!\!\tiny$\bar{1}\bar{2}34$\!\!};
       \node [draw] (b12b34) at (4,4) {\!\!\tiny$\bar{1}2\bar{3}4$\!\!};
       \fill (5.5,4) circle(0.8pt) (5.9,4) circle(0.8pt)(6.3,4) circle(0.8pt);
       \node [draw] (b1b2b3b4) at (7.5,4) {\!\!\tiny$\bar{1}\bar{3}\bar{4}\bar{5}$\!\!};
 \node [draw] (12) at (-8,2) {\!\!\tiny$12$\!\!};
        	\node [draw] (13) at (-7,2) {\!\!\tiny$13$\!\!};
        	\node [draw] (14) at (-6,2) {\!\!\tiny$14$\!\!};
        	\node [draw] (15) at (-5,2) {\!\!\tiny$15$\!\!};
            \node [draw] (1b2) at (-4,2) {\!\!\tiny$1\bar{2}$\!\!};
        	\node [draw] (1b3) at (-3,2) {\!\!\tiny$1\bar{3}$\!\!};
        	\node [draw] (1b4) at (-2,2) {\!\!\tiny$1\bar{4}$\!\!};
        	\node [draw] (1b5) at (-1,2) {\!\!\tiny$1\bar{5}$\!\!};
            \node [draw] (b12) at (1,2) {\!\!\tiny$\bar{1}2$\!\!};
        	\node [draw] (b13) at (2,2) {\!\!\tiny$\bar{1}3$\!\!};
        	\node [draw] (b14) at (3,2) {\!\!\tiny$\bar{1}4$\!\!};
        	\node [draw] (b15) at (4,2) {\!\!\tiny$\bar{1}5$\!\!};
          \node [draw] (b1b2) at (5,2) {\!\!\tiny$\bar{1}\bar{2}$\!\!};
        	\node [draw] (b1b3) at (6,2) {\!\!\tiny$\bar{1}\bar{3}$\!\!};
        	\node [draw] (b1b4) at (7,2) {\!\!\tiny$\bar{1}\bar{4}$\!\!};
            	\node [draw] (b1b5) at (8,2) {\!\!\tiny$\bar{1}\bar{5}$\!\!};
            \node [draw] (G) at (0,7) {\tiny $[n]\cup[\bar{n}]$};
  \path (0) edge (13) edge (12) edge (14)  edge (15) edge (b13) edge (b12) edge (b14) edge (b15) edge (1b3) edge (1b2) edge (1b4) edge (1b5) edge (b1b3) edge (b1b2) edge (b1b4) edge (b1b5);
  \path[dotted,thick] (0,0) edge (0,6);
   \path (G) edge (1234) edge (1b234) edge (12b34) edge (1b2b3b4) edge (b1234) edge (b1b234) edge (b12b34) edge (b1b2b3b4);	
   \path (1234) edge (12) edge (13) edge (14);
   \path (1b234) edge (1b2) edge (13) edge (14);
   \path (12b34) edge (12) edge (1b3) edge (14);
   \path (1b2b3b4) edge (1b3) edge (1b4) edge (1b5);
      \path (b1234) edge (b12) edge (b13) edge (b14);
   \path (b1b234) edge (b1b2) edge (b13) edge (b14);
   \path (b12b34) edge (b12) edge (b1b3) edge (b14);
   \path (b1b2b3b4) edge (b1b3) edge (b1b4) edge (b1b5);
    	\end{tikzpicture}
        \caption*{$\mathcal{Q}_G^{\mathrm{even}}$}
    \end{subfigure}\caption{When $G=K_{1,4}$, $\mathcal{Q}_G^{\mathrm{even}}$ is not shellable. }\label{fig:remark}
    \end{figure}
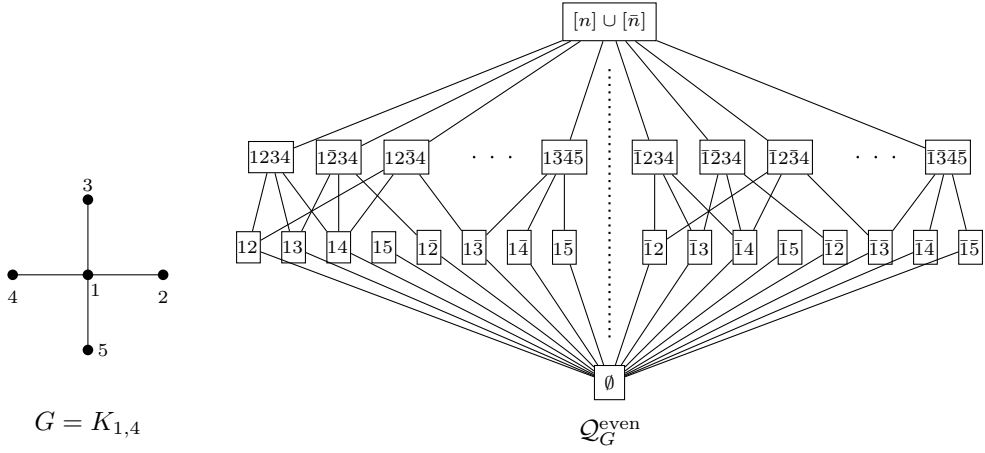
However, $\mathcal{Q}_G^{\mathrm{even}}$ is not always shellable, and so it is not easy to compute the Betti numbers of the real toric manifold associated with $\CIII_G$. Consider a star $K_{1,n}$  $(n\ge 4)$ as in Figure~\ref{fig:remark}. Then for any two elements $I$ and $J$ of $\mathcal{Q}_G^{\mathrm{even}}$, if $1\in I$ and $\bar{1}\in J$, then any maximal chain containing $I$ and any maximal chain containing $J$ do not intersect. Thus $\mathcal{Q}_G^{\mathrm{even}}$ cannot be shellable.

\section*{Acknowledgement}
    The authors thank to Prof. Vincent Pilaud and Dr. Thibault Manneville for their kind answer to the questions for the compatibility fans. The authors also thank to Yusuke Suyama for his valuable comment.

\end{document}